\newtheorem{theorem}{Theorem}[section]
\newtheorem*{theorem*}{Theorem}
\newtheorem{corollary}[theorem]{Corollary}
\newtheorem{lemma}[theorem]{Lemma}
\newtheorem*{lemma*}{Lemma}
\newtheorem{proposition}[theorem]{Proposition}
\newtheorem*{proposition*}{Proposition}
\theoremstyle{definition} 
\newtheorem{definition}[theorem]{Definition} 
\newtheorem*{definition*}{Definition} 
\newtheorem{example}[theorem]{Example}
\theoremstyle{remark}
\newtheorem{remark}[theorem]{Remark}
\newcommand{\bi}{\begin{itemize}}
\newcommand{\ei}{\end{itemize}}
\newcommand{\bd}{\begin{description}}
\newcommand{\ed}{\end{description}}
\newcommand{\bqn}{\begin{eqnarray}}
\newcommand{\eqn}{\end{eqnarray}}
\newcommand{\lam}{\lambda}
\newcommand{\g}{\gamma}
\newcommand{\al}{\alpha}
\newcommand{\eps}{\varepsilon}
\newcommand{\R}{\mathbb{R}}
\newcommand{\N}{\mathbb{N}}
\newcommand{\mc}[1]{\mathcal{ #1 }}
\newcommand{\all}{\forall\,}
\newcommand{\la}{\langle}
\newcommand{\ra}{\rangle}
\newcommand{\tx}[1]{\mathrm{#1}}
\newcommand{\wt}[1]{\widetilde{#1}}
\newcommand{\metrp}{g}			
\DeclareMathOperator{\Riccan}{\mathfrak{Ric}}	
\newcommand{\distr}{\mathscr{D}}
\newcommand{\metr}[2]{g(#1,#2)}
\newcommand{\EXP}{\mc{E}}
\newcommand{\Exp}{\mc{E}}
\newcommand{\cc}{c}
\newcommand{\Ric}{\tx{Ric}}
\newcommand{\QQ}{\mc{Q}}
\newcommand{\Qz}{\mc{I}}
\newcommand{\ve}{\mathcal{V}}
\newcommand{\f}{\mathfrak{f}}
\newcommand{\id}{\mathbb{I}} 
\newcommand{\tanf}{\mathsf{T}} 
\newcommand{\dist}{\mathsf{d}}
\newcommand{\Tor}{T}
\newcommand{\Rcan}{\mathfrak{R}} 
\newcommand{\Riem}{\overline{\mathrm{R}}}
\DeclareMathOperator{\rank}{\tx{rank}}
\DeclareMathOperator{\spn}{\tx{span}}
\DeclareMathOperator{\spec}{\tx{spec}}
\DeclareMathOperator{\trace}{\tx{tr}}
\author{Andrei Agrachev$^1$}
\address{$^1$SISSA, Italy, MI RAS and IM SB RAS, Russia}
\email{agrachev@sissa.it}
\author{Davide Barilari$^2$}
\address{$^2$Institut de Math\'ematiques de Jussieu-Paris Rive Gauche, UMR CNRS 7586 - Universit\'e Paris Diderot - Paris 7, Paris, France}
\email{davide.barilari@imj-prg.fr}
\author{Luca Rizzi$^3$}
\address{$^3$CMAP \'Ecole Polytechnique and \'Equipe INRIA GECO Saclay \^Ile-de-France, Paris, France}
\email{luca.rizzi@cmap.polytechnique.fr}
\subjclass[2010]{53C17, 53C21, 53C22, 49N10}
\date{\today}
\title{Sub-Riemannian curvature in contact geometry}
\begin{document}
	
\begin{abstract}
We compare different notions of curvature on contact sub-Riemannian manifolds.
In particular we introduce canonical curvatures as the coefficients of the sub-Riemannian Jacobi equation. The main result is that all these coefficients are encoded in the asymptotic expansion of the horizontal derivatives of the sub-Riemannian distance. We explicitly compute their expressions in terms of the standard tensors of contact geometry. As an application of these results, we obtain a sub-Riemannian version of the Bonnet-Myers theorem that applies to any contact manifold.
\end{abstract}

\maketitle

\tableofcontents

\section{Introduction}

The definition of general curvature-like invariants in sub-Riemannian geometry is a challenging and interesting topic, with many applications to the analysis, topology and geometry of these structures. In the general setting, there is no canonical connection \`a la Levi-Civita and thus the classical construction of the Riemann curvature tensor is not available. Nevertheless, in both the Riemannian and sub-Riemannian setting, the geodesic flow is a well defined Hamiltonian flow on the cotangent bundle: one can then generalize the classical construction of Jacobi fields and define the curvature as the invariants appearing in the Jacobi equation (i.e.\ invariants of the linearization of the geodesic flow).
This approach has been extensively developed in \cite{agrafeedback,geometryjacobi1,lizel,lizel2}. This method, which leads to direct applications, has still some shortcomings since, even if these invariants could be a priori computed via an algorithm, it is extremely difficult to implement.

Another natural approach is to extract geometric invariants from the horizontal derivatives of the sub-Riemannian (squared) distance. 
This approach was developed in \cite{curvature}, where the authors introduce a family of symmetric operators canonically associated with a minimizing trajectory. Under some assumptions, this family admits an asymptotic expansion and each term of this expansion defines a metric invariant. 

The goal of this paper is to revisit  both constructions for contact manifolds, and to  establish a bridge between the two approaches to curvature. The main result is how all invariants of the linearization of the geodesic flow are encoded in the asymptotic expansion  of the squared distance. We explicitly compute these invariants in terms of the standard tensors of contact geometry (Tanno curvature, torsion and Tanno's tensor). Combining these formulas with the results of \cite{BR-comparison}, we obtain a general Bonnet-Myers type result valid for every contact manifold.

\subsection{The contact setting}

Contact manifolds are an important sub-class of corank $1$ sub-Riemannian structures that includes  Yang-Mills type structures, 
Sasakian manifolds, (strongly pseudo-convex) CR structures, and the Heisenberg group.

More precisely, let $M$ be a smooth manifold, with $\dim M = 2d+1$ and $\omega \in \Lambda^{1}M$ be a one-form such that $\omega \wedge (d\omega)^{d}\neq 0$. The \emph{contact distribution} is $\distr:=\ker \omega$. A sub-Riemannian structure on $M$ is given by a smooth scalar product $g$ on $\distr$. In this case we say that $(M,\omega,g)$ is a \emph{contact sub-Riemannian manifold}. This scalar product can be extended to the whole tangent bundle by requiring that the \emph{Reeb vector field} $X_{0}$ is orthogonal to $\distr$ and of norm one. 
 The \emph{contact endomorphism} $J:TM\to TM$ is defined by:
\begin{equation}
g(X,JY)=d\omega(X,Y),\qquad \all X,Y\in \Gamma(TM). 
\end{equation}
By a classical result (see \cite[Thm. 4.4]{blair}) there always exists a choice of the metric $g$ on $\distr$ such that $J^{2}|_{\distr}=-\mathbb{I}$. 
The \emph{Tanno's tensor} is the $(2,1)$ tensor field
\begin{equation}
Q(X,Y):=(\nabla_Y J)X,\qquad \all X,Y \in \Gamma(TM),
\end{equation}
where $\nabla$ is the Tanno connection of the contact manifold (see Sec.~\ref{s:contact} for precise definitions).

\emph{Horizontal curves} (also called \emph{Legendrian} in this setting), are curves $\gamma$ such that $\dot{\gamma}(t) \in \distr_{\gamma(t)}$. The length $\ell(\gamma)$ of an horizontal curve is well defined, and the \emph{sub-Riemannian distance} is
\begin{equation}
\dist(x,y) = \inf\{\ell(\gamma)\mid \gamma \text{ horizontal curve that joins $x$ with $y$}\}.
\end{equation}
This turns a contact sub-Riemannian manifold in a metric space. Geodesics are horizontal curves such that sufficiently short segments realize the distance between their endpoints. They are projections of the integral lines of the geodesic flow on $T^*M$ defined by the Hamiltonian function associated with the (sub-)Riemannian structure. In particular any geodesic is uniquely specified by its \emph{initial covector} $\lambda \in T^*M$. In the Riemannian setting, this is nothing else than the geodesic flow seen on the cotangent space through the canonical isomorphism and the initial covector corresponds to the initial tangent vector of a geodesic. In the sub-Riemannian setting only the ``dual viewpoint'' survives.

\subsection{A family of operators}
For a fixed geodesic $\gamma(t)$, the \emph{geodesic cost function} is
\begin{equation}
c_t(x) :=-\frac{1}{2t}\dist^2(x,\gamma(t)), \qquad x \in M, \,t >0.
\end{equation}
This function is smooth (as a function of $t$ and $x$) for small $t >0$ and $x$ sufficiently close to $x_0=\gamma(0)$. Moreover, its differential recovers the initial covector of the geodesic $\lambda = d_{x_0} c_t$, for all $t>0$. As a consequence, the family of functions $\dot{c}_t:=\frac{d}{dt}c_t$ has a critical point at $x_0$ and its second differential is a well defined quadratic form 
\begin{equation}
d_{x_0}^2 \dot{c}_t : T_{x_0} M \to \R.
\end{equation}
\begin{remark}
The definition of $\dot{c}_t$ makes sense both in the Riemannian and sub-Riemannian setting. In both cases, for any unit-speed geodesic
\begin{equation}
\dot{c}_t(x)=\frac{1}{2} \| \dot\gamma(t) - W^t_{x,\gamma(t)}  \|^2 - \frac{1}{2},
\end{equation}
where $W_{x,\gamma(t)}^t$ is final tangent vector of the geodesic joining $x$ with $\gamma(t)$ in time $t$ (this is uniquely defined precisely where $c_t$ is smooth). See Sec.~\ref{s:interpr}.
\end{remark}
Thanks to the sub-Riemannian metric, the restrictions $d_{x_0}^2 \dot{c}_t|_{\distr_{x_0}}$ define a family of symmetric operators $\QQ_\lam(t):\distr_{x_0} \to \distr_{x_0}$, by the formula
\begin{equation}
d^{2}_{x_0}\dot c_{t}(w)=\metr{ \QQ_\lam(t)w}{w},\qquad   \all t>0, \quad \all w\in \distr_{x_0}.
\end{equation}
The family $\QQ_\lam(t)$ is singular for $t \to 0$, but in this setting we have the following (Theorem~\ref{t:main}).
\begin{theorem} 
The family $t\mapsto t^{2}\QQ_\lam(t)$  can be extended to a smooth family of operators on $\distr_{x_0}$ for small $t\geq 0$, symmetric with respect 	to $g$. Moreover $\Qz_{\lam}:=\displaystyle \lim_{t\to 0^+}t^{2}\QQ_\lam(t) \geq \id > 0.$
\end{theorem}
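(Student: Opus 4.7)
\emph{Plan.} My starting point would be the identity stated in the Remark,
\[ \dot c_t(x) = \tfrac{1}{2}\|\dot\gamma(t) - W^t_{x,\gamma(t)}\|^2 - \tfrac{1}{2}, \]
valid on the smoothness locus of $c_t$. Since $W^t_{x_0,\gamma(t)}=\dot\gamma(t)$, differentiating twice at $x_0$ kills the linear term and yields
\[ d^{2}_{x_0}\dot c_{t}(w) = \metr{A(t)w}{A(t)w}, \qquad A(t)w := \frac{\partial}{\partial x}\Big|_{x_0} W^t_{x,\gamma(t)}\cdot w. \]
Consequently $\QQ_\lam(t) = (A(t)|_{\distr_{x_0}})^{\ast}A(t)|_{\distr_{x_0}}$, where $\ast$ is the $\metrp$-adjoint. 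This presentation automatically produces $\metrp$-symmetry and non-negativity of $\QQ_\lam(t)$, and reduces the whole statement to an analysis of $A(t)$ as $t\to 0^{+}$.

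I would then lift the computation to the cotangent bundle. With $H:T^{\ast}M\to\R$ the sub-Riemannian Hamiltonian and $e^{t\vec H}$ its flow, for $x$ close to $x_0$ there is a unique covector $\mu(t,x)\in T^{\ast}_xM$ with $\pi(e^{t\vec H}\mu(t,x))=\gamma(t)$, and $W^t_{x,\gamma(t)}=\pi_{\ast}\vec H(e^{t\vec H}\mu(t,x))$. Differentiating in $x$ at $x_0$ expresses $A(t)$ through the symplectic flow, and its singular behavior at $t=0$ is controlled by the degeneracy of $\pi_{\ast}$ along the Jacobi curve $t\mapsto e^{-t\vec H}_{\ast}(\ve_{\lam(t)})$ in the Lagrange Grassmannian of $T_\lam(T^{\ast}M)$, where $\ve$ is the vertical distribution.

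The contact assumption now provides the crucial input: for any non-trivial geodesic in a contact $(2d+1)$-manifold the Young diagram of the Jacobi curve consists of $2d-1$ columns of height $1$ plus a single column of height $2$, the latter accounting for the missing Reeb direction. In a Darboux frame along $\lam(t)$ adapted to the splitting $\distr\oplus \R X_0$, a Taylor expansion then shows that $A(t)|_{\distr_{x_0}}$ has at most a simple pole at $t=0$, whence $t^{2}\QQ_\lam(t)$ extends smoothly and $\metrp$-symmetrically to $t\geq 0$. For the bound $\Qz_\lam\geq \id$ I would read off the leading coefficient of this expansion: the $2d-1$ height-one columns contribute exactly $\id$ on their block (as in the flat Euclidean computation, where $c_t(x)=-\tfrac{1}{2t}|x|^2+\la x,\dot\gamma(0)\ra-\tfrac{t}{2}|\dot\gamma(0)|^2$ already yields $t^{2}\QQ=\id$), while the height-two column contributes an additional non-negative rank-one term forced by the $d\omega$-pairing with the Reeb field.

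The main obstacle is the explicit control of $A(t)$ in an adapted frame. Concretely, one must (i) fix a Darboux frame along $\lam(t)$ that singles out the direction of the geodesic and the direction $d\omega$-paired to the Reeb, (ii) expand the Hamiltonian flow to sufficient order in $t$ to read off the $t^{-1}$ and $t^{0}$ coefficients of $A(t)$, and (iii) verify that the singular part lives in the predicted one-dimensional subspace of $\distr_{x_0}$ and adds non-negatively to the identity block. Once this normal form is established, both the smoothness of $t^{2}\QQ_\lam(t)$ and the lower bound $\Qz_\lam\geq \id$ follow by a direct inspection of the leading coefficients.
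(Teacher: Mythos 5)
Your reduction $d^{2}_{x_0}\dot c_{t}(w)=\|A(t)w\|^{2}$ with $A(t)=d_{x_0}\bigl(x\mapsto W^{t}_{x,\gamma(t)}\bigr)$ is correct (the cross term vanishes because $W^{t}_{x_0,\gamma(t)}=\dot\gamma(t)$), and the factorization $\QQ_\lam(t)=(A(t)|_{\distr_{x_0}})^{*}A(t)|_{\distr_{x_0}}$ is a genuinely clean way to obtain $g$-symmetry and $\QQ_\lam(t)\geq 0$ without computation; the paper instead derives symmetry from the identity $\QQ_\lam(t)=\frac{d}{dt}[S(t)^{-1}]_{\square}$ with $S(t)=A(t)^{-1}B(t)$ coming from the canonical Darboux frame. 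Your structural input (the Jacobi curve has $2d-1$ ``height-one'' directions and one ``height-two'' direction) is exactly the content of the matrices $C_1,C_2$ in the paper's Proposition~\ref{p:canonical}, so the two routes are close in spirit.

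The gap is that the two quantitative claims that constitute the theorem are asserted rather than derived. First, the statement that $A(t)|_{\distr_{x_0}}$ has ``at most a simple pole'' does not follow from the Young diagram alone: in the paper's notation the matrix $B(t)$ is singular to order $3$ at $t=0$ (the height-two column produces a $t^{3}/6$ leading term in the Reeb-paired direction), so $B(t)^{-1}$ has a third-order pole, and one must verify by explicit expansion of the flow (the Cauchy problem~\eqref{eq:Cauchyp}, solved to several orders) that this worse singularity does not contaminate the restriction to the distribution. This cancellation is exactly what the computation of $[S(t)^{-1}]_{\square}$ establishes, and nothing in your argument replaces it. Second, your factorization only yields $\Qz_\lam\geq 0$; the bound $\Qz_\lam\geq\id$ requires knowing that the leading coefficient of $tA(t)|_{\distr_{x_0}}$ has all singular values $\geq 1$. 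Your appeal to the flat model justifies the identity block heuristically, but the claim that the height-two column adds a \emph{non-negative} correction on top of $\id$ (rather than, say, perturbing the height-one block downward) is precisely the content of the leading term $\mathrm{diag}(4,\id_{2d-1})$ in Eq.~\eqref{eq:principal}, which only emerges from the order-by-order inversion of $B(t)$. You have correctly localized where the work lies, but as written the proof of the extension's smoothness and of the lower bound $\Qz_\lam\geq\id$ is missing.
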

In particular, we have the Laurent expansion at $t=0$
\begin{equation}\label{eq:mainexp-intro}
\QQ_\lam(t)= \frac{1}{t^{2}}\Qz_{\lam}+ \sum_{i=0}^{m}\QQ^{(i)}_\lam t^{i}+O(t^{m+1}).
\end{equation}
Every operator $\QQ^{(i)}_\lambda  : \distr_{x_0} \to \distr_{x_0}$, for $i \in \N$, is an invariant of the metric structure and, in this sense, $\QQ_\lam(t)$ can be thought of as a \emph{generating function} for metric invariants. These operators, together with $\Qz_\lam$, contain all the information on the germ of the structure along the fixed geodesic (clearly any modification of the structure that coincides with the given one on a neighborhood of the geodesic gives the same family of operators).
\begin{remark} 
Applying this construction in the case of a Riemannian manifold,  where $\distr_{x_0} = T_{x_0}M$, one finds that $\Qz_\lam = \mathbb{I}$ for any geodesic, and the operator $\QQ^{(0)}_\lam$ is the ``directional'' sectional curvature in the direction of the geodesic:
\begin{equation}
\QQ^{(0)}_\lam=\frac13 \Riem(\cdot,\dot\gamma)\dot\gamma,
\end{equation}
where $\dot\gamma$ is the initial vector of the geodesic and $\Riem$ is the curvature of the Levi-Civita connection.
\end{remark}

A similar construction has been carried out, in full generality, for the geometric structures arising from affine optimal control problems with Tonelli-type Lagrangian (see \cite{curvature}). In that setting the geodesic is replaced by a minimizer of the optimization problem. Under generic assumptions on the extremal, the singularity of the family $\QQ_\lam(t)$ is controlled. 

In the contact setting, due to the absence of abnormal minimizers, these assumptions are always satisfied for any non-trivial geodesic. One of the purposes of this paper is to provide a simpler and direct proof of the existence of the asymptotic.

\subsection{The singular term}
A first surprise is that, already in the contact case, $\Qz_\lam$ is a non-trivial invariant. We obtain the complete characterization of the singular term of~\eqref{eq:mainexp-intro} (Theorem~\ref{t:main2}).
\begin{theorem} 
The symmetric operator $\Qz_{\lam}: \distr_{x_{0}}\to \distr_{x_{0}}$ satisfies
\begin{itemize}
\item[(i)] $\spec\Qz_{\lam}=\{1,4\}$,
\item[(ii)] $\trace\Qz_{\lam}=2d+3$.  
\end{itemize}
More precisely, let $K_{\gamma(0)} \subset \distr_{x_0}$ be the hyperplane $d\omega$-orthogonal to $\dot\gamma(0)$, that is 
\begin{equation}
K_{\gamma(0)}:=\{v \in \distr_{\gamma(0)}\mid d\omega(v,\dot\gamma(0)) = 0\}.
\end{equation}
Then $K_{\gamma(0)}$ is the eigenspace corresponding to eigenvalue $1$ (and geometric multiplicity $2d-1$) and $K_{\gamma(0)}^\perp\cap \distr_{x_0}$ is the eigenspace corresponding to eigenvalue $4$ (and geometric multiplicity $1$).
\end{theorem}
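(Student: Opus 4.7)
My plan is to reduce the computation of $\Qz_\lam$ to an explicit calculation on the Heisenberg nilpotent approximation, and then to match the resulting spectral data with the intrinsic contact data $(d\omega, J)$ at $x_0$.

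\textbf{Step 1 (reduction to the nilpotent approximation).} Since $\Qz_\lam$ is the coefficient of $1/t^2$ in the Laurent expansion~\eqref{eq:mainexp-intro}, it is a first-order invariant depending only on the principal part of the sub-Riemannian structure at $x_0$ along $\gamma$. For a contact manifold of dimension $2d+1$, this nilpotent approximation is the Heisenberg group with its flat left-invariant contact sub-Riemannian metric. A dilation argument should show that the rescaling $t^2\QQ_\lam(t)$ is, to leading order in $t$, invariant under the Heisenberg dilations, so that $\Qz_\lam$ is insensitive to higher-order corrections of the structure. Hence it suffices to prove the spectral statement on the Heisenberg group of dimension $2d+1$.

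\textbf{Step 2 (explicit Heisenberg computation).} In the Heisenberg group the geodesic flow is integrable; any non-trivial geodesic with vanishing vertical covector component is a straight horizontal line, and by left-invariance one may assume $x_0=0$ and $\dot\gamma(0)=e_1$ for an orthonormal basis $\{e_1,\dots,e_{2d}\}$ of $\distr_{x_0}$ chosen so that $Je_1=e_2$. The squared sub-Riemannian distance $\dist^2(x,\gamma(t))$ admits an explicit expansion in $x$ and $t$. Computing the Hessian of $\dot c_t(x)=-\tfrac{1}{2t}\dist^2(x,\gamma(t))$ at $x_0$, restricting to $\distr_{x_0}$, and passing to the limit $t\to 0^+$, one finds that $\Qz_\lam$ is diagonal in $\{e_1,\dots,e_{2d}\}$ with eigenvalues $1,4,1,\dots,1$. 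Heuristically the factor $4$ on $e_2=J\dot\gamma(0)$ reflects the non-integrability of $\distr$: a displacement in the direction $J\dot\gamma(0)$ must be ``paid back'' by an extra vertical component of the connecting geodesic in order to reach $\gamma(t)$, which enhances the quadratic cost.

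\textbf{Step 3 (intrinsic identification and main obstacle).} The eigenvalue $4$ in Step 2 is attached to the direction $e_2=J\dot\gamma(0)$. From the defining identity $g(X,JY)=d\omega(X,Y)$, one reads $K_{\gamma(0)}=\{v\in\distr_{x_0}\mid g(v,J\dot\gamma(0))=0\}$, so that $K_{\gamma(0)}^\perp\cap\distr_{x_0}=\spn(J\dot\gamma(0))$. This identification is basis-independent and yields the stated description of the eigenspaces with multiplicities $2d-1$ and $1$, together with $\trace \Qz_\lam=(2d-1)+4=2d+3$. The technical heart of the argument is Step 1: rigorously justifying that only the nilpotent approximation contributes to $\Qz_\lam$. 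While the dilation heuristic is intuitive, making it precise requires either a controlled perturbative expansion of the Jacobi curve along $\lam$ in the spirit of~\cite{curvature}, or a careful comparison of the Hamiltonian flows of the original structure and its Heisenberg model under a $t$-dependent rescaling. Once Step 1 is granted, Steps 2 and 3 are essentially computational and rest only on the contact identity $g(X,JY)=d\omega(X,Y)$.
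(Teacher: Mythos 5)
Your strategy (blow up to the Heisenberg nilpotent approximation, compute there, identify the eigenspaces via $d\omega(v,\dot\gamma)=g(v,J\dot\gamma)$) is genuinely different from the paper's, and Step 3 is fine as far as it goes. But the proposal has a real gap exactly where you locate it: Step 1 is not an omitted routine verification, it is the entire content of the theorem, and the dilation heuristic does not obviously close. The quantity $\Qz_\lam=\lim_{t\to0^+}t^2\,d^2_{x_0}\dot c_t|_{\distr_{x_0}}$ involves second derivatives at $x_0$ of $\dist^2(\cdot,\gamma(t))$ where the ``center'' $\gamma(t)$ collapses onto the base point as $t\to 0$; to transfer the computation to the Heisenberg model you would need $C^2$-convergence of the rescaled squared distances in a regime where the two arguments stay at comparable, shrinking distance, with the dilations centered at a moving point, together with uniform non-degeneracy (no conjugate points appearing in the limit). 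None of this is supplied, and the standard convergence results for nilpotent approximations (uniform on compacta away from the diagonal) are not strong enough. Note also that even granting Step 1, the reduction to a \emph{straight} Heisenberg geodesic (vanishing vertical covector) is itself a claim that needs the scaling $h_0\mapsto \eps h_0$ under blow-up to be made precise, and the explicit Heisenberg Hessian computation in Step 2 is asserted rather than performed.

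For comparison, the paper never blows up: it writes $\QQ_\lam(t)=\frac{d}{dt}[S(t)^{-1}]_\square$ with $S(t)=A(t)^{-1}B(t)$, where $A,B$ solve the Cauchy problem~\eqref{eq:Cauchyp} attached to the canonical Darboux frame of Proposition~\ref{p:canonical}. The universality you are trying to capture with the nilpotent approximation is then manifest for free: the singular part of the Laurent expansion of $S(t)^{-1}$ is governed solely by the constant matrices $C_1,C_2$ (the curvature $R(t)$ only enters at order $t^{-1}$ in the $aa$-block and at order $1$ on the distribution), which forces $\Qz_\lam=\mathrm{diag}(4,\mathbb{I}_{2d-1})$ in the frame $f_1,\dots,f_{2d}$; the identification $f_1=-J\dot\gamma$ and $\spn\{f_2,\dots,f_{2d}\}=J\dot\gamma^\perp\cap\distr$ then comes from the explicit construction of the canonical frame (Theorem~\ref{t:cansplitting}), which is the analogue of your Step 3. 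If you want to salvage your route, the cleanest fix is to replace the metric blow-up by the perturbative expansion of the Jacobi curve you allude to, i.e.\ essentially to redo the paper's $A(t),B(t)$ analysis and observe that $R(t)$ does not contribute to the leading term.
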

$\Qz_\lam$ is a structural invariant that does not depend on the metric, but only on the fact that the distance function comes from a contact sub-Riemannian structure (operators $\Qz_\lam$ coming from different metrics $g$ on the same contact distribution have the same spectral invariants).
\begin{remark}
The trace $\mathcal{N} := \trace\Qz_\lam = 2d+3$, coincides with the \emph{geodesic dimension} (of the contact sub-Riemannian structure) defined in \cite[Sec. 5.6]{curvature} (see also \cite{R-MCP} for general metric measure spaces). If $\Omega_t$ is the geodesic homothety with center $x_0$ and ratio $t \in [0,1]$ of a measurable, bounded set $\Omega \subset M$ with positive measure, we have the following \emph{asymptotic measure contraction property}:
\[
\lim_{t \to 0} \frac{\log \mu(\Omega_t)}{\log t} = \mathcal{N},
\]
for any smooth measure $\mu$. i.e. $\mathcal{N}$ is the order of vanishing of $\mu(\Omega_t)$ for $t \to 0$.
\end{remark}
The regular terms of the asymptotic~\ref{eq:mainexp-intro} are \emph{curvature-like} invariants. The next main result is the explicit relation of the operators $\QQ^{(i)}_\lambda$ with the symplectic invariants of the linearization of the geodesic flow, that we introduce now.

\subsection{Linearized Hamiltonian flow}

In the (sub-)Riemannian setting, the geodesic flow $\phi_t :T^*M \to T^*M$ is generated by the Hamiltonian function $H \in C^\infty(T^*M)$ (the co-metric of the sub-Riemannian structure). More precisely, if $\sigma$ is the canonical symplectic structure on $T^*M$, then the Hamiltonian vector field $\vec{H}$ is defined by $\sigma(\cdot,\vec{H}) = dH$, and $\phi_t = e^{t\vec{H}}$. Integral lines $\lambda(t) = e^{t\vec{H}}(\lambda)$ of the geodesic flow are usually called \emph{extremals}. Geodesics are then projections of non-trivial extremals $\gamma(t) = e^{t\vec{H}}(\lambda)$ (non-trivial is equivalent to $H(\lambda)\neq 0$). For any fixed extremal, and  initial datum $\xi \in T_{\lambda}(T^*M)$, we define the vector field along the extremal
\begin{equation}
X_\xi(t) := e^{t\vec{H}}_*\xi \in T_{\lambda(t)}(T^*M).
\end{equation}
The set of these vector fields is a $2n$-dimensional vector space that coincides with the space of solutions of the \emph{(sub-)Riemannian Jacobi equation}
\begin{equation}
\dot{X} = 0,
\end{equation}
where $\dot{X}:=\mathcal{L}_{\vec{H}}X$ denotes the Lie derivative in the direction of $\vec{H}$. 

Pick a Darboux frame $\{E_i(t),F_i(t)\}_{i=1}^{n}$ along $\lambda(t)$ (to fix ideas, one can think at the canonical basis $\{\partial_{p_i}|_{\lambda(t)},\partial_{q_i}|_{\lambda(t)}\}$ induced by a choice of coordinates $(q_1,\ldots,q_n)$ on $M$). In terms of this frame, the Jacobi field $X(t)$ has components $(p(t),q(t)) \in \R^{2n}$:
\begin{equation}
X(t) = \sum_{i=1}^n p_{i}(t) E_{i}(t) + q_{i}(t) F_{i}(t).
\end{equation}
In the Riemannian case, one can choose a \emph{canonical Darboux frame} (satisfying special equations, related with parallel transport) such that the components $(p(t),q(t)) \in \R^{2n}$ satisfy
\begin{equation}\label{eq:jacobicoord-intro}
\dot{p} = - R(t) q, \qquad \dot{q} = p ,
\end{equation}
for some smooth family of symmetric matrices $R(t)$. It turns out this class of frames is defined up to a constant orthogonal transformation and $R(t)$ is the matrix representing the \emph{curvature operator} $\Rcan_{\gamma(t)} : T_{\gamma(t)}M \to T_{\gamma(t)}M$ in the direction of the geodesic, in terms of a parallel transported frame. In particular
\begin{equation}
\Rcan_{\gamma(t)}v = \Riem(v,\dot{\gamma}(t))\dot{\gamma}(t), \qquad \all v \in T_{\gamma(t)} M.
\end{equation}
where $\Riem$ is the Riemannian curvature tensor. From Eq.~\eqref{eq:jacobicoord-intro}, it follows the classical classical Jacobi equation written in terms of a parallel transported frame: $\ddot{q} + R(t) q = 0$. In this language, the Riemann curvature arises as a set of invariants of the linearization of the geodesic flow.

\subsubsection{Canonical curvatures}
Analogously, in the sub-Riemannian setting, we might look for a \emph{canonical Darboux frame} such that the  Jacobi has, in coordinates, the simplest possible form. This analysis begun in \cite{agrafeedback} and has been completed in a very general setting in \cite{lizel}.  The ``normal form'' of the Jacobi equation defines a series of invariants of the sub-Riemannian structure along the given geodesic:
\begin{itemize}
\item A \emph{canonical splitting} of the tangent space along the geodesic:
\begin{equation}
 T_{\gamma(t)} M =\bigoplus_{\alpha} S^\alpha_{\gamma(t)},
\end{equation}
where $\alpha$ runs over a set of indices that depends on the germ of the sub-Riemannian structure along the geodesic.
\item A \emph{canonical curvature operator}  $\Rcan_{\gamma(t)}: T_{\gamma(t)} M \to T_{\gamma(t)} M$.
\end{itemize}
The operator $\Rcan_{\gamma(t)}$ (and its partial traces) is the correct object to bound to obtain sectional-type (and Ricci-type) comparison theorems in the general sub-Riemannian setting, as it controls the evolution of the Jacobian of the exponential map (see for instance \cite{BR-comparison}).

The general formulation is complicated, since the very structure of the normal form depends on the type of sub-Riemannian structure. In Section~\ref{s:Jac} we give an ad-hoc presentation for the contact case. 
In particular, we prove the following (Theorem~\ref{t:cansplitting}).
\begin{theorem}[Canonical splitting]
Let $\gamma(t)$ be a unit speed geodesic of a contact sub-Rieman\-nian structure with initial covector $\lambda$. Then the canonical splitting is given by
\begin{align}
S^a_{\gamma(t)}& := \spn\{X_0 - 2Q(\dot\gamma,\dot\gamma)-h_0\dot\gamma\},  & \dim S^a_{\gamma(t)} & = 1, \\
S^b_{\gamma(t)}& := \spn\{J\dot\gamma \},   & \dim S^b_{\gamma(t)} & = 1, \\
S^c_{\gamma(t)}& := J\dot\gamma^\perp \cap \distr_{\gamma(t)},   & \dim S^c_{\gamma(t)} & = 2d-1,
\end{align}
where $X_0$ is the Reeb field, $Q$ is the Tanno tensor, $h_0 = \langle \lambda,X_0\rangle$ and everything is computed along the  extremal. Indeed $\distr_{\gamma(t)} = S^b_{\gamma(t)} \oplus S^c_{\gamma(t)}$ and $\dot\gamma(t) \in S^c_{\gamma(t)}$.
\end{theorem}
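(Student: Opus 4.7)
The plan is to specialize the canonical Darboux frame construction of \cite{lizel} to the contact setting and then read off the subspaces $S^\alpha_{\gamma(t)}$ as the $d\pi$-projections of specific elements of that frame along the extremal $\lambda(t)=e^{t\vec H}(\lambda)$, where $\pi\colon T^*M\to M$. The first task is to set up the Young diagram of a unit-speed contact geodesic. Writing $V_{\lambda(t)}=\ker d_{\lambda(t)}\pi$ for the vertical subspace, the flag $V\subset V+\mathcal{L}_{\vec H}V\subset T(T^*M)$ encodes its growth. Because a contact sub-Riemannian structure is step~$2$ and corank~$1$, exactly one vertical direction needs two Lie brackets with $\vec H$ to reach the horizontal side, namely the one paired with $J\dot\gamma$ via $d\omega$: indeed $d\omega(J\dot\gamma,\dot\gamma)=g(J\dot\gamma,J\dot\gamma)=1\ne 0$, whereas $d\omega(v,\dot\gamma)=g(v,J\dot\gamma)=0$ for $v\in J\dot\gamma^\perp\cap\distr_{\gamma(t)}$. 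This forces the Young diagram to have a single column of height $2$, supplying the blocks labelled $a$ and $b$, together with $2d-1$ columns of height $1$, supplying block $c$.

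Next, applying the normalization procedure of \cite{lizel} produces a canonical Darboux frame $\{E_i(t),F_i(t)\}$ along $\lambda(t)$, and by the general structural theorem each $S^\alpha_{\gamma(t)}$ is the $d\pi$-image of the span of the $F$-vectors sitting in block $\alpha$. For the $c$-block this image is precisely $J\dot\gamma^\perp\cap\distr_{\gamma(t)}$, and the inclusion $\dot\gamma(t)\in S^c_{\gamma(t)}$ is automatic since $g(\dot\gamma,J\dot\gamma)=d\omega(\dot\gamma,\dot\gamma)=0$. For the height-two column, the bottom entry projects to the horizontal direction $J\dot\gamma$, giving $S^b_{\gamma(t)}=\spn\{J\dot\gamma\}$.

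The final and most delicate step is computing the projection of the top entry of the height-two column, which gives $S^a_{\gamma(t)}$. A priori this projection has the form $X_0+W$ for some horizontal correction $W\in T_{\gamma(t)}M$ dictated by the canonical normalization equations. To identify $W$ one evaluates $\mathcal{L}_{\vec H}$ on a vertical field lifting the Reeb covector direction; using the Tanno connection, the defining identity $(\nabla_Y J)X=Q(X,Y)$, and $JX_0=0$, the horizontal contribution works out to be $-2Q(\dot\gamma,\dot\gamma)-h_0\dot\gamma$. The factor $2$ appears because this vertical field sits at the top of a height-two column, hence is brought down twice by $\mathcal{L}_{\vec H}$ during the Gram--Schmidt-type orthogonalization, while the $-h_0\dot\gamma$ term reflects the fact that $\lambda$ need not annihilate $X_0$, coupling $h_0=\langle\lambda,X_0\rangle$ with $\dot\gamma$ through the projection. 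I expect this last computation to be the main obstacle: the skeleton of the splitting is handed to us by \cite{lizel} once the Young diagram is known, but pinning down the exact coefficients in $S^a$ demands explicit bracket computations for $\vec H$ with lifts of $X_0$, rewritten in Tanno-covariant form, together with a careful check against the normalization axioms.
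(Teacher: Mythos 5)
Your overall route is the same as the paper's: specialize the algorithm of \cite{lizel} to the contact case, build the canonical Darboux frame along the extremal, and read off $S^\alpha_{\gamma(t)}$ as $\pi_*$ of the corresponding $F$-block. The identification of the block structure (one height-two column for $\{a,b\}$, $2d-1$ height-one columns for $c$), and of $S^b=\spn\{J\dot\gamma\}$ and $S^c=J\dot\gamma^\perp\cap\distr_{\gamma(t)}$ with $\dot\gamma\in S^c$, is correct and argued essentially as in the paper (via Proposition~\ref{p:canonical} and the fact that $f_1,\dots,f_{2d}$ is an orthonormal frame of $\distr$).

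The gap is in the only part of the statement with real quantitative content: the determination of $S^a$. You assert that the horizontal correction to $X_0$ ``works out to be'' $-2Q(\dot\gamma,\dot\gamma)-h_0\dot\gamma$, but you do not derive it, and the heuristic you offer for the factor $2$ (the top of the height-two column being ``brought down twice by $\mathcal{L}_{\vec H}$'') is not the actual mechanism. In the paper one has $\pi_*F_a=-\pi_*\dot F_b$, and $\dot F_b$ can only be computed once $F_b$ is known \emph{including its vertical part}, which is itself dictated by the normalization; the two copies of $Q$ then arise from two distinct sources: one from $\nabla_{\dot\gamma}(J\dot\gamma)=-h_0\dot\gamma+Q(\dot\gamma,\dot\gamma)$ (a consequence of the geodesic equation $\nabla_{\dot\gamma}\dot\gamma=h_0J\dot\gamma$ for the Tanno connection together with the definition of $Q$), and one from differentiating the vertical correction $g(Q(\dot\gamma,\dot\gamma)-\nabla_{J\dot\gamma}\dot\gamma,X_j)\partial_j$ inside $F_b$. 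Carrying this out requires the Lie-derivative formulas for the lifted frame (Lemma~\ref{l:prelcomps}) and the auxiliary choice of an adapted \emph{parallel transported} frame satisfying Eq.~\eqref{eq:partrans2}; without these one cannot verify the normalization axioms (e.g.\ $\pi_*\ddot E_{c_j}=0$ and $\sigma(\dot F_b,E_{c_j})=0$, the latter being exactly where the coefficients $a_j=g(h_0\dot\gamma-2Q(\dot\gamma,\dot\gamma),X_j)$ first appear), nor pin down the coefficients in $X_a$. As written, your argument would equally well ``predict'' $-Q$ or $-3Q$ in place of $-2Q$; the bracket computations you defer are precisely the proof.
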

The above theorem follows from the explicit computation of the canonical frame. Moreover, we obtain an explicit expression for $\Rcan_{\gamma(t)}$ (Theorems~\ref{t:curvQ=0}-\ref{t:curvQneq0}).

\subsection{Relation between the two  approaches}
The main goal of this paper is to find the relation between the curvature-like objects introduced so far: on one hand, the canonical curvature operator $\Rcan_{\gamma(t)}$ that we have just defined, on the other hand, the invariants $\QQ^{(i)}_\lambda$, for $i \geq 0$, defined by the asymptotics~\eqref{eq:mainexp-intro} (both associated with a given geodesic with initial covector $\lambda$). 
 
 Notice that $\Rcan$ is a canonical operator defined on a space of dimension $n$, while the operators $\QQ^{(i)}$ are defined on a space of dimension $k$, equal to the dimension of the distribution. In the Riemannian case $k=n$, and a dimensional argument suggests that the first element $\QQ^{(0)}$ should ``contain'' all the canonical curvatures. Indeed we have
\begin{equation}
\Rcan_{\gamma(t)} = 3\QQ^{(0)}_{\lambda(t)} = \Riem(\cdot,\dot\gamma(t))\dot\gamma(t).
\end{equation}
In the sub-Riemannian setting the relation is much more complicated and in general the first element $\QQ^{(0)}$ recovers only \emph{a part} of the canonical curvature $\Rcan$. More precisely, as proved in \cite{curvature} 
\begin{equation}
\Rcan_{\gamma(t)}\big|_{\distr_{\g(t)}} = 3\QQ^{(0)}_{\lambda(t)}. 
\end{equation}
It turns out that, on  contact structures, we recover the whole $\Rcan$ by computing the higher order invariants $\QQ^{(0)}$,  $\QQ^{(1)}$ and $\QQ^{(2)}$ (see Theorem~\ref{t:main3} for the  explicit relations).

\subsection{Comparison theorems}

The restriction of the curvature operator on the invariant subspaces $S^\alpha_{\gamma(t)}$ is denoted $\Rcan_{\gamma(t)}^{\alpha\beta}$, for $\alpha,\beta =a,b,c$. We have more than one Ricci curvature, one partial trace for each subspace
\begin{equation}
\Riccan^\alpha_{\gamma(t)}:= \trace \left( \Rcan_{\gamma(t)}^{\alpha\alpha} : S_{\gamma(t)}^\alpha \to S_{\gamma(t)}^\alpha \right), \qquad \alpha = a,b,c.
\end{equation}
In the Riemannian case, we only have one subspace (the whole tangent space) and only one average: the classical Ricci curvature.

In \cite{BR-comparison}, under suitable conditions on the canonical curvature of a given sub-Riemannian geodesic, we obtained bounds on the first conjugate time along the geodesic and, in particular, Bonnet-Myers type results. In Sec.~\ref{s:bm} we first apply the results of \cite{BR-comparison} to contact structures. It is interesting to express these conditions in terms of the classical tensors of the contact structure (Tanno's tensor, curvature and torsion). With the explicit expressions for the canonical curvature of Sec.~\ref{s:computations}, we obtain the following results (Theorem~\ref{t:bmym}).

\begin{theorem} \label{t:spacca}
Consider a complete, contact structure of dimension $2d+1$, with $d>1$. Assume that there exists constants $\kappa_{1} > \kappa_{2} \geq 0$ such that, for any horizontal unit vector $X$
\begin{equation}\label{eq:bmYM-intro}
\Ric(X) - R(X,JX,JX,X) \geq (2d-2)\kappa_{1},\qquad \|Q(X,X)\|^{2}\leq (2d-2)\kappa_{2}.
\end{equation}
Then the manifold is compact with sub-Riemannian diameter not greater than $\pi/\sqrt{\kappa_{1}-\kappa_{2}}$, and the fundamental group is finite.
\end{theorem}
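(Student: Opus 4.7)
The plan is to reduce Theorem~\ref{t:spacca} to the abstract sub-Riemannian comparison theorem of \cite{BR-comparison} by rewriting the hypothesis~\eqref{eq:bmYM-intro} in terms of the canonical curvature introduced earlier in the paper, and then to run the classical Bonnet--Myers argument.

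\textbf{Step 1: translating the hypothesis into canonical curvature bounds.} Fix a unit speed horizontal geodesic $\gamma(t)$ with initial covector $\lambda$. By the canonical splitting of Theorem~\ref{t:cansplitting}, $T_{\gamma(t)}M = S^a_{\gamma(t)}\oplus S^b_{\gamma(t)}\oplus S^c_{\gamma(t)}$, and $S^c_{\gamma(t)} = J\dot\gamma^{\perp}\cap\distr_{\gamma(t)}$ has dimension $2d-1$ and contains $\dot\gamma(t)$. Using the explicit expressions for the canonical curvature operator $\Rcan_{\gamma(t)}$ given in Theorems~\ref{t:curvQ=0}--\ref{t:curvQneq0} (in terms of the Tanno curvature $R$, torsion, and Tanno tensor $Q$), I would compute the partial trace $\Riccan^c_{\gamma(t)} = \trace(\Rcan^{cc}_{\gamma(t)})$. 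Completing $\dot\gamma$ and $J\dot\gamma$ to a horizontal orthonormal frame, the dominant term is $\sum_i R(\dot\gamma,e_i,e_i,\dot\gamma)$ over $e_i\in J\dot\gamma^{\perp}\cap\distr_{\gamma(t)}$, which equals exactly $\Ric(\dot\gamma)-R(\dot\gamma,J\dot\gamma,J\dot\gamma,\dot\gamma)$. The remaining contribution from $Q$ in $\Rcan^{cc}$ is controlled from below by $-\|Q(\dot\gamma,\dot\gamma)\|^2$. Combined with~\eqref{eq:bmYM-intro}, this yields the pointwise lower bound
\begin{equation}
\Riccan^c_{\gamma(t)}\;\geq\;(2d-2)(\kappa_1-\kappa_2)\;>\;0.
\end{equation}
The partial traces $\Riccan^a_{\gamma(t)}$ and $\Riccan^b_{\gamma(t)}$ live on the one-dimensional factors $S^a$ and $S^b$ and, in the contact case, reduce to structural quantities depending only on the contact form and the Reeb direction, which enter the comparison theorem with fixed universal constants.

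\textbf{Step 2: conjugate point bound.} Plugging these bounds into the sub-Riemannian comparison theorem of \cite{BR-comparison} (applied in the contact case discussed in Sec.~\ref{s:bm}), I obtain that every unit speed geodesic $\gamma$ has first conjugate time
\begin{equation}
t_{\mathrm{conj}}(\gamma)\;\leq\;\frac{\pi}{\sqrt{\kappa_1-\kappa_2}}.
\end{equation}
Since a geodesic ceases to minimize past its first conjugate point, no minimizing geodesic in $M$ can be longer than $\pi/\sqrt{\kappa_1-\kappa_2}$.

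\textbf{Step 3: compactness and fundamental group.} By completeness and the sub-Riemannian Hopf--Rinow theorem, any two points of $M$ are joined by a minimizing geodesic, whose length is thus bounded by $\pi/\sqrt{\kappa_1-\kappa_2}$; this is the diameter bound. Closed metric balls are compact, and a ball of radius exceeding the diameter covers $M$, so $M$ is compact. For the fundamental group, I would lift the contact sub-Riemannian structure to the universal cover $\widetilde M\to M$: the lift is contact, complete, and the bounds~\eqref{eq:bmYM-intro} are preserved pointwise. Applying Steps 1--2 to $\widetilde M$, its diameter is bounded by the same constant, hence $\widetilde M$ is compact and the covering is finite, so $\pi_1(M)$ is finite.

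\textbf{Main obstacle.} The essential technical step is Step~1: unpacking the canonical curvature formulas of Theorems~\ref{t:curvQ=0}--\ref{t:curvQneq0} and tracking, with the correct signs, the contribution of the Tanno tensor $Q$ to $\Rcan^{cc}$ (and to the cross-terms with $S^a$), so that an \emph{upper} bound on $\|Q(X,X)\|^2$ indeed produces a \emph{lower} bound on $\Riccan^c$ with the sharp factor $(2d-2)(\kappa_1-\kappa_2)$. Once this identification is clean, the assumption $\kappa_1>\kappa_2$ immediately gives the strict positivity needed to feed the comparison theorem, and the rest of the argument is classical.
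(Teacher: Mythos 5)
Your proposal is correct and takes essentially the same route as the paper: the proof of Theorem~\ref{t:bmym} consists precisely of reading off the formula $\Riccan^c_{\gamma(t)} = \Ric(\tanf) - R(\tanf,J\tanf,J\tanf,\tanf) + \tfrac{1}{4}h_0^2(2d-2) - \|Q(\tanf,\tanf)\|^2$ from Theorem~\ref{t:curvQneq0}, deducing $\Riccan^c_{\gamma(t)} \ge (2d-2)(\kappa_1-\kappa_2)>0$ from the hypotheses, and invoking Theorem~\ref{t:bonnetmyers2} (your Steps 2--3 merely unpack what that theorem already asserts). The only slight imprecision is your remark about $\Riccan^a$ and $\Riccan^b$: for $d>1$ the comparison theorem used here requires no bound on them at all, and in any case they are genuine metric invariants (involving $R$, $\tau$ and $Q$) rather than quantities depending only on the contact form and the Reeb direction.
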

We stress that the curvatures appearing in~\eqref{eq:bmYM-intro} are computed w.r.t.\ Tanno connection. This generalizes the results for Sasakian structures obtained in \cite{LL-BishopLaplacian,LLZ-Sasakian}.

Finally, we obtain the following corollary for (strongly pseudo-convex) CR manifolds (that is, for $Q = 0$). Notice that this condition is strictly weaker than Sasakian. Observe that in the CR case, Tanno's curvature coincides with the classical Tanaka-Webster curvature.
\begin{corollary}
Consider a complete, (strongly pseudo-convex) CR structure of dimension $2d+1$, with $d>1$, such that, for any horizontal unit vector $X$
\begin{equation}
\Ric(X) - R(X,JX,JX,X) \geq (2d-2)\kappa > 0.
\end{equation}
Then the manifold is compact with sub-Riemannian diameter not greater than $\pi/\sqrt{\kappa}$, and the fundamental group is finite.
\end{corollary}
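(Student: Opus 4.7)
The plan is to deduce this statement as a direct specialization of Theorem~\ref{t:spacca}, using the fact that on a strongly pseudo-convex CR manifold the Tanno tensor vanishes identically. Recall that the Tanno connection is characterized by preserving $g$, $\omega$, $X_0$, and by specific torsion conditions; the condition $Q = \nabla J = 0$ is precisely the integrability condition for the CR structure (equivalently, $(M,\omega,g)$ being CR amounts to $Q\equiv 0$, cf.\ Sec.~\ref{s:contact}). Thus in the CR setting the second inequality in~\eqref{eq:bmYM-intro} is trivially satisfied with $\kappa_2 = 0$.

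First I would set $\kappa_1 := \kappa$ and $\kappa_2 := 0$, and verify the two hypotheses of Theorem~\ref{t:spacca}. The curvature hypothesis
\begin{equation}
\Ric(X) - R(X,JX,JX,X) \geq (2d-2)\kappa
\end{equation}
is exactly the assumption of the corollary, once one observes that in the CR case the curvature $R$ of the Tanno connection is nothing but the Tanaka--Webster curvature (as remarked in the statement). The torsion-type hypothesis
\begin{equation}
\|Q(X,X)\|^2 \leq (2d-2)\kappa_2 = 0
\end{equation}
holds identically since $Q=0$. The strict inequality $\kappa_1 > \kappa_2$ becomes $\kappa > 0$, which is assumed.

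Then I would invoke Theorem~\ref{t:spacca} to conclude that $M$ is compact, the sub-Riemannian diameter is bounded by
\begin{equation}
\mathrm{diam}(M) \leq \frac{\pi}{\sqrt{\kappa_1 - \kappa_2}} = \frac{\pi}{\sqrt{\kappa}},
\end{equation}
and the fundamental group is finite. No further work is needed.

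The only non-mechanical point is the identification $Q\equiv 0$ with the CR (integrable) condition, and the corresponding identification of the Tanno connection with the Tanaka--Webster connection in the pseudo-convex integrable setting; these are classical and recalled in Sec.~\ref{s:contact}. Beyond that, the corollary is a one-line specialization of the main Bonnet--Myers theorem.
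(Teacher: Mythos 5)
Your proposal is correct and follows exactly the paper's route: the corollary is obtained by specializing Theorem~\ref{t:spacca} (Theorem~\ref{t:bmym}) to the CR case, where $Q\equiv 0$ makes the second hypothesis of~\eqref{eq:bmYM-intro} hold trivially with $\kappa_2=0$, and one takes $\kappa_1=\kappa>0$. Nothing further is needed.
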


Besides the above references, other Bonnet-Myers type results are found in the literature, proved with different techniques and for different sub-Riemannian structures. For example, with heat semigroup approaches: for Yang-Mills type structures with transverse symmetries  \cite{garofalob} and Riemannian foliations with totally geodesic leaves \cite{BKW-weitzenbock}. With direct computation of the second variation formula: for 3D contact CR structures \cite{Rumin} and for general 3D contact ones \cite{Hughen-PhD}. Finally, with Riccati comparison techniques for 3D contact \cite{AAPL}, 3-Sasakian \cite{RS-3-Sasakian} and for any sub-Riemannian structure \cite{BR-comparison}.

A compactness result for contact structures is also obtained  \cite{baudoincontact} by applying the classical Bonnet-Myers theorem to a suitable Riemannian extension of the metric.

\subsection{Final comments and open questions}

In the contact setting, the invariants $\QQ^{(i)}$ for $i=0,1,2$ recover the whole canonical curvature operator $\Rcan$. It is natural to conjecture that, in the general case, there exists $N\in \N$  (depending on the sub-Riemannian structure) such that the invariants  $\QQ^{(i)}$ for $i=0,\ldots, N,$ recover the whole canonical curvature $\Rcan$.  Already in the contact case the relation is complicated due to the high number of derivatives required.

Finally, the comparison results obtained above  (and in the general sub-Riemannian setting in \cite{BR-comparison}) rely on the explicit computations of $\Rcan$ and its traces. In view of the relation we obtained between $\Rcan$ and the operators $\QQ^{(i)}$, it is natural to ask whether it is possible to obtain comparison theorems in terms of suitable $C^{N}$-bounds on the geodesic cost (for some finite $N$ depending on the sub-Riemannian structure).

\subsection*{Acknowledgements}
The first author has been supported by the grant of the Russian Federation for the state support of
research, Agreement No 14 B25 31 0029. This research has also been supported  by the European Research Council, ERC StG 2009 ``GeCoMethods,'' contract number 239748 and by the iCODE institute, research project of the Idex Paris-Saclay. The second and third authors were supported by the Grant ANR-15-CE40-0018 of the ANR. The third author was supported by the SMAI project ``BOUM''. This research, benefited from the support of the ``FMJH Program Gaspard Monge in optimization and operation research'', and from the support to this program from EDF.

We thank the Chinese University of Hong Kong, where part of this project has been carried out. We are grateful to Paul W.Y. Lee for his kind invitation and his precious contribution. We thank Fabrice Baudoin for useful discussions on Yang-Mills structures. We also warmly thank Stefan Ivanov for pointing  out a redundant assumption in a previous version of Theorem \ref{t:spacca}.

\section{Preliminaries} \label{s:srg}

We recall some basic facts in sub-Riemannian geometry. We refer to \cite{nostrolibro} for further details. 

Let $M$ be a smooth, connected manifold of dimension $n \geq 3$. A sub-Riemannian structure on $M$ is a pair $(\distr,\metrp)$ where $\distr$ is a smooth vector distribution of constant rank $k\leq n$ satisfying the \emph{H\"ormander condition} (i.e. $\mathrm{Lie}_x\distr = T_x M$, $\forall x \in M$) and $\metrp$ is a smooth Riemannian metric on $\distr$. A Lipschitz continuous curve $\g:[0,T]\to M$ is \emph{horizontal} (or \emph{admissible}) if $\dot\g(t)\in\distr_{\g(t)}$ for a.e. $t \in [0,T]$.
Given a horizontal curve $\g:[0,T]\to M$, the \emph{length of $\g$} is
\begin{equation}
\ell(\g)=\int_0^T \|\dot{\g}(t)\|dt,
\end{equation}
where $\|\cdot\|$ is the norm induced by $\metrp$. The \emph{sub-Riemannian distance} is the function
\begin{equation}
\dist(x,y):=\inf \{\ell(\g)\mid \g(0)=x,\g(T)=y, \g\; \text{horizontal}\}.
\end{equation}
The Rashevsky-Chow theorem (see \cite{chow,rashevsky}) guarantees the finiteness and the continuity of $\dist :M\times M \to \R$ with respect to the topology of $M$. The space of vector fields (resp. horizontal vector fields) on $M$ is denoted by $\Gamma(TM)$ (resp. $\Gamma(\distr)$).

Locally, the pair $(\distr,\metrp)$ can be given by assigning a set of $k$ smooth vector fields that span $\distr$, orthonormal for $\metrp$. In this case, the set $\{X_1,\ldots,X_k\}$ is called a \emph{local orthonormal frame} for the sub-Riemannian structure. 

A sub-Riemannian \emph{geodesic} is an admissible curve $\g:[0,T]\to M$ such that $\|\dot\g(t)\|$ is constant and for every $t \in [0,T]$ there exists an interval $t_1< t < t_2$ such that the restriction $\g|_{[t_1,t_2]}$ realizes the distance between its endpoints. The length of a geodesic is invariant by reparametrization of the latter. Geodesics with $\|\dot\g(t)\| = 1$ are called \emph{length parametrized} (or of \emph{unit speed}). A sub-Riemannian manifold is \emph{complete} if $(M,\dist)$ is complete as a metric space.

With a sub-Riemannian structure we associate the Hamiltonian function $H \in C^\infty(T^*M)$
\begin{equation}
H(\lambda) = \frac{1}{2}\sum_{i=1}^k\la \lambda, X_i\ra^2, \qquad \forall \lambda \in T^*M,
\end{equation}
for any local orthonormal frame $X_1,\ldots,X_k$, where $\la\lambda,\cdot\ra$ denotes the action of the covector $\lambda$ on vectors.  Let $\sigma$ be the canonical symplectic form on $T^*M$. 
For any function $a \in C^\infty(T^*M)$, the associated Hamiltonian vector field $\vec{a}$ is defined by the formula $da = \sigma(\cdot,\vec{a})$. For $i=1,\ldots,k$ let $h_i \in C^\infty(T^*M)$ be the linear-on-fibers functions $h_i(\lambda):= \la \lambda,X_i\ra$. Clearly,
\begin{equation}
H = \frac{1}{2}\sum_{i=1}^k h_i^2, \qquad\text{and}\qquad \vec{H} = \sum_{i=1}^k h_i \vec{h}_i.
\end{equation}

\subsection{Contact sub-Riemannian structures} In this paper we consider contact structures defined as follows. Let $M$ be a smooth manifold of odd dimension $\dim M = 2d+1$, and let $\omega \in \Lambda^{1}M$ a one-form such that $\omega \wedge (d\omega)^{d}\neq 0$. In particular  $\omega \wedge (d\omega)^{d}$ defines a volume form and $M$ is orientable. The \emph{contact distribution} is $\distr:=\ker \omega$. Indeed $d\omega|_{\distr} $ is non-degenerate.
The choice of a scalar product $g$ on $\distr$ defines a sub-Riemannian structure on $M$. In this case we say that $(M,\omega,g)$ is a \emph{contact sub-Riemannian manifold}.

Trajectories minimizing the distance between two points are solutions of first-order necessary conditions for optimality, given by a weak version of the Pontryagin Maximum Principle (see \cite{pontrybook}, or  \cite{nostrolibro} for an elementary proof). We denote by $\pi:T^*M \to M$ the standard bundle projection.
\begin{theorem}\label{t:pmpw}
Let $M$ be a contact sub-Riemannian manifold and let $\gamma:[0,T] \to M$ be a sub-Riemannian geodesic. Then there exists a Lipschitz curve $\lambda: [0,T] \to T^*M$, such that $\pi \circ \lambda = \gamma$ and for all $t \in [0,T]$:
\begin{equation} \label{eq:nnn}
\dot\lambda(t) = \vec{H}(\lambda(t)).
\end{equation}
\end{theorem}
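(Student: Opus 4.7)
The plan is to derive Theorem~\ref{t:pmpw} from the general Pontryagin Maximum Principle, reducing it to the well-known fact that contact distributions admit no non-trivial abnormal extremals.

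First, I would invoke the weak PMP for sub-Riemannian (or more generally, sub-Finsler) control systems, applied to the time-optimal control problem $\dot\gamma = \sum_{i=1}^{k} u_i X_i$ with cost $\tfrac{1}{2}\sum u_i^{2}$. The PMP yields the existence of a Lipschitz lift $\lambda:[0,T]\to T^{*}M$ with $\pi\circ\lambda=\gamma$ and a constant multiplier $\nu\in\{0,1\}$, not simultaneously zero with $\lambda$, such that almost everywhere
\begin{equation}
\dot\lambda(t)=\sum_{i=1}^{k}u_{i}(t)\,\vec{h}_{i}(\lambda(t)),
\end{equation}
together with the maximality condition $u_{i}(t)=\nu\,h_{i}(\lambda(t))$. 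In the \emph{normal} case $\nu=1$, the maximality condition gives $u_{i}=h_{i}\circ\lambda$ and hence $\dot\lambda=\sum_{i}h_{i}\vec{h}_{i}=\vec{H}(\lambda)$, which is precisely~\eqref{eq:nnn}. So the whole issue is to exclude the \emph{abnormal} alternative $\nu=0$ for non-trivial geodesics.

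The key step is therefore to show that in the contact setting the only abnormal extremals are trivial (constant). In the abnormal case we would have $h_{i}(\lambda(t))\equiv 0$ for $i=1,\dots,k$, i.e.\ $\lambda(t)$ annihilates $\distr_{\gamma(t)}$. Since $\distr=\ker\omega$, this forces
\begin{equation}
\lambda(t)=c(t)\,\omega_{\gamma(t)},\qquad c(t)\neq 0
\end{equation}
(non-triviality of the multiplier). Differentiating the identities $h_{i}(\lambda(t))=0$ along the abnormal flow and using the standard formula
\begin{equation}
\{h_{i},h_{j}\}(\lambda)=\langle\lambda,[X_{i},X_{j}]\rangle,
\end{equation}
I would show that at points where $\lambda=c\,\omega$ one has $\{h_{i},h_{j}\}(\lambda)=-c\,d\omega(X_{i},X_{j})$. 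Hence the first-order condition $\sum_{j}u_{j}\{h_{i},h_{j}\}(\lambda(t))=0$ becomes
\begin{equation}
c(t)\sum_{j=1}^{k}u_{j}(t)\,d\omega(X_{i},X_{j})\big|_{\gamma(t)}=0,\qquad i=1,\dots,k.
\end{equation}
Because $(M,\omega)$ is contact, the restriction $d\omega|_{\distr}$ is non-degenerate, and since $c(t)\neq 0$ this linear system forces $u_{j}(t)\equiv 0$, which would make $\gamma$ constant, contradicting the assumption that $\gamma$ is a non-trivial geodesic.

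The main obstacle in this plan is the clean bookkeeping of the abnormal case: one has to argue that one may take $c(t)$ never vanishing (otherwise $\lambda$ is identically zero by Cauchy uniqueness applied to the linear ODE governing $\lambda$) and that the first-order necessary conditions obtained by differentiating the constraints $h_{i}\circ\lambda\equiv 0$ are legitimate despite the a priori low regularity of the controls $u_{i}$; both points are handled by rewriting the abnormal equation as a linear time-varying ODE for $\lambda$ with $L^{\infty}$ coefficients and using the non-degeneracy of $d\omega|_{\distr}$ to conclude pointwise a.e. Everything else, including the original invocation of PMP and the passage to the Hamiltonian equation in the normal case, is standard.
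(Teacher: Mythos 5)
The paper itself does not prove Theorem~\ref{t:pmpw}; it is stated as an instance of the weak Pontryagin Maximum Principle with references to \cite{pontrybook} and \cite{nostrolibro}. Your proposal reconstructs exactly the standard argument contained in those references (PMP dichotomy, then exclusion of non-trivial abnormal extremals via the non-degeneracy of $d\omega|_{\distr}$), and the core of it is correct: in the abnormal case $\lambda(t)$ annihilates $\distr_{\gamma(t)}$, differentiating $h_i\circ\lambda\equiv 0$ yields $d\omega(X_i,\dot\gamma)=0$ for all $i$, and the contact condition forces $\dot\gamma=0$. Two points should be tightened. First, the maximality condition is misstated: as written, $u_i=\nu h_i(\lambda)$ would force $u\equiv 0$ in the abnormal case; the correct stationarity condition is $h_i(\lambda(t))=\nu\, u_i(t)$, which for $\nu=0$ gives $h_i(\lambda(t))\equiv 0$ --- the condition you in fact use afterwards, so the slip is harmless but should be corrected. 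Second, the PMP is a necessary condition for \emph{minimizers}, while a geodesic in the sense of the paper is only locally minimizing; you must apply the PMP on short subintervals and glue the resulting normal lifts into a single Lipschitz lift on $[0,T]$. The gluing is not automatic, since two normal lifts of the same curve have the same horizontal components $h_i$ but could a priori differ in the Reeb component $h_0$; in the contact case uniqueness of the normal lift along a non-constant geodesic follows from the Hamiltonian equation $\dot h_i=\sum_j h_j c_{ji}^0\, h_0+\sum_{j,k}h_jc_{ji}^kh_k$ together with $\sum_j h_j c_{ji}^0=g(J\dot\gamma,X_i)$ and the invertibility of $J$ on $\distr$, which pin down $h_0$. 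Finally, the statement also covers constant geodesics, for which one simply takes $\lambda(t)$ to be a constant covector annihilating $\distr_{x_0}$ (e.g.\ $\lambda\equiv 0$), so that both sides of \eqref{eq:nnn} vanish. With these additions your plan yields a complete proof along the same lines as the cited one.
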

If $\lambda:[0,T] \to M$ is a curve satisfying \eqref{eq:nnn}, it is called a \emph{normal} \emph{extremal}. It is well known that if $\lambda(t)$ is a normal extremal, then $\lambda(t)$ is smooth and its projection $\gamma(t):=\pi(\lambda(t))$ is a smooth geodesic. Let us recall that this characterization is not complete on a general sub-Riemannian manifold, since also abnormal extremals can appear.

Let $\lambda(t)=e^{t\vec{H}}(\lambda_0)$ be the integral curve of the Hamiltonian vector field $\vec{H}$ starting from $\lambda_0$. The sub-Riemannian \emph{exponential map} (from $x_{0}$) is
\begin{equation}\label{eq:expmap}
\EXP_{x_{0}}: T_{x_0}^*M \to M, \qquad \EXP_{x_{0}}(\lambda_{0}):= \pi(e^{\vec{H}}(\lambda_{0})).
\end{equation}
Unit speed geodesics correspond to initial covectors such that $H(\lambda_0) = 1/2$.

\begin{definition}
Let $\gamma(t)=\Exp_{x_{0}}(t\lam_{0})$ be a normal unit speed geodesic. We say that $t > 0$ is a \emph{conjugate time} (along the geodesic) if $t\lambda_0$ is a critical point of $\Exp_{x_0}$. The \emph{first conjugate time} is  $t_*(\lam_{0})=\inf\{t>0\mid  t\lam_{0}\  \text{is a critical point of}\  \Exp_{x_{0}}\}$.
\end{definition}

On a contact sub-Riemannian manifold or, in general, when there are no non-trivial abnormal extremals, the first conjugate time is separated from zero (see for instance \cite{nostrolibro,agrexp}) and, after its first conjugate time, geodesics lose local optimality. 

\subsection{Contact geometry}\label{s:contact}
Given a contact manifold $(M,\omega)$, the \emph{Reeb vector field} $X_{0}$ is the unique vector field satisfying $\omega(X_{0})=1$ and $d\omega(X_{0},\cdot)=0$. Clearly $X_0$ is transverse to $\distr$. We can extend the sub-Riemannian metric $g$ on $\distr$ to a global Riemannian structure (that we denote with the same symbol $g$) by promoting $X_0$ to an unit vector orthogonal to $\distr$.

We define the \emph{contact endomorphism} $J:TM\to TM$ by:
\begin{equation}
g(X,JY)=d\omega(X,Y),\qquad \all X,Y\in \Gamma(TM). 
\end{equation}
Clearly $J$ is skew-symmetric w.r.t.\ to $g$. By a classical result (see \cite[Thm. 4.4]{blair}) there always exists a choice of the metric $g$ on $\distr$ such that $J^{2}=-\mathbb{I}$ on $\distr$ and $J(X_{0})=0$, or equivalently
\begin{equation}\label{eq:compa}
J^{2}=-\mathbb{I}+\omega \otimes X_{0}.
\end{equation}
In this case, $g$ is said to be \emph{compatible} with the contact structure and $(M,\omega,g,J)$ is usually referred to as a \emph{contact metric structure} or a \emph{contact Riemannian structure}. In this paper, we always assume the metric $g$ to be compatible.

\begin{theorem}[Tanno connection, \cite{blair,tanno89}]
There exists a unique linear connection $\nabla$  on $(M,\omega,g,J)$ such that
\begin{itemize}
\item[(i)] $\nabla\omega = 0$,
\item[(ii)] $\nabla X_0 = 0$,
\item[(iii)] $\nabla g = 0$,
\item[(iv)] $\Tor(X,Y) = d\omega(X,Y) X_0$ for any $X,Y \in \Gamma(\distr)$,
\item[(v)] $\Tor(X_0,JX) = -J \Tor(X_0,X)$ for any vector field $X \in \Gamma(TM)$,
\end{itemize}
where $\Tor$ is the torsion tensor of $\nabla$.
\end{theorem}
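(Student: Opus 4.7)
The plan is to treat this as a characterization-of-connection theorem in the spirit of Koszul's formula for Levi-Civita. I would first establish uniqueness by analyzing the difference of two candidate connections, and then produce $\nabla$ by an explicit ansatz obtained by correcting the Levi-Civita connection $\nabla^{g}$ of the (compatible) Riemannian extension of $g$, for which one has $\omega=g(X_{0},\cdot)$.

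For uniqueness, suppose $\nabla$ and $\nabla'$ both satisfy (i)-(v) and set $D(X,Y):=\nabla_{X}Y-\nabla'_{X}Y$, a $(2,1)$-tensor. Axiom (ii) gives $D(X,X_{0})=0$ for every $X$; axiom (iii) makes $D(X,\cdot)$ into a $g$-skew-symmetric endomorphism of $TM$; axiom (iv) forces $D(X,Y)=D(Y,X)$ for $X,Y\in\Gamma(\distr)$, since the torsion along horizontal directions is prescribed. The usual symmetric-plus-skew cancellation argument then yields that the horizontal component of $D(X,Y)$ vanishes for $X,Y$ horizontal, while (i) determines its Reeb component through the identity $\omega(\nabla_{X}Y)=(\nabla^{g}_{X}\omega)(Y)$ (identically for $\nabla'$), so $D\equiv0$ on $\distr\times\distr$. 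What remains is $D(X_{0},\cdot)$ on $\distr$: condition (v) translates into the algebraic relation $\{J,D(X_{0},\cdot)\}=0$, and, coupled with $g$-skewness, this pins $D(X_{0},\cdot)$ down once the standard contact-metric identities $\nabla^{g}_{X}X_{0}=-JX-JhX$ (with $h:=\tfrac{1}{2}\mathcal{L}_{X_{0}}J$ symmetric and $\{h,J\}=0$) are invoked.

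For existence, I would make the ansatz
\begin{equation}
\nabla_{X}Y:=\nabla^{g}_{X}Y-\omega(Y)\nabla^{g}_{X}X_{0}+(\nabla^{g}_{X}\omega)(Y)X_{0}-\omega(X)JY+\mathcal{C}(X,Y),
\end{equation}
where $\mathcal{C}$ is a tensorial correction, tangent along $X_{0}$ in its first slot, chosen precisely to produce (v). Axioms (i)-(iii) follow immediately from $\nabla^{g}g=0$, the identity $\omega=g(X_{0},\cdot)$ and a short direct computation; axiom (iv) reduces to the standard contact identity $d\omega(X,Y)=g(\nabla^{g}_{X}X_{0},Y)-g(X,\nabla^{g}_{Y}X_{0})$ for $X,Y\in\Gamma(\distr)$. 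The delicate step is (v): expanding $\Tor(X_{0},JY)$ and $J\Tor(X_{0},Y)$ in terms of $\nabla^{g}$, $h$ and $\mathcal{C}$, the required anti-commutation is engineered using $\{h,J\}=0$.

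The main obstacle is therefore axiom (v), which couples the torsion with the contact endomorphism along the Reeb direction and is exactly the condition singling out Tanno's connection among all metric connections with the prescribed horizontal torsion. Once the correct form of $\mathcal{C}$ is identified, the verification reduces to careful bookkeeping with the standard contact-metric identities collected in~\cite{blair,tanno89}.
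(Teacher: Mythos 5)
First, a remark on the comparison you were asked to make: the paper offers no proof of this statement at all --- it is quoted from \cite{blair,tanno89} --- so your argument has to stand entirely on its own. Its architecture (uniqueness via the difference tensor of two candidate connections, existence via an explicit correction of the Levi-Civita connection $\nabla^g$) is the standard one, and your ansatz is, up to the unspecified term $\mathcal{C}$, exactly Tanno's formula. But the existence half is not actually a proof: you introduce a correction $\mathcal{C}$ ``chosen precisely to produce (v)'' and never identify it, and since you yourself single out (v) as the only delicate axiom, the proof stops exactly where the work begins. In fact no correction is needed: with $\mathcal{C}=0$ the four displayed terms already satisfy (i)--(v), and the verification of (v), using $\nabla^g_X X_0=-JX-JhX$ and $hJ+Jh=0$, \emph{is} the content of the existence proof. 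You should either carry out that computation or exhibit $\mathcal{C}$ explicitly.

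The uniqueness half contains a more serious gap. The horizontal part is fine: for $D=\nabla-\nabla'$, axioms (i)--(iv) and the symmetric-plus-skew cancellation give $D(X,\cdot)=0$ for $X\in\Gamma(\distr)$. But your claim that $g$-skewness of $A:=D(X_0,\cdot)|_{\distr}$ together with the anticommutation $AJ+JA=0$ coming from (v) ``pins down'' $A$ fails for $d>1$. Identifying $\distr_x\simeq\C^d$ via $J$, a $g$-skew endomorphism anticommuting with $J$ is a conjugate-linear map $v\mapsto B\bar v$ with $B$ complex skew-symmetric; this space vanishes for $d=1$ but has real dimension $d(d-1)>0$ for $d\geq 2$. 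Worse, if $\nabla$ satisfies (i)--(v) and $A$ is any such field of endomorphisms (extended by $A(X_0)=0$), then $\nabla+\omega\otimes A$ still satisfies all five conditions, so the gap cannot be closed by ``careful bookkeeping'' from (i)--(v) alone; your appeal to $\nabla^g_X X_0=-JX-JhX$ does not help, because the difference tensor of two connections both satisfying (i)--(v) never sees the Levi-Civita connection. The classical uniqueness argument (Tanno's original proposition, and the Tanaka--Webster case in the CR literature) uses an additional axiom prescribing $\nabla J$ (for Tanaka--Webster, $\nabla J=0$; for Tanno, $(\nabla_X J)Y$ equal to an explicit expression in Levi-Civita data): this forces $A$ to \emph{commute} with $J$, which combined with the anticommutation from (v) yields $A=0$. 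You need to incorporate that ingredient, in one form or another, for the uniqueness step to be valid.
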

The \emph{Tanno's tensor} is the $(2,1)$ tensor field defined by
\begin{equation}
Q(X,Y):=(\nabla_Y J)X,\qquad \all X,Y \in \Gamma(TM).
\end{equation}
A fundamental result due to Tanno is that $(M,\omega,g,J)$ is a (strongly pseudo-convex) CR manifold if and only if $Q=0$. In this case, Tanno connection is Tanaka-Webster connection. Thus, Tanno connection is a natural generalisation of the Tanaka-Webster connection for contact structures that are not CR.

\subsection{K-type structures}
If $X$ is an horizontal vector field, so is $\Tor(X_0,X)$. As a consequence, if we define $\tau(X) = \Tor(X_0,X)$, $\tau$ is a symmetric horizontal endomorphism which satisfies $\tau \circ J + J \circ \tau = 0$, by property (v). On CR manifolds, $\tau$ is called \emph{pseudo-Hermitian torsion}. 
A contact structure is $K$-type iff $X_0$ is a Killing vector field or, equivalently, if $\tau =0$.

\subsection{Yang-Mills structures}
We say that a contact sub-Riemannian structure is \emph{Yang-Mills} if the torsion $T$ of Tanno connection satisfies 
\begin{equation}\label{eq:YMdef}
\sum_{i=1}^{2d}(\nabla_{X_{i}}T)(X_{i},Y)=0,\qquad \all Y\in \Gamma(TM),
\end{equation}
for every orthonormal frame $X_{1},\ldots,X_{2d}$ of $\distr$. This definition  coincides with the classical one given in \cite[Sec. 5.1]{Falcitelli} for totally geodesic foliations with bundle like metric, and generalizes it to contact sub-Riemannian structures that are not foliations, e.g. when $\tau \neq 0$. 

\subsection{Sasakian structures}
If $(M,\omega,g,J)$ is a contact sub-Riemannian manifold with Reeb vector $X_0$, consider the manifold $M\times\R$. We denote vector fields on $M\times \R$ by $(X,f\partial_t)$, where $X$ is tangent to $M$ and $t$ is the coordinate on $\R$. Define the $(1,1)$ tensor
\begin{equation}
\mathbf{J}(X,f\partial_t) = (JX-fX_0,\omega(X)\partial_t).
\end{equation}
Indeed $\mathbf{J}^2 = -\mathbb{I}$ and thus defines an almost complex structure on $M\times \R$ (this clearly was not possible on the odd-dimensional manifold $M$). We say that the contact sub-Riemannian structure $(M,\omega,g,J)$ is \emph{Sasakian} (or \emph{normal}) if the almost complex structure $\textbf{J}$ comes  from a true complex structure. A celebrated theorem by Newlander and Nirenberg states that this condition is equivalent to the vanishing of the Nijenhuis tensor of $\mathbf{J}$. For a $(1,1)$ tensor $T$, its Nijenhuis tensor $[T,T]$ is the $(2,1)$ tensor
\begin{equation}
[T,T](X,Y) := T^2[X,Y] + [TX,TY] - T[TX,Y] - T[X,T Y].
\end{equation}
In terms of the original structure, the integrability condition $[\mathbf{J},\mathbf{J}] =0$ is equivalent to
\begin{equation}\label{eq:integr}
[J,J](X,Y) + d\omega(X,Y) X_0 = 0.
\end{equation}
\begin{remark}
A contact sub-Riemannian structure is Sasakian if and only if $Q=0$ and $\tau =0$, that is if and only if it is CR and $K$-type (see \cite[Theorems 6.7, 6.3]{blair}). Notice that a three-dimensional contact sub-Riemannian structure is automatically CR (see \cite[Cor. 6.4]{blair}), in particular, it is Sasakian if and only if it is $K$-type.
\end{remark}

\section{Jacobi fields revisited}\label{s:Jac}

Let $\lambda \in T^*M$ be the initial covector of a geodesic, projection of the extremal $\lambda(t) = e^{t\vec{H}}(\lambda)$. For any $\xi \in T_\lambda(T^*M)$ we define the field along the extremal $\lambda(t)$ as 
\begin{equation}
X(t):= e^{t\vec{H}}_* \xi \in T_{\lambda(t)}(T^*M).
\end{equation}
\begin{definition}
The set of vector fields obtained in this way is a $2n$-dimensional vector space, that we call \emph{the space of Jacobi fields along the extremal}.
\end{definition}
In the Riemannian case, the projection $\pi_*$ is an isomorphisms between the space of Jacobi fields along the extremal and the classical space of Jacobi fields along the geodesic $\gamma$. Thus, this definition agrees with the standard one in Riemannian geometry and does not need curvature or connection.

In Riemannian geometry, the subspace of Jacobi fields vanishing at zero carries information about conjugate points along the given geodesic. This corresponds to the subspace of Jacobi fields along the extremal such that $\pi_* X(0) = 0$. This motivates the following construction.

For any $\lambda\in T^*M$, let $\ve_{\lambda}:= \ker \pi_*|_\lambda \subset T_{\lambda}(T^*M)$ be the \emph{vertical subspace}. We define the family of Lagrangian subspaces along the extremal
\begin{equation}
\mc{L}(t):= e^{t\vec{H}}_* \ve_\lambda \subset T_{\lambda(t)}(T^*M).
\end{equation}

\begin{remark}
A time $t>0$ is a conjugate time along $\gamma$ if $\mc{L}(t) \cap \ve_{\lambda(t)} \neq \{0\}$. The first conjugate time is the smallest conjugate time, namely $t_*(\gamma) = \inf\{t>0 \mid \mc{L}(t) \cap \ve_{\lambda(t)} \neq \{0\}\}$.
\end{remark}

Notice that conjugate points correspond to the critical values of the sub-Riemannian exponential map with base at $\gamma(0)$. In other words, if $\gamma(t)$ is conjugate with $\gamma(0)$ along $\gamma$, there exists a one-parameter family of geodesics starting at $\gamma(0)$ and ending at $\gamma(t)$ at first order. Indeed, let $\xi \in \ve_\lambda$ such that $\pi_* \circ e^{t\vec{H}}_* \xi = 0$, then the vector field $\tau \mapsto \pi_* \circ e^{\tau\vec{H}}_* \xi$  is precisely the vector field along $\gamma(\tau)$ of the aforementioned variation. 

\subsection{Linearized Hamiltonian}
For any vector field $X(t)$ along an integral line $\lambda(t)$ of the (sub-)Riemannian Hamiltonian flow, a dot denotes the Lie derivative in the direction of $\vec{H}$:
\begin{equation}
\dot{X}(t) := \left.\frac{d}{d\eps}\right|_{\eps=0} e^{-\eps \vec{H}}_* X(t+\eps).
\end{equation}
The space of Jacobi fields along the extremal $\lambda(t)$ coincides with the set of solutions of the \emph{(sub-)Riemannian Jacobi equation}:
\begin{equation}
\dot{X} = 0.
\end{equation}
We want to write the latter in a more standard way. Pick a Darboux frame $\{E_i(t),F_i(t)\}_{i=1}^{n}$ along $\lambda(t)$ (to fix ideas, one can think at the canonical basis $\{\partial_{p_i}|_{\lambda(t)},\partial_{q_i}|_{\lambda(t)}\}$ induced by a choice of coordinates $(q_1,\ldots,q_n)$ on $M$). In terms of this frame, $X(t)$ has components $(p(t),q(t)) \in \R^{2n}$:
\begin{equation}
X(t) = \sum_{i=1}^n p_{i}(t) E_{i}(t) + q_{i}(t) F_{i}(t).
\end{equation}
The elements of the frame satisfy
\begin{equation}\label{eq:Jacobiframe}
\begin{pmatrix}
\dot{E} \\
\dot{F}
\end{pmatrix} = 
\begin{pmatrix}
C_1(t) & -C_2(t) \\
R(t) & -C_1^*(t)
\end{pmatrix} \begin{pmatrix}
E\\
F
\end{pmatrix},
\end{equation}
for some smooth families of $n\times n$ matrices $C_1(t),C_2(t),R(t)$, where $C_2(t) = C_2(t)^*$ and $R(t)= R(t)^*$. The notation for these matrices will be clear in the following. We only stress here that the particular structure of the equations is implied solely by the fact that the frame is Darboux. Moreover, $C_2(t) \geq 0$ as a consequence of the non-negativity of the sub-Riemannian Hamiltonian (in the Riemannian case $C_2(t) > 0$). In turn, the Jacobi equation, written in terms of the components $p(t),q(t)$, becomes
\begin{equation}
\begin{pmatrix}\label{eq:Jacobicoord}
\dot{p} \\ \dot{q}
\end{pmatrix} = \begin{pmatrix} - C_1(t)^* & -R(t) \\ C_2(t) & C_1(t)
\end{pmatrix} \begin{pmatrix}
p \\ q
\end{pmatrix}.
\end{equation}

\subsection{Canonical frame: Riemannian case}
In the Riemannian case one can choose a suitable frame (related with parallel transport) in such a way that, in Eq.~\eqref{eq:Jacobicoord}, $C_1(t) =0$, $C_2(t) = \mathbb{I}$ (in particular, they are constant), and the only remaining non-trivial block $R(t)$ is the curvature operator along the geodesic. The precise statement is as follows (see \cite{BR-comparison}).
\begin{proposition}\label{p:riemcan}
Let $\lambda(t)$ be an integral line of a Riemannian Hamiltonian. There exists a smooth moving frame $\{E_i(t),F_i(t)\}_{i=1}^n$ along $\lambda(t)$ such that:
\begin{itemize}
\item[(i)] $\spn\{E_1(t),\ldots,E_n(t)\} = \ve_{\lambda(t)}$.
\item[(ii)] It is a Darboux basis, namely
\begin{equation}
\sigma(E_i,E_j) = \sigma(F_i,F_j) = \sigma(E_i,F_j) - \delta_{ij} = 0, \qquad i,j=1,\ldots,n.
\end{equation}
\item[(iii)] The frame satisfies the structural equations
\begin{equation}
\dot{E}_i = - F_i, \qquad \dot{F}_i = \sum_{j=1}^n R_{ij}(t) E_j,
\end{equation}
for some smooth family of $n\times n$ symmetric matrices $R(t)$.
\end{itemize}
Moreover, the projections $f_i(t):=\pi_*F_i(t)$ are a parallel transported orthonormal frame along the geodesic $\gamma(t)$. 

Properties (i)-(iii) uniquely define the moving frame up to orthogonal transformations: if $\{\wt{E}_i(t),\wt{F}_j(t)\}_{i=1}^n$ is another frame satisfying (i)-(iii), for some family $\wt{R}(t)$, then there exists a constant $n\times n$ orthogonal matrix $O$ such that 
\begin{equation}\label{eq:orthonormal}
\wt{E}_i(t) = \sum_{j=1}^n O_{ij}E_j(t), \qquad  \wt{F}_i(t) = \sum_{j=1}^nO_{ij}F_j(t), \qquad \wt{R}(t) = O R(t) O^*. 
\end{equation}
\end{proposition}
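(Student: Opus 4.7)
The plan is to first construct the canonical frame via a gauge reduction procedure, then prove uniqueness by analyzing the residual gauge freedom, and finally identify the projected frame with parallel transport along $\gamma$. The single crucial ingredient that makes the Riemannian case tractable (and fails in general sub-Riemannian geometry) is that the block $C_2(t)$ in \eqref{eq:Jacobiframe} is strictly positive definite; I would verify this by computing in canonical coordinates $(q_i,p_i)$ on $T^*M$, where a vertical frame is given by $\partial_{p_i}$ and the bracket $[\partial_{p_i},\vec{H}]$ projects to $g^{ij}\partial_{q_j}$.

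\smallskip

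For existence, I would start from any smooth Darboux frame $\{\tilde E_i(t),\tilde F_i(t)\}$ along $\lambda(t)$ with $\tilde E_i$ spanning the vertical subspace $\ve_{\lambda(t)}$; such a frame exists by standard symplectic linear algebra, since $\ve_{\lambda(t)}$ is Lagrangian and any Lagrangian complement provides the $\tilde F_i$. The structural equations then take the form $\dot{\tilde E}=\tilde C_1\tilde E-\tilde C_2\tilde F$ and $\dot{\tilde F}=\tilde R\tilde E-\tilde C_1^{*}\tilde F$, with $\tilde C_2=\tilde C_2^{*}>0$ and $\tilde R=\tilde R^{*}$. Now consider the class of gauge transformations $E=A\tilde E$, $F=(A^{-1})^{*}\tilde F+B\tilde E$ that preserve both (i) and (ii); the Darboux condition forces $BA^{-1}$ to be symmetric. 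Imposing the first structural equation $\dot E=-F$ yields $A\tilde C_2=A^{-*}$, i.e.\ $A^{*}A=\tilde C_2^{-1}$, together with $B=-\dot A A^{-1}-A\tilde C_1A^{-1}$, uniquely determining $B$ once $A$ is fixed. Among the solutions $A$ of $A^{*}A=\tilde C_2^{-1}$ (which form an orbit under left multiplication by $O(n)$), one uses the residual $O(n)$-freedom to integrate away the antisymmetric part of the derivative, producing a frame with $C_1\equiv 0$ and $C_2\equiv \mathbb{I}$; the remaining block $R(t)$ is then forced to be symmetric by differentiating the Darboux relation $\sigma(F_i,F_j)=0$.

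\smallskip

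For uniqueness, suppose $\{\wt E_i,\wt F_i\}$ is another such frame; then property (i) forces $\wt E_i=\sum_jO_{ij}(t)E_j$ for some smooth matrix $O(t)$. Imposing $\wt F_i=-\dot{\wt E}_i$ together with $\sigma(\wt E_i,\wt F_j)=\delta_{ij}$ and $\sigma(\wt F_i,\wt F_j)=0$ yields the algebraic conditions $OO^{*}=\mathbb{I}$ and $\dot OO^{*}+O\dot O^{*}=0$ with $\dot O=0$, so $O$ is a constant orthogonal matrix; then \eqref{eq:orthonormal} follows by substitution in the second structural equation. To identify $f_i(t)=\pi_{*}F_i(t)$ with a parallel orthonormal frame, I would apply $\pi_*$ to $\dot E_i=-F_i$, use the local-coordinate computation of $\pi_{*}[\vec H,\partial_{p_i}]$ to show that $f_i$ is a well-defined vector field along $\gamma$ and that the map $E_i\mapsto f_i$ realizes the canonical metric isomorphism $\ve_{\lambda(t)}\cong T_{\gamma(t)}^{*}M\cong T_{\gamma(t)}M$; orthonormality then comes directly from the Darboux relation $\sigma(E_i,F_j)=\delta_{ij}$. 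Projecting $\dot F_i=\sum_jR_{ij}E_j$ gives $\pi_{*}\dot F_i=0$, which in coordinates translates precisely to $\nabla_{\dot\gamma}f_i=0$, i.e.\ parallel transport. Finally, one recovers that $R(t)$ represents $\Riem(\cdot,\dot\gamma)\dot\gamma$ in the frame $\{f_i\}$ by comparing the resulting ODE $\ddot q+Rq=0$ with the classical Jacobi equation.

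\smallskip

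The main obstacle I anticipate is in the existence step: once $C_2$ is normalized to $\mathbb{I}$, showing that the remaining residual $O(n)$-valued gauge can be used to kill $C_1(t)$ entirely requires solving an ODE of the form $\dot U=U\Gamma(t)$ for $U\in O(n)$, where $\Gamma$ is the antisymmetric part of $C_1$ after normalization; checking that $\Gamma$ is indeed antisymmetric (so that the ODE stays in $O(n)$) is a short but delicate computation using the symmetry of $C_2$ and the Darboux identities. Everything else, including the identification with parallel transport, reduces to coordinate computations using the explicit form of the Riemannian Hamiltonian vector field.
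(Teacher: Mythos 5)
The paper itself gives no proof of Proposition~\ref{p:riemcan}: it is quoted from \cite{BR-comparison} (ultimately the Zelenko--Li normal form), so there is no in-paper argument to compare against. Your gauge-reduction proof is the standard one and is essentially correct: the key point that $C_2(t)>0$ for a Riemannian Hamiltonian (so that $A^*A=\tilde C_2^{-1}$ is solvable and the normalization $C_2\equiv\mathbb{I}$ is possible), the residual $O(n)$-gauge used to kill the remaining antisymmetric block by solving $\dot U=U\Gamma(t)$ with $\Gamma$ antisymmetric, and the uniqueness argument via $OO^*=\mathbb{I}$, $\dot O=0$ are all exactly the right steps. Two minor remarks: your formula $B=-\dot A A^{-1}-A\tilde C_1A^{-1}$ corresponds to the convention $F=A^{-*}\tilde F+BE$ rather than the $F=A^{-*}\tilde F+B\tilde E$ you declared (with the latter it should read $B=-\dot A-A\tilde C_1$), and for the orthonormality of $f_i=\pi_*F_i$ the cleanest route is the identity $\sigma(V,W)=\langle V,\pi_*W\rangle$ for vertical $V$ (stated as a lemma later in the paper), which identifies $f_i$ with $E_i^\sharp$ and gives $g(f_i,f_j)=\sigma(E_i,F_j)=\delta_{ij}$ directly.
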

A few remarks are in order. Property (ii) implies that $\spn\{E_1,\ldots,E_n\}$, $\spn\{F_1,\ldots,F_n\}$, evaluated at $\lambda(t)$, are Lagrangian subspaces of $T_{\lambda(t)}(T^*M)$. Eq.~\eqref{eq:orthonormal} reflects the fact that a parallel transported frame is defined up to a constant orthogonal transformation. In particular, one can use properties (i)-(iii) to \emph{define} a parallel transported frame $\gamma(t)$ by $f_i(t):=\pi_* F_i(t)$. The symmetric matrix $R(t)$ induces a well defined operator $\Rcan_{\gamma(t)}:T_{\gamma(t)}M \to T_{\gamma(t)}M$
\begin{equation}\label{eq:defff}
\Rcan_{\gamma(t)} f_i(t) := \sum_{j=1}^n R_{ij}(t) f_j(t). 
\end{equation}

\begin{lemma}\label{l:Riemanncurv}
Let $\Riem: \Gamma(TM) \times \Gamma(TM) \times \Gamma(TM) \to \Gamma(TM)$ the Riemannian curvature tensor w.r.t. the Levi-Civita connection. Then
\begin{equation}
\Rcan_{\gamma(t)}v =  \Riem(v,\dot{\gamma}(t))\dot{\gamma}(t), \qquad v \in T_{\gamma(t)}M.
\end{equation}
\end{lemma}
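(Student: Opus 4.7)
The plan is to translate the structural equations of the canonical frame into the classical Jacobi equation on $TM$ and identify $R(t)$ with the Riemann curvature endomorphism by matching coefficients. First, I would expand an arbitrary Jacobi field along the extremal in the canonical frame as $X(t) = \sum_i p_i(t) E_i(t) + q_i(t) F_i(t)$. Since property (i) of Proposition~\ref{p:riemcan} says $E_i(t) \in \ve_{\lambda(t)} = \ker \pi_*$, the base projection is
\begin{equation}
\pi_* X(t) = \sum_{i=1}^n q_i(t) f_i(t),
\end{equation}
where, by hypothesis, $f_i(t) = \pi_* F_i(t)$ is a parallel transported orthonormal frame along $\gamma(t)$.

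Second, I would use the canonical form of the Jacobi equation \eqref{eq:Jacobicoord}, which in the Riemannian case has $C_1 = 0$ and $C_2 = \mathbb{I}$, giving $\dot q = p$ and $\dot p = -R(t) q$, hence $\ddot q + R(t) q = 0$. Because $\nabla_{\dot\gamma} f_i = 0$, covariant differentiation of $\pi_* X$ reduces to differentiating the scalar coefficients:
\begin{equation}
\nabla^{2}_{\dot\gamma}(\pi_* X)(t) = \sum_{i} \ddot q_i(t) f_i(t) = -\sum_{i,j} R_{ij}(t)\, q_j(t)\, f_i(t).
\end{equation}

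Third, I would invoke the standard correspondence that under the musical isomorphism $g : TM \to T^*M$ the Riemannian Hamiltonian flow is conjugate to the geodesic spray on $TM$. Consequently, a Jacobi field along the extremal $\lambda(t)$, viewed as the variation field of a one-parameter family of extremals $s \mapsto e^{t\vec H}(\lambda + s\xi)$, projects via $\pi_*$ to the variation field of a one-parameter family of geodesics on $M$, which is a classical Jacobi field. Therefore $\pi_* X$ satisfies the Riemannian Jacobi equation
\begin{equation}
\nabla^{2}_{\dot\gamma}(\pi_* X) + \Riem(\pi_* X, \dot\gamma)\dot\gamma = 0.
\end{equation}
Equating this with the previous display gives $\sum_j q_j(t) \Riem(f_j, \dot\gamma)\dot\gamma = \sum_{i,j} R_{ij}(t) q_j(t) f_i(t)$ for every $t$; since a Jacobi field can realize any initial value $q(0) \in \R^n$, by linearity we conclude $\Riem(f_j, \dot\gamma)\dot\gamma = \sum_i R_{ij} f_i$, which is precisely the definition \eqref{eq:defff} of $\Rcan_{\gamma(t)}$ on the basis $\{f_j(t)\}$.

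The only genuinely nontrivial step is the third one: checking that $X \mapsto \pi_* X$ sends Jacobi fields along the extremal to classical Jacobi fields along the projected geodesic, or equivalently that the Lie derivative $\mathcal{L}_{\vec H}$ on $T^*M$ corresponds via the musical isomorphism to covariant differentiation $\nabla_{\dot\gamma}$ along $\gamma$. This is Riemannian folklore but is where all the Levi-Civita information enters; once it is in hand, the remainder of the proof is the linear-algebraic coefficient matching above.
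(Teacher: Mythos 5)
Your proof is correct. The paper itself states Lemma~\ref{l:Riemanncurv} without proof (the canonical frame and this identification are imported from \cite{BR-comparison}), so there is no in-paper argument to compare against; your derivation is the standard one and fills the gap legitimately. The key step you isolate --- that $\pi_*$ carries Jacobi fields along the extremal to classical Jacobi fields along $\gamma$ --- is exactly the fact the paper asserts at the beginning of Section~\ref{s:Jac} (``the projection $\pi_*$ is an isomorphism between the space of Jacobi fields along the extremal and the classical space of Jacobi fields along the geodesic''), and your justification via the variation $s \mapsto \pi(e^{t\vec{H}}(\lambda + s\xi))$, whose variation field is by definition a classical Jacobi field, is the right one; the full statement that $\mathcal{L}_{\vec H}$ corresponds to $\nabla_{\dot\gamma}$ is not actually needed. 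Two cosmetic points: to conclude the identity at an arbitrary time $t$ you should note that $q(t)$, not just $q(0)$, ranges over all of $\R^n$ as the Jacobi field varies (immediate, since the fundamental solution of the linear system is invertible, so one may pose the Cauchy data at time $t$); and the final matching $\Riem(f_j,\dot\gamma)\dot\gamma = \sum_i R_{ij} f_i$ agrees with the definition \eqref{eq:defff}, which reads $\Rcan f_i = \sum_j R_{ij} f_j$, only after invoking the symmetry $R_{ij}=R_{ji}$ guaranteed by Proposition~\ref{p:riemcan}(iii) --- worth saying explicitly.
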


In terms of the above parallel transported frame, the Jacobi equation~\eqref{eq:Jacobicoord} reduces to the classical one for Riemannian structures:
\begin{equation}
\ddot{q}+R(t)q = 0, \qquad \text{with} \qquad R_{ij}(t) = g\left(\Riem(\dot{\gamma}(t),f_i(t))f_j(t),\dot{\gamma}(t)\right).
\end{equation}

\subsection{Canonical frame: sub-Riemannian case}\label{s:canSR}

In the (general) sub-Riemannian setting, such a drastic simplification cannot be achieved, as the structure is more singular and more non-trivial invariants appear. Yet it is possible to simplify as much as possible the Jacobi equation (seen as a first order equation $\dot{X} = 0$) for a sufficiently regular extremal. This is achieved by the so-called \emph{canonical Darboux frame}, introduced in \cite{lizel,lizel2,agrafeedback}.

In particular, $C_1(t)$ and $C_2(t)$ can be again put in a constant, normal form. However, in sharp contrast with the Riemannian case, their very structure may depend on the extremal. Besides, the remaining block $R(t)$ is still non-constant (in general) and \emph{defines} a canonical curvature operator along $\gamma(t)$. As in the Riemannian case, this frame is unique up to constant, orthogonal transformations that preserve the ``normal forms'' of $C_1$ and $C_2$.

At this point, the discussion can be simplified as, for contact sub-Riemannian structures, $C_1$ and $C_2$ have only one possible form that is the same for all non-trivial extremal. We refer to the original paper \cite{lizel} and the more recent \cite{curvature,BR-comparison,BR-connection} for a discussion of the general case and the relation between the normal forms and the invariants of the geodesic (the so-called \emph{geodesic flag}).

\begin{proposition}\label{p:canonical}
Let $\lambda(t)$ be an integral line of a contact sub-Riemannian Hamiltonian. There exists a smooth moving frame along $\lambda(t)$
\begin{equation}
E(t) = (E_0(t),E_1(t),\ldots, E_{2d}(t))^*, \qquad F(t) = (F_0(t),F_1(t),\ldots, F_{2d}(t))^*
\end{equation}
such that the following hold true for any $t$:
\begin{itemize}
\item[(i)] $\spn\{E_{0}(t),\ldots ,E_{2d}(t)\} = \mathcal{V}_{\lambda(t)}$.
\item[(ii)] It is a Darboux basis, namely
\begin{equation}
\sigma(E_\mu,E_\nu) = \sigma(F_\mu,F_\nu)= \sigma(E_\mu,F_\nu) - \delta_{\mu\nu} = 0\, \qquad \mu,\nu =0,\ldots,2d.
\end{equation}
\item[(iii)] The frame satisfies the \emph{structural equations}
\begin{align}
\dot{E}(t) & = C_1 E(t) - C_2 F(t),  \label{eq:struct1}\\
\dot{F}(t) & = R(t) E(t) - C_1^* F(t), \label{eq:struct2}
\end{align}
where $C_1$, $C_2$ are $(2d+1)\times (2d+1)$ matrices defined by
\begin{equation}
C_1 = \begin{pmatrix}
0 & 1 & 0 \\ 0 & 0 & 0 \\ 0 & 0 & 0_{2d-1}
\end{pmatrix}, \qquad C_2 = \begin{pmatrix}
0 & 0 & 0 \\ 0 & 1 & 0 \\ 0 & 0 &\mathbb{I}_{2d-1}
\end{pmatrix},
\end{equation}
and $R(t)$ is a $(2d+1)\times (2d+1)$ smooth family of symmetric matrices of the form
\begin{equation}
R(t) = \begin{pmatrix}
R_{00}(t) &  0 & R_{0j}(t) \\
0 & R_{11}(t) & R_{1j}(t) \\
R_{i0}(t) & R_{i1}(t) & R_{ij}(t)
\end{pmatrix}, \qquad i,j=2,\ldots,2d.
\end{equation}
Notice that $C_1$ is nilpotent, and $C_2$ is idempotent.
\end{itemize}
Moreover, the projections $f_i(t):=\pi_* F_i(t)$ for $i=1,\ldots,2d$ are an orthonormal frame for $\distr_{\gamma(t)}$ and $f_0(t):=\pi_* F_0(t)$ is transverse to $\distr_{\gamma(t)}$.

If $\{\wt{E}(t),\wt{F}(t)\}$ is another frame that satisfies (i)-(iii) for some matrix $\wt{R}(t)$, then there exists a constant orthogonal matrix $O$ that preserves the structural equations (i.e.  $OC_i O^* = C_i$) and
\begin{equation}
\wt{E}(t) = O E(t), \qquad \wt{F}(t) = O F(t), \qquad \wt{R}(t) = O R(t) O^*.
\end{equation}
\end{proposition}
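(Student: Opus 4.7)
The plan is to recognize the proposition as a specialization to contact structures of the general canonical frame theorem for monotone Jacobi curves due to Zelenko--Li \cite{lizel}. First, along any non-trivial extremal $\lam(t)$, one computes the geodesic flag $\mc{F}^k(t) := \mc{L}(t) + \vec{H}_* \mc{L}(t) + \cdots + \vec{H}_*^k \mc{L}(t)$ inside $T_{\lam(t)}(T^*M)$. Using an orthonormal frame $X_1, \ldots, X_{2d}$ of $\distr$ and the explicit expression $H = \tfrac12 \sum_i h_i^2$ for the Hamiltonian, a direct coordinate computation shows that $\pi_* \vec{H}_* \ve_{\lam} = \distr_{\pi(\lam)}$, so one derivative is needed to reach all horizontal directions, while exactly one extra derivative is needed to reach the Reeb direction (reflecting the non-integrable bracket $[\distr,\distr]=TM$). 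This produces a Young diagram with a single row of length $2$ and $2d-1$ rows of length $1$, totaling $2d+1$ boxes as required.

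Next, I apply the general theorem. The Zelenko--Li result asserts that any extremal with a fixed Young diagram admits a Darboux frame satisfying structural equations with \emph{constant} matrices $C_1, C_2$ explicitly determined by the diagram: $C_1$ is the horizontal shift along each row and $C_2$ is the projector onto the bottom-row boxes. For the contact Young diagram this yields precisely the matrices in the proposition, and the curvature $R(t)$ inherits the block pattern stated (including the vanishing of the $R_{01}$ entry). The symmetry of $R(t)$ and these extra vanishing constraints both follow by expanding the Darboux identities $\sigma(\dot F_\mu, F_\nu) - \sigma(\dot F_\nu, F_\mu) = 0$ using the explicit form of $C_1, C_2$.

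For the explicit construction one proceeds inductively: choose $E_0(t) \in \ve_{\lam(t)}$ as the generator of the ``long'' chain (normalized so that the image of $\vec{H}_*^2 E_0$ under $\pi_*$ is proportional to the Reeb field), set $E_1 := \dot E_0$, $F_1 := -\dot E_1$, and recover $F_0$ from the structural equation for $\dot F_1$. Complete $\{E_0, E_1\}$ to a basis of $\ve_{\lam(t)}$ by vectors $E_2, \ldots, E_{2d}$ satisfying appropriate $\sigma$-orthogonality relations, and define $F_i := -\dot E_i$ for $i \geq 2$. The claims that $\{\pi_* F_i\}_{i=1}^{2d}$ is an orthonormal frame of $\distr_{\gamma(t)}$ and $\pi_* F_0$ is transverse then follow directly from the construction. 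Uniqueness up to a constant orthogonal matrix commuting with $C_1, C_2$ is a standard gauge-fixing argument: any two frames satisfying (i)--(iii) differ by a matrix-valued function $O(t)$ that the structural equations force to be constant, orthogonal, and to preserve $C_1$ and $C_2$.

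The main technical obstacle is verifying that the Young diagram is indeed constant along every non-trivial extremal and identical for all extremals, so that the general theorem applies uniformly, together with carefully identifying the residual gauge group preserving the normal form of $R(t)$. Both points reduce to local coordinate computations exploiting the non-degeneracy of $d\omega|_{\distr}$, which is the defining property of the contact structure.
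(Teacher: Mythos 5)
Your strategy is the same as the paper's: Proposition~\ref{p:canonical} is not proved from scratch there either, but obtained by specializing the Zelenko--Li normal form theorem to the contact Young diagram (one row of length $2$ and $2d-1$ rows of length $1$), and the inductive construction you sketch ($E_0 \to E_1=\dot E_0 \to F_1=-\dot E_1 \to$ completion of the vertical basis $\to$ $F_0$ from the structural equation for $\dot F_1$) is exactly the algorithm the paper carries out in Section~\ref{s:computations}. Your computation of the geodesic flag and the identification of $C_1,C_2$ from the diagram are correct, as is the gauge analysis giving uniqueness up to a constant orthogonal matrix commuting with $C_1,C_2$.

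There is, however, one incorrect justification that you should repair: the vanishing of $R_{01}$ does \emph{not} follow from the Darboux identities. Differentiating $\sigma(F_\mu,F_\nu)=0$ along the structural equations yields only the symmetry $R_{\mu\nu}=R_{\nu\mu}$, while differentiating $\sigma(E_\mu,F_\nu)=\delta_{\mu\nu}$ and $\sigma(E_\mu,E_\nu)=0$ gives identities in $C_1,C_2$ alone and no constraint on $R$. Concretely, for an arbitrary function $\rho(t)$ the replacement $\wt F_0 = F_0+\rho E_0$ (all other frame elements unchanged) preserves (i), (ii) and the structural equations with the same $C_1,C_2$ (since the first column of $C_2$ vanishes), but shifts $R_{01}\mapsto R_{01}+\rho$ and $R_{00}\mapsto R_{00}+\dot\rho$. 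Hence $R_{01}=0$ is a \emph{normalization} condition, part of the definition of the canonical frame in the Zelenko--Li normal form: it is precisely what fixes $F_0$, namely $F_0:=-\dot F_1+R_{11}E_1+\sum_{j\ge 2}R_{1j}E_j$ with no $E_0$-component, and it is essential for the uniqueness statement, which would otherwise fail by the residual freedom in $\rho$. Since you invoke the general theorem, which builds this normalization into the canonical frame, your overall argument still goes through, but the sentence attributing $R_{01}=0$ to the Darboux relations must be replaced by this gauge-fixing step.
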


\subsection{Invariant subspaces and curvature}\label{s:invariantspaces}
Let $f_\mu(t)= \pi_* F_\mu(t)$ a frame for $T_{\gamma(t)}M$ (here $\mu = 0,\ldots,2d$). The uniqueness part of Proposition~\ref{p:canonical} implies that this frame is unique up to a constant rotation of the form
\begin{equation}
O = \pm \begin{pmatrix}
1 & 0 & 0 \\
0 & 1 & 0 \\
0 & 0 & U
\end{pmatrix}, \qquad U \in \mathrm{O}(2d-1),
\end{equation}
as one can readily check by imposing the conditions $OC_i O^* = C_i$. In particular, the following invariant subspaces of $T_{\gamma(t)} M$ are well defined
\begin{align}
S_{\gamma(t)}^a & := \spn\{ f_0 \}, \\
S_{\gamma(t)}^b & := \spn\{ f_1 \}, \\
S_{\gamma(t)}^c & := \spn\{ f_2,\ldots,f_{2d}\},
\end{align}
and do not depend on the choice of the canonical frame (but solely on the geodesic $\gamma(t)$). The sets of indices $\{0\}$, $\{1\}$ and $\{2,\ldots,2d\}$ play distinguished roles and, in the following, we relabel these groups of indices as: $a = \{0\}$, $b = \{1\}$ and $c= \{2,\ldots,2d\}$. In particular we have:
\begin{equation}
T_{\gamma(t)}M = S_{\gamma(t)}^{a} \oplus S_{\gamma(t)}^{b} \oplus S_{\gamma(t)}^{c}.
\end{equation}
With this notation, the curvature matrix $R(t)$ has the following block structure
\begin{equation}\label{eq:grouppati}
R(t) = \begin{pmatrix}
R_{aa}(t) &  0 & R_{ac}(t) \\
0 & R_{bb}(t) & R_{bc}(t) \\
R_{ca}(t) & R_{cb}(t) & R_{cc}(t)
\end{pmatrix},
\end{equation}
where $R_{aa}(t)$, $R_{bb}(t)$ are $1\times 1$ matrices, $R_{ac}(t)=R_{ca}(t)^*,R_{bc}(t)= R_{cb}(t)^*$  are $1 \times (2d-1)$ matrices and $R_{cc}$ is a $(2d-1)\times (2d-1)$ matrix.
\begin{definition}
The \emph{canonical curvature} is the symmetric operator $\Rcan_{\gamma(t)}: T_{\gamma(t)} M \to T_{\gamma(t)} M$, that, in terms of the basis $f_a,f_b,f_{c_2},\ldots,f_{c_{2d}}$ is represented by the matrix $R(t)$.
\end{definition}
The definition is well posed, in the sense that different canonical frames give rise to the same operator. For $\alpha,\beta= a,b,c$, we denote by $\mathfrak{R}_{\gamma(t)}^{\alpha\beta} : S^\alpha_{\gamma(t)} \to S^\beta_{\gamma(t)}$ the restrictions of the canonical curvature to the appropriate invariant subspace.
\begin{definition}
The \emph{canonical Ricci curvatures} are the partial traces
\begin{equation}
\Riccan^\alpha_{\gamma(t)}:= \trace \left( \Rcan_{\gamma(t)}^{\alpha\alpha} : S_{\gamma(t)}^\alpha \to S_{\gamma(t)}^\alpha \right), \qquad \alpha = a,b,c.
\end{equation}
\end{definition}

The canonical curvature $\mathfrak{R}(t)$ contains all the information on the germ of the structure along the geodesic. More precisely, consider two pairs $(M,g,\gamma)$ and $(M',g',\gamma')$ where $(M,g)$ and $(M',g')$ are two contact sub-Riemannian structures and $\gamma,\gamma'$ two geodesics. Then $\mathfrak{R}_{\gamma(t)}$ is congruent to $\mathfrak{R}'_{\gamma'(t)}$ if and only if the linearizations of the respective geodesic flows (as flows on the cotangent bundle, along the respective extremals) are equivalent (i.e. symplectomorphic).
\section{Geodesic cost and its asymptotic}

We start by a characterization of smooth points of the squared distance $\dist^2$ on a contact manifold. Let $x_0 \in M$, and let $\Sigma_{x_{0}}\subset M$  be the set of points $x$ such that there exists a unique length minimizer $\gamma:[0,1]\to M$ joining $x_{0}$ with $x$, that is non-conjugate.

\begin{theorem}[see \cite{agrachevsmooth}] \label{t:d2sr}
Let $x_{0}\in M$ and set $\f:= 	\frac{1}{2}\dist^{2}(x_{0},\cdot)$. Then $\Sigma_{x_0}$ is open, dense and $\f$ is smooth on $\Sigma_{x_{0}}$. Moreover if $x\in \Sigma_{x_{0}}$ then $d_{x}\f=\lambda(1)$, where $\lambda(t)$ is the normal extremal of the unique length minimizer $\gamma(t)$ joining $x_{0}$ to $x$ in time 1.
\end{theorem}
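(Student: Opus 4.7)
\emph{Plan.} I would argue in three stages: openness and smoothness of $f$ on $\Sigma_{x_0}$, the differential identity, and density of $\Sigma_{x_0}$.

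\emph{Stage 1: openness and smoothness.} Let $x \in \Sigma_{x_0}$ with unique non-conjugate minimizer $\gamma_{\lam_0}$ of initial covector $\lam_0 \in T^*_{x_0}M$. Non-conjugacy means that $\lam_0$ is a regular point of $\mathcal{E}_{x_0}$, so by the inverse function theorem there is a smooth local inverse $\Phi:U\to T^*_{x_0}M$ with $\Phi(x)=\lam_0$, defined on an open neighbourhood $U$ of $x$. A contradiction argument then shows that, after possibly shrinking $U$, the geodesic with initial covector $\Phi(y)$ is the unique minimizer joining $x_0$ to $y$ for every $y\in U$: if there were a sequence $y_n\to x$ admitting a second minimizer, one could extract a convergent subsequence of the corresponding initial covectors (minimizing covectors stay in compact sublevels of $H$, by continuity of $\dist$), and the limit would yield a second minimizer from $x_0$ to $x$, contradicting $x\in \Sigma_{x_0}$. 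Consequently $U\subset\Sigma_{x_0}$ (openness), and $f(y)=H(\Phi(y))$ on $U$, which is smooth.

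\emph{Stage 2: differential identity.} With $f=H\circ\Phi$ in hand, $d_x f=d_{\lam_0}H\circ d_x\Phi$. A direct computation using the definition of $\mathcal{E}_{x_0}$ and the tautological $1$-form on $T^*M$ identifies $d_x f$ with $\lam(1)$, the final covector of the lifted extremal $\lam(t)=e^{t\vec H}(\lam_0)$; concretely, differentiating $H(\Phi(y))$ and comparing with $(e^{\vec H})^*\lam_0=\lam(1)$ via $\pi\circ e^{\vec H}=\mathcal{E}_{x_0}$ gives the claim.

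\emph{Stage 3: density.} The crucial input is that, since contact structures admit no non-trivial abnormal minimizers, $f$ is locally semiconcave on $M\setminus\{x_0\}$. By Alexandrov's theorem, the set of points where $f$ is twice differentiable has full measure and in particular is dense. At each such point $x$: (i) the super-differential $\partial^+ f(x)$ contains the final covector of every minimizer from $x_0$ to $x$, so differentiability forces all such final covectors to coincide; since the Hamiltonian flow is a diffeomorphism of $T^*M$ and normal extremals are determined by their covectors, the minimizer is unique; (ii) if the unique $\lam_0$ were a critical point of $\mathcal{E}_{x_0}$, a second-variation argument along a Jacobi field in $\ker d_{\lam_0}\mathcal{E}_{x_0}$ produces a direction along which $f$ cannot admit a finite second derivative at $x$, contradicting Alexandrov twice differentiability. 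Hence $x\in\Sigma_{x_0}$, yielding density.

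The step I expect to be most delicate is (ii) in Stage 3: converting the degeneracy of $d_{\lam_0}\mathcal{E}_{x_0}$ into a genuine failure of Alexandrov twice differentiability of $f$ at $x$. This is where the fine interplay between the second variation of the length and the Jacobi equation along $\gamma$ enters, and is the technical core of \cite{agrachevsmooth}.
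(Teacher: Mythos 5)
The paper offers no proof of this theorem: it is imported verbatim from \cite{agrachevsmooth} (see also the surrounding remarks pointing to \cite{nostrolibro,curvature,RT-Sard}), so there is no in-paper argument to compare yours against. Judged on its own, your outline is a faithful reconstruction of the standard proof. Stages~1 and~2 are essentially right: non-conjugacy plus absence of abnormals makes $\lambda_0$ a regular point of $\Exp_{x_0}$, the inverse function theorem plus the compactness argument on initial covectors gives an open neighbourhood $U\subset\Sigma_{x_0}$ on which $\f(y)=H(\Phi(y))$ is smooth. Two small repairs: in the contradiction argument of Stage~1 the limiting covector of the second minimizers must equal $\lambda_0$ by uniqueness, and you then need local injectivity of $\Exp_{x_0}$ near $\lambda_0$ to conclude that the two minimizers from $x_0$ to $y_n$ already coincide for large $n$ (the limit alone does not produce a \emph{second} minimizer to $x$). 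In Stage~2 the expression $(e^{\vec H})^*\lambda_0$ does not typecheck; the clean route is to observe that $y\mapsto e^{\vec H}(\Phi(y))$ is a section of $T^*M$ whose image lies in the Lagrangian submanifold $e^{\vec H}(T^*_{x_0}M)$, hence is a closed one-form $dg$, and then to identify $g$ with $\f$ up to a constant by evaluating along the geodesic using $\langle\lambda(1),\dot\gamma(1)\rangle=2H(\lambda_0)$.

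The genuine gap is Stage~3(ii), and you have correctly located it. Semiconcavity of $\f$ away from $x_0$ (valid here because contact structures have no nontrivial abnormal minimizers), Alexandrov's theorem, and the singleton-superdifferential argument do give a dense set of points with a unique normal minimizer. But ruling out conjugacy at an Alexandrov point is not a one-line "no finite second derivative" statement: the standard mechanism is to consider $g(\lambda):=H(\lambda)-\f(\Exp_{x_0}(\lambda))\geq 0$, which vanishes to first order at $\lambda_0$, and to show that twice differentiability of $\f$ at $x$ forces the second variation (the index form) to be nondegenerate on $\ker d_{\lambda_0}\Exp_{x_0}$, which is incompatible with that kernel being nontrivial. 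As written, your proposal reduces the density claim to precisely this lemma of \cite{agrachevsmooth} without proving it; since the theorem is anyway quoted from that reference, this is an acceptable dependence, but it means your Stage~3 is a reduction, not a proof.
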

\begin{remark}
By homogeneity of the Hamiltonian, if in the statement of the previous theorem one considers a geodesic $\gamma:[0,T]\to M$ joining $x_{0}$ and $x$ in time $T$, then $d_{x}\f= T\lam(T)$.
\end{remark}
\begin{remark}
The statement of Theorem \ref{t:d2sr} is valid on a general sub-Riemannian manifold, with $\Sigma_{x_0}$ defined as the set of points $x$ such that there exists a unique minimizer joining $x_0$ with $x$, which must be not abnormal, and such that $x$ is not conjugate with $x_0$ (see \cite{nostrolibro,curvature,agrachevsmooth,RT-Sard}).
\end{remark}

\begin{remark} \label{cutzero} 
If $M$ is a contact sub-Riemannian manifold, the function $\f=\frac{1}{2}\dist^{2}(x_{0},\cdot)$ is Lipschitz on $M\setminus\{x_{0}\}$, due to the absence of abnormal minimizers (see for instance \cite{nostrolibro,noterifford}). In particular from this one can deduce that the set $M\setminus \Sigma_{x_{0}}$ where $\f$ is not smooth has measure zero.
\end{remark}

\begin{definition}
Let $x_{0}\in M$ and consider a geodesic $\gamma:[0,T]\to M$ such that $\gamma(0)=x_{0}$. The \emph{geodesic cost} associated with $\gamma$ is the family of functions for $t>0$
\begin{equation}
\cc_{t}(x):=-\frac{1}{2t}\dist^{2}(x,\gamma(t)),\qquad x\in M.
\end{equation}
\end{definition}

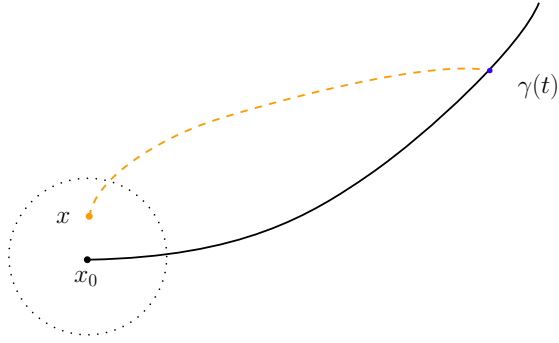
\begin{figure}[t]
\centering
\scalebox{0.6} 
{
\begin{pspicture}(0,-3.7)(12.561894,3.68)
\definecolor{color193}{rgb}{1.0,0.6,0.0}
\definecolor{color192}{rgb}{0.2,0.0,1.0}
\psbezier[linewidth=0.04,linecolor=color193,linestyle=dashed,dash=0.16cm 0.16cm,dotsize=0.07055555cm 2.0]{-*}(10.54,2.16)(9.14,2.44)(5.3795967,1.3622835)(4.42,1.04)(3.4604032,0.7177165)(1.96,-0.1)(1.74,-1.06)
\psbezier[linewidth=0.04,dotsize=0.07055555cm 2.0]{-*}(11.6,3.66)(11.219311,2.6917758)(9.175264,0.7630423)(7.88,-0.14)(6.5847354,-1.0430423)(5.088138,-1.9508411)(1.7,-2.02)
\fontsize{16}{0}
\usefont{T1}{ptm}{m}{n}
\rput(1.671455,-2.435){$x_0$}
\usefont{T1}{ptm}{m}{n}
\rput(11.591455,1.805){$\gamma(t)$}
\pscircle[linewidth=0.04,linestyle=dotted,dotsep=0.16cm,dimen=inner](1.71,-1.95){1.71}
\psdots[dotsize=0.12,linecolor=color192](10.52,2.16)
\usefont{T1}{ptm}{m}{n}
\rput(1.171455,-1.095){$x$}
\end{pspicture} 
}
\caption{The geodesic cost function.}
\end{figure}

From Theorem \ref{t:d2sr} one obtains smoothness properties of the geodesic cost.
\begin{theorem} \label{t:mst} 
Let $x_{0}\in M$ and $\gamma(t)=\Exp_{x_{0}}(t\lam_{0})$ be a geodesic. Then there exists $\eps>0$ and an open set $U\subset (0,\eps)\times M$ such that 
\begin{itemize}
\item[(i)] $(t,x_{0})\in U$ for all $t\in (0,\eps)$,
\item[(ii)] the function $(t,x)\mapsto \cc_{t}(x)$ is smooth on $U$,
\item[(iii)] for any $(t,x) \in U$, the covector $\lambda_x = d_x c_t$ is the initial covector of the unique geodesic connecting $x$ with $\gamma(t)$ in time $t$.
\end{itemize}
In particular $\lambda_0 = d_{x_0} c_t$ and $x_{0}$ is a critical point for the function $\dot{\cc}_{t}:=\frac{d}{dt}c_{t}$ for every $t\in(0,\eps)$.
\end{theorem}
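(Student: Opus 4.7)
The plan is to upgrade the pointwise smoothness of $\dist^{2}(\cdot,\gamma(t))$ provided by Theorem~\ref{t:d2sr} to joint smoothness in $(t,x)$ via an inverse function theorem on the cotangent bundle. The key auxiliary map is
\[
\Phi\colon (0,\eps)\times T^{*}M \longrightarrow (0,\eps)\times M\times M, \qquad \Phi(t,\mu)=\bigl(t,\pi(\mu),\pi(e^{t\vec H}\mu)\bigr),
\]
which records the endpoints of the normal geodesic with initial covector $\mu$ over the time interval $[0,t]$.

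First I would fix $\eps>0$ small enough that, for every $t\in(0,\eps)$, the restriction $\gamma|_{[0,t]}$ is the unique length minimizer between its endpoints and $t\lam_{0}$ is a regular point of $\Exp_{x_0}$. Both are standard short-time regularity results in sub-Riemannian geometry, and they apply here because the contact hypothesis rules out non-trivial abnormal extremals, so the first cut and first conjugate times along $\gamma$ are both strictly positive.

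Next I would show that at any $(t,\lam_{0})$ with $t\in(0,\eps)$ the differential $d\Phi$ is an isomorphism. Splitting $T_{\lam_{0}}(T^{*}M)=\ve_{\lam_{0}}\oplus\HH$ into the vertical subspace and any horizontal complement (so that $\pi_{*}\colon\HH\to T_{x_{0}}M$ is an isomorphism), a direct computation gives the schematic form
\[
d\Phi \simeq \begin{pmatrix} 1 & 0 & 0\\ 0 & 0 & \id_{T_{x_{0}}M}\\ \dot\gamma(t) & A(t) & * \end{pmatrix}, \qquad A(t):=d\bigl(\pi\circ e^{t\vec H}\bigr)\big|_{\ve_{\lam_{0}}}.
\]
Non-conjugacy of $t\lam_{0}$ is exactly the statement that $A(t)\colon\ve_{\lam_{0}}\to T_{\gamma(t)}M$ is an isomorphism, hence $d\Phi$ is invertible and $\Phi$ is a local diffeomorphism near $(t,\lam_{0})$. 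Gluing the local smooth inverses together (their coherence on overlaps follows from the uniqueness step), one obtains a smooth map $\Lam(t,x,y)\in T^{*}_{x}M$ defined on an open neighborhood $W$ of the set $\{(t,x_{0},\gamma(t))\colon t\in(0,\eps)\}$, equal to the initial covector of the unique minimizer from $x$ to $y$ in time $t$.

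Define $U:=\{(t,x)\in(0,\eps)\times M\colon (t,x,\gamma(t))\in W\}$; this is open and contains every $(t,x_{0})$, proving (i). The Hamiltonian expression for the action of a normal geodesic yields $\dist^{2}(x,\gamma(t))=2t^{2}H(\Lam(t,x,\gamma(t)))$ on $U$, so
\[
c_t(x) = -t\,H\bigl(\Lam(t,x,\gamma(t))\bigr)
\]
is jointly smooth in $(t,x)$, giving (ii). For (iii), I would apply Theorem~\ref{t:d2sr} with base point $\gamma(t)$ and reverse the extremal to identify $d_{x}c_t$ with $\Lam(t,x,\gamma(t))$. In particular $d_{x_{0}}c_t=\lam_{0}$ is independent of $t$, hence $d_{x_{0}}\dot c_t=0$ and $x_{0}$ is a critical point of $\dot c_t$. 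The main technical point is the non-degeneracy of $d\Phi$: once non-conjugacy is isolated in the block $A(t)$, the remainder is essentially bookkeeping.
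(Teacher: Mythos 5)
Your route is genuinely different from the paper's. The paper disposes of the two nontrivial points by citation: smoothness of $x\mapsto c_t(x)$ for fixed $t$ follows from Theorem~\ref{t:d2sr}, joint smoothness in $(t,x)$ is delegated to \cite[Appendix A]{curvature}, and the only argument actually written out is the chain $d_{x}c_t = -\tfrac{1}{t}d_x\f_t$ together with the time-reversal of the extremal, which gives (iii). You instead prove the joint smoothness directly, by inverting the endpoint map $\Phi(t,\mu)=(t,\pi(\mu),\pi(e^{t\vec H}\mu))$; the block decomposition of $d\Phi$ is correct, and the reduction of invertibility to the invertibility of $A(t)=d(\pi\circ e^{t\vec H})|_{\ve_{\lam_0}}$, i.e.\ to non-conjugacy of $t\lam_0$, is exactly right. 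This buys a self-contained proof of (ii) (essentially reconstructing what the cited appendix does), at the price of having to produce the smooth covector field $\Lam$ globally along $\{(t,x_0,\gamma(t)):t\in(0,\eps)\}$ rather than pointwise.

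That said, one step is under-justified and it is the load-bearing one. The inverse function theorem only tells you that $\Lam(t,x,y)$ is the initial covector of \emph{a} normal geodesic from $x$ to $y$ in time $t$ lying near $\gamma$; you then assert it is the initial covector of \emph{the unique minimizer}, and both the identity $\dist^2(x,\gamma(t))=2t^2H(\Lam(t,x,\gamma(t)))$ (hence (ii)) and the coherence of the local inverses on overlaps rest on that assertion. Justifying it requires an openness/stability argument: one must show that the conditions ``unique minimizer, normal, non-conjugate'' persist for $(t,x,y)$ in a neighborhood of the compact set $\{(t,x_0,\gamma(t)):t\in[\delta,\eps-\delta]\}$, and that the minimizer's covector varies continuously so that it must coincide with the branch selected by the local inversion. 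Appealing to ``the uniqueness step'' (which concerns only the central geodesic $\gamma|_{[0,t]}$) does not cover this; the natural fix is to invoke the openness of $\Sigma_{\gamma(t)}$ from Theorem~\ref{t:d2sr}, upgraded to joint dependence on $t$ — which is precisely the content of the appendix the authors cite. Similarly, in (iii) your application of Theorem~\ref{t:d2sr} with base point $\gamma(t)$ presupposes $x\in\Sigma_{\gamma(t)}$ for all $(t,x)\in U$, which should be built into the definition of $U$ (possibly after shrinking it). With that openness argument supplied, your proof is complete and arguably more informative than the one in the paper.
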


\begin{proof}
There exists $\eps>0$ small enough such that for $t\in (0,\eps)$, the curve $\gamma|_{[0,t]}$ is the unique minimizer joining $x_{0}=\gamma(0)$ and $\gamma(t)$, and $\gamma(t)$ is non conjugate to $\gamma(0)$.
As a direct consequence of Theorem \ref{t:d2sr} one gets $\cc_{t}(x)$ is smooth for fixed $t\in (0,\eps)$ and $x$ in a neighborhood of $x_{0}$. The fact that the function $c_{t}(x)$ is smooth on an open set $U$ as a function of the two variables is proved in \cite[Appendix A]{curvature}.
Let us prove (iii). Notice that 
\begin{equation} 
d_{x_{0}}c_{t}=-\frac{1}{t}d_{x_{0}}\left(\frac{1}{2}\dist^{2}(\gamma(t),\cdot)\right)=-\frac{1}{t}d_{x_{0}}\f_{t}. 
\end{equation}
where $\f_{t}$ denotes one half of the squared distance from the point $\gamma(t)$. Observe that $x\in \Sigma_{\gamma(t)}$ is in the set of smooth points of the squared distance from $\gamma(t)$. Hence the differential $d_{x}\f_{t}$ is the final covector of the unique geodesic joining $\gamma(t)$ with $x$ in time $1$. Thus, $-d_x \f_t$ is the initial covector of the unique geodesic joining $x$ with $\gamma(t)$ in time $1$, and  $d_x c_t = -\frac{1}{t} d_x \f_t$ is the initial covector $\lambda_0$ of the unique geodesic connecting $x$ with $\gamma(t)$ in time $t$.
\end{proof}

\subsection{A geometrical interpretation}\label{s:interpr}
By Theorem \ref{t:mst}, for each $(t,x) \in U$ the function $x\mapsto \dot c_{t}(x)$ has a critical point at $x_{0}$. This function will play a crucial role in the following, and has a nice geometrical interpretation. Let $W_{x,\gamma(t)}^t \in T_{\gamma(t)}^*M$ be the final tangent vector of the unique minimizer connecting $x$ with $\gamma(t)$ in time $t$. We have:
\begin{equation}\label{eq:interpr}
\dot{c}_t(x) = \frac{1}{2}\|W_{x_0,\gamma(t)}^t - W_{x,\gamma(t)}^t\|^2-\frac{1}{2}\|W_{x_0,\gamma(t)}^t\|^2.
\end{equation}
This formula is a consequence of Theorem~\ref{t:d2sr} and is proved in \cite[Appendix I]{curvature}. The second term $\|W_{x_0,\gamma(t)}^t\|$ is the speed of the geodesic $\gamma$ and is then an inessential constant. Eq.~\eqref{eq:interpr} has also natural physical interpretation as follows. Suppose that $A$ and $B$ live at points $x_A$ and $x_B$ respectively (see Fig.~\ref{fig:interpr}). Then $A$ chooses a geodesic $\gamma(t)$, starting from $x_A$, and tells $B$ to meet at some point $\gamma(t)$ (at time $t$). Then $B$ must choose carefully its geodesic in order to meet $A$ at the point $\gamma(t)$ starting from $x_B$, following the curve for time $t$. When they meet at $\gamma(t)$ at time $t$, they compare their velocity by computing the squared norm (or \emph{energy}) of the difference of the tangent vectors. This gives the value of the function $\dot{c}_t$, up to a constant.

The ``curvature at $x_0$'' is encoded in the behavior of this function for small $t$ and $x$ close to $x_{0}$. To support this statement, consider the Riemannian setting, in which Eq.~\eqref{eq:interpr} clearly remains true. If the Riemannian manifold is positively (resp. negatively) curved, then the two tangent vectors, compared at $\gamma(t)$, are more (resp. less) divergent w.r.t. the flat case (see Fig.~\ref{fig:interpr}).
\begin{remark}
We do not need parallel transport: $A$ and $B$  meet at $\gamma(t)$ and make \emph{there} their comparison. We only used the concept of ``optimal trajectory'' and ``difference of the cost''. This interpretation indeed works for a general optimal control system, as in the general setting of \cite{curvature}.
\end{remark}
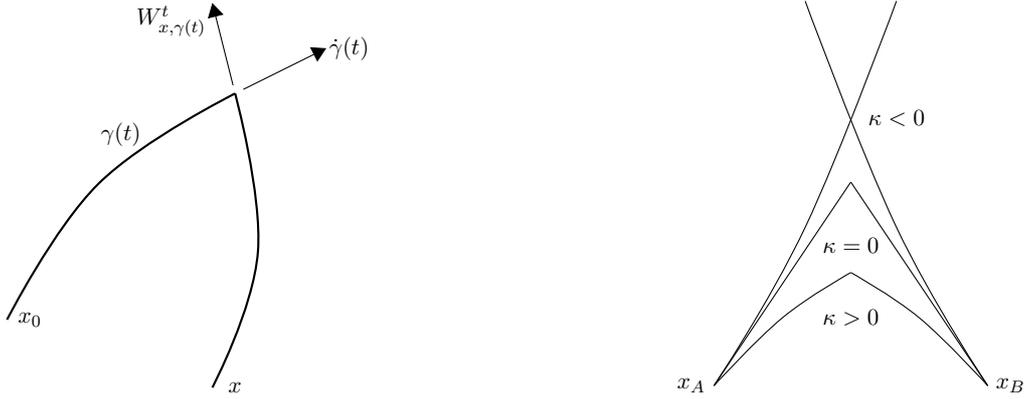
\begin{figure}
\centering
\begin{tikzpicture}[scale=0.6,every node/.style={scale=0.8}]
\draw[thick]  plot[smooth, tension=.7] coordinates {(-2,-2.5) (0,0.5) (3,2.5)};
\draw[thick]  plot[smooth, tension=.7] coordinates {(3,2.5) (3.5,-1) (2.5,-4)};
\draw (3,2.5) node (v1) {};
\draw [-triangle 60](v1) -- (2.5,4.5);
\draw [-triangle 60](v1) -- (5,3.5);
\node at (-1.5,-2.5) {$x_0$};
\node at (3,-4) {$x$};
\node at (0.5,1.6) {$\gamma(t)$};
\node at (5.5,3.5) {$\dot\gamma(t)$};
\node at (1.6,4.1) {$W_{x,\gamma(t)}^{t}$};
\end{tikzpicture}\qquad\qquad\qquad\qquad\qquad
\begin{tikzpicture}[scale=0.6,every node/.style={scale=0.8}]
\draw (11,0.5) -- (8,-4) node (v2) {};
\draw (11,0.5) -- (14,-4) node (v3) {};
\draw  plot[smooth, tension=.7] coordinates {(v2)};
\draw  plot[smooth, tension=.7] coordinates {(v2) (9.5,-2.5) (11,-1.5)};
\draw  plot[smooth, tension=.7] coordinates {(v3) (12.5,-2.5) (11,-1.5)};
\draw  plot[smooth, tension=.7] coordinates {(8,-4) (10,-0.5) (12,4.5)};
\draw  plot[smooth, tension=.7] coordinates {(v3) (12,-0.5) (10,4.5) };
\node at (7.5,-4) {$x_A$};
\node at (14.5,-4) {$x_B$};
\node at (11,-2.5) {$\kappa>0$};
\node at (11,-0.9) {$\kappa=0$};
\node at (12.,1.9) {$\kappa<0$};
\end{tikzpicture}
\caption{Geometric interpretation of $\dot{c}_t$. $W_{x,\gamma(t)}^t$ is the tangent vector, at time $t$, of the unique geodesic connecting $x$ with $\gamma(t)$ in time $t$.}\label{fig:interpr}
\end{figure}

\subsection{A family of operators}
Let $f:M\to \R$ be a smooth function. Its first differential at a point $x\in M$ is the linear map $d_{x}f:T_{x}M\to \R$. The \emph{second differential} of $f$ is well defined only at a critical point, i.e. at those points $x$ such that $d_{x}f=0$. In this case the map
\begin{equation}
d^{2}_{x}f: T_{x}M\times T_{x}M\to \R,\qquad d^{2}_{x}f(v,w)=V(Wf)(x),
\end{equation}
where $V,W$ are vector fields such that $V(x)=v$ and $W(x)=w$, respectively, is a well defined symmetric bilinear form that does not depend on the choice of the extensions. The associated quadratic form, that we denote by the same symbol $d^2_x f: T_x M \to \mathbb{R}$, is defined by
\begin{equation}
d^{2}_{x}f(v)=\frac{d^{2}}{dt^{2}}\bigg|_{t=0} f(\gamma(t)),\qquad \text{where} \quad \gamma(0)=x,\quad \dot \gamma(0)=v.
\end{equation}
By Theorem \ref{t:mst}, for small $t>0$ the function $x\mapsto \dot c_{t}(x)$ has a critical point at $x_{0}$. Hence we can consider the family of quadratic forms restricted on the distribution
\begin{equation} 
d^{2}_{x_0}\dot c_{t}\big|_{\distr_{x_0}}: \distr_{x_0} \to \R,\qquad \all t>0. 
\end{equation}
Using the sub-Riemannian scalar product on $\distr_{x_0}$ we can associate with this family of quadratic forms the family of symmetric operators 
$\QQ_\lam(t):\distr_{x_0} \to \distr_{x_0}$ defined for small $t>0$ by
\begin{equation}\label{eq:prscl}
d^{2}_{x_0}\dot c_{t}(w)=\metr{ \QQ_\lam(t)w}{w},\qquad  \all w\in \distr_{x_0},
\end{equation}
where $\lambda = d_{x_0} c_t$ is the initial covector of the fixed geodesic $\gamma$.

\begin{theorem}\label{t:main} 
Let $M$ be a contact sub-Riemannian manifold and $\gamma(t)$ be a non-trivial geodesic with initial covector $\lam\in T^{*}_{x_0}M$. Let $\QQ_\lam(t):\distr_{x_0} \to \distr_{x_0}$ be as in \eqref{eq:prscl}.

The family of operators $t\mapsto t^{2}\QQ_\lam(t)$  can be extended to a smooth family of operators on $\distr_{x_0}$ for small $t\geq 0$, symmetric with respect to $g$. Moreover,
\begin{equation}
\Qz_{\lam}:=\displaystyle \lim_{t\to 0^+}t^{2}\QQ_\lam(t) \geq \id > 0.
\end{equation}
Then we have the following Laurent expansion at $t=0$:
\begin{equation}\label{eq:mainexp}
\QQ_\lam(t)= \frac{1}{t^{2}}\Qz_{\lam}+ \sum_{i=0}^{m}\QQ^{(i)}_\lam t^{i}+O(t^{m+1}).
\end{equation}
\end{theorem}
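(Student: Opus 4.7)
The plan is to lift the computation to the cotangent bundle, identify $\QQ_\lam(t)$ with a symplectic invariant of the Lagrangian curve $\mathcal{L}(t) := e^{t\vec H}_* \ve_\lam$, and then reduce the asymptotic analysis to a matrix Riccati equation whose leading structure is universal in the contact case. The strategy follows the general philosophy of \cite{curvature}, but is much simpler here because the constant blocks $C_1, C_2$ of the canonical Darboux frame in Proposition~\ref{p:canonical} are universal.

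I would first set up a symplectic description of the Hessian. By Theorem~\ref{t:mst}(iii), the map $\Psi_t : x \mapsto d_x c_t$ is smooth near $x_0$ for small $t>0$ and satisfies $\Psi_t(x_0)=\lambda$ for every $t$. Differentiating the identity $\pi \circ e^{t\vec H}\circ \Psi_t \equiv \gamma(t)$ at $x_0$ shows that $\mathrm{Im}(d_{x_0}\Psi_t) = e^{-t\vec H}_* \ve_{\lambda(t)}$. A short computation in canonical coordinates, using $d_{x_0}\dot c_t = 0$ and that $\Psi_t$ is exact, yields, for any $v,w \in T_{x_0}M$,
\[
d^2_{x_0}\dot c_t(v,w) = \sigma\bigl(\partial_t d_{x_0}\Psi_t \cdot v,\ \eta_w\bigr),
\]
with $\eta_w$ any section over $x_0$ projecting to $w$. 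Thus $\QQ_\lam(t)$ depends only on the Lagrangian curve $\mathcal{L}(t)$ and the canonical transverse structure of $T_\lambda(T^*M)$.

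Next I would represent $\mathcal{L}(t)$ in the canonical frame $\{E_\mu(t), F_\mu(t)\}$ of Proposition~\ref{p:canonical}. This gives a parametrisation $\mathcal{L}(t) = \{(P(t)a, Q(t)a) : a \in \R^{2d+1}\}$ where $(P,Q)$ solves the matrix version of~\eqref{eq:Jacobicoord} with $(P(0), Q(0)) = (\id, 0)$. The universal form of $C_1$ (nilpotent) and $C_2$ (idempotent of rank $2d$) in the contact case forces $Q(t)$ to be invertible for small $t>0$, and $S(t) := P(t)Q(t)^{-1}$ satisfies the matrix Riccati equation
\[
\dot S + SC_2 S + C_1^* S + S C_1 + R(t) = 0,
\]
with a prescribed singular behaviour at $t = 0$. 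For the ``flat'' model $R \equiv 0$ this ODE can be solved explicitly, producing a meromorphic solution $S_0(t)$ whose leading Laurent coefficient is a universal symmetric matrix depending only on $C_1, C_2$. Setting $S = S_0 + \til S$, the perturbation $\til S$ solves a linear ODE with coefficients regular at $t=0$, hence extends smoothly there by a standard contraction/Gronwall argument; Taylor-expanding $R(t)$ then delivers the Laurent expansion~\eqref{eq:mainexp}. The bound $\Qz_\lam \ge \id > 0$ would follow from an explicit computation of the leading coefficient on $\distr_{x_0}$: on the $S^c_{\gamma(0)}$-component (where $C_1=0$ and $C_2=\id$) it equals $\id$, while on the $S^b_{\gamma(0)}$-direction the off-diagonal entry of $C_1$ produces a strictly larger value.

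The main obstacle is the first step: translating the metric Hessian $d^2_{x_0}\dot c_t|_{\distr_{x_0}}$ into a symplectic invariant of $\mathcal{L}(t)$ with the correct signs, scaling, and restriction to the distribution, so that the Riccati analysis yields precisely $\QQ_\lam(t)$. Once this metric/symplectic dictionary is in place, the existence of the smooth extension, the Laurent expansion, and the positivity of the singular term $\Qz_\lam$ all reduce to a universal, finite-dimensional computation with the constant matrices $C_1, C_2$ plus standard ODE perturbation theory.
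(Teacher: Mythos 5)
Your symplectic setup is correct and coincides with the paper's: the identity $d^2_{x_0}\dot c_t(v,w)=\sigma(\partial_t d_{x_0}\Psi_t\cdot v,\eta_w)$, the identification $\mathrm{Im}(d_{x_0}\Psi_t)=e^{-t\vec{H}}_*\ve_{\lambda(t)}$ (note that this curve, living in the fixed space $T_\lambda(T^*M)$, and not $e^{t\vec{H}}_*\ve_\lambda$, is the relevant one), and the reduction to a finite-dimensional computation in the canonical frame of Proposition~\ref{p:canonical} are all exactly what the paper does. The gap is in the step that is supposed to produce the smooth extension and the Laurent expansion. If $S_0=P_0Q_0^{-1}$ is the flat solution and $S$ the true one, the difference $\til{S}=S-S_0$ does \emph{not} solve a linear ODE with coefficients regular at $t=0$: it solves $\dot{\til{S}}+\til{S}C_2\til{S}+\til{S}C_2S_0+S_0C_2\til{S}+C_1^*\til{S}+\til{S}C_1+R(t)=0$, which is again quadratic, and whose linear coefficients $S_0C_2$, $C_2S_0$ blow up as $t\to 0^+$ because $Q_0(0)=0$ forces $S_0$ itself to have a pole. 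A standard contraction/Gronwall argument therefore does not apply as stated; to make this route work one must exploit the specific sign and structure of the singular linear part, which is precisely the delicate point.

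The paper sidesteps this entirely by never passing to the Riccati equation. It writes $e^{-t\vec{H}}_*E(t)=A(t)E(0)+B(t)F(0)$, shows that $d^2_{x_0}\dot c_t$ is represented by $\frac{d}{dt}\left[S(t)^{-1}\right]$ with $S^{-1}=B^{-1}A$, and observes that $A,B$ are \emph{smooth} solutions of the linear Cauchy problem~\eqref{eq:Cauchyp}; the only singularity then comes from inverting $B(t)$, whose determinant vanishes to finite order at $t=0$ (the explicit computation shows $B$ has order $3$). Hence $B^{-1}A$ automatically admits a Laurent expansion with a finite-order pole, and the theorem reduces to Taylor-expanding the linear system far enough to check that the pole of $\frac{d}{dt}\left[S(t)^{-1}\right]_\square$ has order exactly $2$ with leading coefficient $\mathrm{diag}(4,\id_{2d-1})\geq\id$. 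I would also flag that nilpotency of $C_1$ and idempotency of $C_2$ do not by themselves ``force'' $Q(t)$ to be invertible for small $t>0$: this is a controllability-type property of the specific pair $(C_1,C_2)$ that must be, and in the paper is, verified by direct computation. Either adopt the paper's linear-system route, or supply the missing analysis of the singular Riccati perturbation along the lines of \cite{curvature}.
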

In particular, Theorem~\ref{t:main} \emph{defines} a sequence of operators $\QQ^{(i)}_\lambda$, for $i \in \N$. These operators, together with $\Qz_\lam$, contain all the information on the germ of the structure along the fixed geodesic (clearly any modification of the structure that coincides with the given one on a neighborhood of the geodesic gives the same family of operators).

\begin{remark} 
Applying this construction to a Riemannian manifold, one finds that $\Qz_\lam = \mathbb{I}$ for any geodesic, and the operator $\QQ^{(0)}_\lam$ is indeed the ``directional'' sectional curvature in the direction of the geodesic (see \cite[Sect. 4.4.2]{curvature}):
\begin{equation}
\QQ^{(0)}_\lam=\frac13 \Riem(\cdot,\dot\gamma)\dot\gamma,
\end{equation}
where $\dot\gamma$ is the initial velocity vector of the geodesic (that is the vector corresponding to $\lam$ in the canonical isomorphism $T_{x}^{*}M\simeq T_{x}M$ defined by the Riemannian metric).
\end{remark}

The expansion is interesting in two directions. First, the singularity is controlled ($\QQ_\lambda(t)$ has a second order pole at $t=0$), even though the sub-Riemannian squared distance is not regular on the diagonal. Second, the Laurent polynomial of second order of $\QQ_\lambda(t)$ recovers the whole canonical curvature operator $\mathfrak{R}$ at the point. The next two theorems describe in detail the Laurent expansion of the operator $\QQ_\lam(t)$. 

\begin{theorem} \label{t:main2} The symmetric operator $\Qz_{\lam}: \distr_{x_{0}}\to \distr_{x_{0}}$ satisfies
\begin{itemize}
\item[(i)] $\spec\Qz_{\lam}=\{1,4\}$,
\item[(ii)] $\trace\Qz_{\lam}=2d+3$.  
\end{itemize}
The vector subspaces of $\distr_{\gamma(0)}$ 
\begin{equation}
S^b_{\gamma(0)}:=\spn\{J \dot\gamma(0)\}, \qquad S^c_{\gamma(0)}:=J\dot\gamma(0)^\perp \cap \distr_{\gamma(0)}
\end{equation}
are the eigenspaces of $\Qz_{\lam}$ corresponding to eigenvalues $4$ and $1$, respectively. In particular the eigenvalue $4$ has multiplicity $1$ while the eigenvalue $1$ has multiplicity $2d-1$.
\end{theorem}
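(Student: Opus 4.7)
The plan is to compute $\Qz_\lambda$ by expressing $\QQ_\lambda(t)$ via a canonical class of Jacobi fields along the extremal and extracting its leading $1/t^2$ singularity in the canonical Darboux frame of Proposition~\ref{p:canonical}. By Theorem~\ref{t:mst}, for small $t$ the map $x \mapsto d_x c_t$ is a local diffeomorphism whose inverse is essentially the time-$t$ exponential map. Differentiating at $x_0$ in a horizontal direction $w \in \distr_{x_0}$ and pairing with $w$ itself yields, by the arguments developed in \cite[Sect.~4]{curvature}, the value $g(\QQ_\lambda(t) w, w)$ as a linear-algebraic functional of the unique Jacobi field $X_w(\tau)$ along $\lambda$ characterized by the two-point boundary conditions $\pi_* X_w(0) = w$ and $\pi_* X_w(t) = 0$ (well-defined since $t$ is smaller than the first conjugate time). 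The whole question therefore reduces to solving this boundary value problem in the canonical frame asymptotically as $t \to 0$.

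Decompose $X_w(\tau) = \sum_\mu p_\mu(\tau) E_\mu(\tau) + q_\mu(\tau) F_\mu(\tau)$, split indices as $a = \{0\}$, $b = \{1\}$, $c = \{2, \ldots, 2d\}$, and recall that the Jacobi equation $\dot X = 0$ amounts to $\dot q = C_2 p + C_1 q$ and $\dot p = -C_1^* p - R(\tau) q$. Horizontality of $w$ forces $q_a(0) = 0$, $q_b(0) = w_b$, $q_c(0) = w_c$, and the endpoint condition reads $q(t) = 0$. The term $R(\tau) q$ is bounded and contributes only at subleading order; dropping it and integrating the resulting nilpotent system yields
\begin{align*}
q_c(\tau) & = w_c + p_c(0)\tau + O(\tau^2), \\
q_b(\tau) & = w_b + p_b(0)\tau - \tfrac{1}{2} p_a(0) \tau^2 + O(\tau^3), \\
q_a(\tau) & = w_b \tau + \tfrac{1}{2} p_b(0) \tau^2 - \tfrac{1}{6} p_a(0) \tau^3 + O(\tau^4).
\end{align*}
Imposing $q(t) = 0$ and solving for $p(0)$ in terms of $w$ and $t$ gives the leading asymptotics
\begin{equation*}
p_c(0) = -\frac{w_c}{t} + O(1), \qquad p_b(0) = -\frac{4 w_b}{t} + O(1), \qquad p_a(0) = -\frac{6 w_b}{t^2} + O(1/t).
\end{equation*}
The decisive factor $4$ in the $b$-slot (as opposed to $1$ in the $c$-slot) is forced by the coupling $\dot q_a = q_b$ encoded in $C_1$: the two constraints $q_a(t) = 0$ and $q_b(t) = 0$ must be met simultaneously with $q_a(0) = 0$, which inflates the required initial momentum in the $b$-direction.

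Substituting this back into the formula relating $\QQ_\lambda(t)$ to the Jacobi field and multiplying by $t^2$ yields $g(\Qz_\lambda w, w) = 4 \|w_b\|^2 + \|w_c\|^2$, so $\Qz_\lambda$ has eigenvalue $4$ on $\spn\{f_b(0)\}$ (multiplicity $1$) and eigenvalue $1$ on $\spn\{f_{c_2}(0), \ldots, f_{c_{2d}}(0)\}$ (multiplicity $2d-1$); in particular, $\spec \Qz_\lambda = \{1, 4\}$ and $\trace \Qz_\lambda = 4 + (2d-1) = 2d+3$. To conclude, one must identify $\spn\{f_b(0)\}$ with $\spn\{J \dot\gamma(0)\}$, from which the description of the $1$-eigenspace as $J\dot\gamma(0)^\perp \cap \distr_{\gamma(0)}$ follows by $g$-orthogonality. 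This last identification is the main obstacle and is tied to the explicit construction of the canonical frame for contact structures carried out in Section~\ref{s:computations}: there, $f_b$ is characterized through a specific Hamiltonian lift involving the form $d\omega(\cdot,\dot\gamma) = g(\cdot, J\dot\gamma)$, which identifies $f_b(0)$ with a nonzero multiple of $J\dot\gamma(0)$.
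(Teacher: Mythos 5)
Your proposal is correct and follows essentially the same route as the paper: the matrix $S(t)^{-1}=B(t)^{-1}A(t)$ that the paper Laurent-expands is exactly the map $q(0)\mapsto p(0)$ of your two-point boundary value problem, so your asymptotic solution of that BVP in the canonical frame (dropping the bounded term $R(\tau)q$, which only affects subleading orders) reproduces the same leading coefficient $\tfrac{1}{t^2}\,\mathrm{diag}(4,\mathbb{I}_{2d-1})$ after the $t$-derivative implicit in passing from $c_t$ to $\dot{c}_t$. Like the paper, you correctly defer the two remaining ingredients — the identification of $d^2_{x_0}\dot c_t$ with this momentum map (the $\Theta(t)B(t)=\mathbb{I}$ argument, cf.\ \cite{curvature}) and the identification $f_1(0)\propto J\dot\gamma(0)$ — to their proper sources, the latter being the explicit canonical-frame computation of Theorem~\ref{t:cansplitting}.
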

 
\begin{remark}
Let $K_{\gamma(0)} \subset \distr_{\gamma(0)}$ be the hyperplane $d\omega$-orthogonal to $\dot\gamma(0)$, that is 
\begin{equation}
K_{\gamma(0)}:=\{v \in \distr_{\gamma(0)}\mid d\omega(v,\dot\gamma(0)) = 0\}.
\end{equation}
Then $S^c_{\gamma(0)} =K_{\gamma(0)}$ and $S^b_{\gamma(0)} = K_{\gamma(0)}^\perp\cap \distr_{\gamma(0)}$.
\end{remark}
\begin{theorem} \label{t:main3}
The asymptotics of $\QQ_{\lam}(t)$ recovers the canonical curvature at $\gamma(0)$. More precisely, according to the orthogonal decomposition $\distr_{\gamma(0)} = S^b_{\gamma(0)} \oplus S^c_{\gamma(0)}$ we have
\begin{equation}
\mathcal{Q}_\lambda^{(0)} = \begin{pmatrix}
 \frac{2}{15} R_{bb} & \frac{1}{12} R_{bc} \\[0.1cm]
 \frac{1}{12} R_{cb} & \frac{1}{3} R_{cc} \\  
\end{pmatrix},
\end{equation}
\begin{equation}
\mathcal{Q}_\lambda^{(1)} = \begin{pmatrix}
 \frac{1}{15} \dot R_{bb} & \frac{1}{10} R_{ac}-\frac{1}{30} \dot R_{bc} \\[0.1cm]
 \frac{1}{10} R_{ca}-\frac{1}{30} \dot R_{cb} & \frac{1}{6} \dot R_{cc} \\
\end{pmatrix},
\end{equation}
\begin{equation}
\mathcal{Q}_\lambda^{(2)} = \frac{1}{240}\begin{pmatrix}
 \frac{1}{35} \left(240 R_{aa}+ 44 R_{bb}^2+65 R_{bc} R_{cb}+240 \ddot R_{bb} \right) & R_{bb} R_{bc}-2  R_{bc} R_{cc}+12 \dot R_{ac}-6 \ddot R_{bc} \\
 R_{cb}  R_{bb} -2 R_{cc} R_{cb}+12 \dot R_{ca}-6 \ddot R_{cb} & 16 R_{cc}^2+R_{bc} R_{cb}+12 \ddot R_{cc}
\end{pmatrix},
\end{equation}
where the dot denotes the derivative w.r.t. $t$ and here $R_{\alpha\beta} = R_{\alpha\beta}(0)$ for $\alpha,\beta =a,b,c$ are the components of the canonical curvature operator $\Rcan_{\gamma(0)}:T_{x_0} M \to T_{x_0} M$.
\end{theorem}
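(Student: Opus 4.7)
The proof follows the general strategy of \cite{curvature}, specialised to the contact setting, combined with the explicit canonical Darboux frame of Proposition~\ref{p:canonical}.

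First, observe that $d_{x_0} c_t = \lambda$ for all $t \in (0,\eps)$, so that $x_0$ is a critical point of $\dot c_t$ and $d^{2}_{x_0}\dot c_t$ is well defined. By Theorem~\ref{t:mst}, the section $\Phi_t \colon x\mapsto d_x c_t$ is a local diffeomorphism from a neighbourhood of $x_0$ onto a neighbourhood of $\lambda$ inside the Lagrangian submanifold $\mc{H}(t) := e^{-t\vec{H}}_{*}\ve_{\lambda(t)}\subset T_\lambda(T^*M)$. Hence the Hessian $d^{2}_{x_0} c_t$ encodes, in symplectic language, the relative position of the two Lagrangians $\ve_\lambda$ and $\mc{H}(t)$ at $\lambda$, and $\QQ_\lambda(t)$ corresponds to reading off the block of $\partial_{t} d^{2}_{x_0} c_t$ associated with the distribution $\distr_{x_0}$ via the metric $g$.

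Second, use the canonical frame of Proposition~\ref{p:canonical} as coordinates on $T_\lambda(T^*M)$. The Jacobi fields starting in $\ve_\lambda$ have components $(P(t)p_0, Q(t)p_0)$, where the $(2d+1)\times(2d+1)$ matrices $P(t),Q(t)$ solve
\begin{equation}
\dot P = -C_1^{*} P - R(t) Q, \qquad \dot Q = C_2 P + C_1 Q, \qquad P(0)=\id,\ Q(0)=0.
\end{equation}
For small $t>0$ the matrix $Q(t)$ is invertible (by the absence of conjugate times near zero on contact structures) and $\mc{H}(t)$ is the graph of the symmetric matrix $S(t)=-Q(t)^{-1}P(t)$. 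The explicit computation of $d^{2}_{x_0}c_t$ in these coordinates, carried out in \cite[Sect.~5]{curvature}, expresses $\QQ_\lambda(t)$ as an explicit rational function of $P(t), Q(t)$ and their first derivatives; the restriction to $\distr_{x_0}$ is read off the $(b\oplus c)\times(b\oplus c)$-block in the decomposition $T_{x_0}M = S^a \oplus S^b \oplus S^c$ provided by Proposition~\ref{p:canonical}.

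Third, perform the perturbative expansion. Expand $R(t) = R + t\dot R + \tfrac{t^{2}}{2}\ddot R + O(t^{3})$ (with $R = R(0)$) and integrate the Jacobi ODE as a formal power series, producing Taylor expansions of $P(t)$ and $Q(t)$ to the required order. Because $C_1$ is nilpotent and $C_2$ annihilates the $a$-direction, the various $\{a,b,c\}$-blocks of $Q(t)$ vanish at $t=0$ at different rates; this is precisely the mechanism producing the non-trivial singular term $\Qz_\lambda$ of Theorem~\ref{t:main2}. Inverting $Q(t)$ as a block matrix with these mixed vanishing orders and substituting into the formula for $\QQ_\lambda(t)$ yields a Laurent expansion whose coefficients of $t^{0}, t^{1}, t^{2}$ provide $\QQ^{(0)}_\lambda, \QQ^{(1)}_\lambda, \QQ^{(2)}_\lambda$ respectively.

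The main obstacle is purely computational: extracting $\QQ^{(2)}_\lambda$ requires the Taylor coefficients of $(P,Q)$ well past those needed for $\QQ^{(0)}_\lambda$, and the inversion of $Q(t)$ mixes these coefficients non-trivially because the $\{a,b,c\}$-blocks vanish at different rates and because $C_1, C_2$ do not commute with the time-dependent curvature $R(t)$. The unusual rational coefficients ($\tfrac{2}{15}$, $\tfrac{44}{35}$, $\tfrac{1}{240}$, $\ldots$) are entirely fixed by the normal form of $(C_1,C_2)$ and arise as definite integrals of polynomial expressions appearing in this inversion. Once organised along the $\{a,b,c\}$-block decomposition, the calculation becomes mechanical and can be verified with computer algebra.
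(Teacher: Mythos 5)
Your overall strategy is the one the paper actually follows: identify the image of $d^2_{x_0}c_t$ with the Lagrangian subspace $e^{-t\vec{H}}_*\ve_{\lambda(t)}\subset T_\lambda(T^*M)$, represent it as a graph in the canonical Darboux frame of Proposition~\ref{p:canonical}, and extract the Laurent coefficients by solving the associated linear ODE perturbatively (the paper solves the Cauchy problem for the frame coefficients $A(t),B(t)$ to order $9$ and computes $\QQ_\lambda(t)=\frac{d}{dt}\left[B(t)^{-1}A(t)\right]_\square$). Your description of why the $\{a,b,c\}$-blocks vanish at different rates, and of the purely mechanical origin of the rational coefficients, is accurate.

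There is, however, a concrete error in the central formula of your second step. The matrices $P(t),Q(t)$ with $P(0)=\id$, $Q(0)=0$ parametrize the \emph{forward} image $\mc{L}(t)=e^{t\vec{H}}_*\ve_\lambda\subset T_{\lambda(t)}(T^*M)$, not $\mc{H}(t)=e^{-t\vec{H}}_*\ve_{\lambda(t)}$. The latter is cut out by the condition $q(t)=0$ on initial data $(p_0,q_0)$, namely $Q(t)p_0+M_{22}(t)q_0=0$, where $M_{22}$ belongs to the complementary solution (the one with $P(0)=0$, $Q(0)=\id$); hence the Hessian of $c_t$ in the frame $f(0)$ is $-Q(t)^{-1}M_{22}(t)$, which by symplecticity of the fundamental matrix equals the paper's $B(t)^{-1}A(t)$, and \emph{not} $-Q(t)^{-1}P(t)$. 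The two differ already at first order: $A(t)=\id+tC_1+O(t^2)$ while $P(t)=\id-tC_1^*+O(t^2)$, and $C_1\neq -C_1^*$. Since $B(t)^{-1}$ has poles up to order three on the $a$-block, the discrepancy $B^{-1}(A-P)=B^{-1}\bigl(t(C_1+C_1^*)+O(t^2)\bigr)$ survives restriction to the $b\oplus c$ block and contaminates the expansion from the singular term onwards; it is precisely the $a$-row/column coupling that produces the $R_{aa}$ and $R_{ac}$ entries in $\QQ^{(1)}_\lambda$ and $\QQ^{(2)}_\lambda$, so the final coefficients would come out wrong. (A quick sanity check that flags the mistake: $-Q^{-1}P$ is not symmetric in general, whereas the graph matrix of a Lagrangian subspace over the horizontal must be; it is $PQ^{-1}$, the graph matrix of $\mc{L}(t)$, that is symmetric.) Once the correct quotient is used, the rest of your argument coincides with the paper's computation.
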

\begin{remark}
To recover the operator $\mathfrak{R}_{\gamma(s)} : T_{\gamma(s)} M \to T_{\gamma(s)} M$ for all $s$ on the geodesic, one needs the Laurent expansion of the operator $\QQ_{\lambda(s)}(t)$ for all the points $\lambda(s)$ on the extremal.
\end{remark}
\begin{proof}[Proof of Theorems.~\ref{t:main}, \ref{t:main2} and \ref{t:main3}]
For a $m$-tuple $v = (v_1,\ldots,v_m)$ of vectors and a linear map $L:V \to W$, the notation $L(v)$ denotes the $m$-tuple $(Lv_1,\ldots,Lv_m)$. Consider a canonical frame $E(t),F(t)$ from Proposition~\ref{p:canonical}. Accordingly, $f(t):= \pi_* F(t) \in T_{\gamma(t)} M$ is the $(2d+1)$-tuple
\begin{equation}
f(t) = (f_0(t),\underbrace{f_1(t),\ldots,f_{2d}(t)}_{\text{o.n. basis for $\distr_{\gamma(t)}$}})^*.
\end{equation}
The geodesic cost function $c_t$ is regular at $x_0$, and thus its second differential is not well defined as a quadratic form on $T_{x_0} M$. Still it is well defined as a linear map as follows. Consider the differential $d c_t : M \to T^*M$. Taking again the differential at $x_0$, we get a map $d^2_{x_0} c_t : T_{x_0} M \to T_{\lambda}(T^*M)$, where $\lambda = d_{x_0} c_t$ is the initial covector associated with the geodesic.

For fixed $t>0$ and $x$ in a neighborhood $U_t$ of $x_0$, we clearly have $\pi ( d_x c_t) = x$. Thus $d^2_{x_0} c_t$ has maximal rank. Similarly, $\pi(e^{t\vec{H}} d_x c_t) = \gamma(t)$ by Theorem~\ref{t:mst}. It follows that $e^{t\vec{H}}_*  d^2_{x_0}c_t (T_{x_0}M)  = \mathcal{V}_{\lambda(t)}$. Then there exists a smooth family of $(2d+1)\times (2d+1)$ matrices $\Theta(t)$ such that
\begin{equation}\label{eq:verticality}
e^{t\vec{H}}_*  d^2_{x_0} c_t(f(0)) = \Theta(t) E(t).
\end{equation}
We pull back the canonical frame $E(t)$ through the Hamiltonian flow, and express this in terms of the canonical frame at time zero:
\begin{equation}\label{eq:expansion}
e^{-t\vec{H}}_* E(t) = A(t)E(0) + B(t) F(0), \qquad t \geq 0.
\end{equation}
for some smooth families of $(2d+1)\times (2d+1)$ matrices $A(t),B(t)$ (in particular $A(0) = \mathbb{I}$ and $B(0) = 0$). As we already noticed, $\pi (d_x c_t )= x$ (here $t>0$), then
\begin{align}
f(0) & = \pi_* d^2_{x_0} c_t(f(0))  & \\
& =\Theta(t) \pi_* e^{-t{\vec{H}}}_* E(t) & (\text{by Eq.~\eqref{eq:verticality}}) \\
& = \Theta(t) \pi_* (A(t)E(0) + B(t) F(0)) & (\text{by Eq.~\eqref{eq:expansion}}) \\
& = \Theta(t) B(t) f(0) & \text{(by definition of $f$ and verticality of $E$})
\end{align}
Thus $\Theta(t) B(t) = \mathbb{I}$ (in particular, $B(t)$ is not-degenerate for small $t>0$). Then
\begin{equation}
d^2_{x_0} c_t(f(0)) = B(t)^{-1} A(t) E(0) + F(0), \qquad t >0.
\end{equation}
Let $S(t) := A(t)^{-1} B(t)$. We get, taking a derivative
\begin{equation}
d^2_{x_0} \dot{c}_t(f(0)) = \frac{d}{dt}\left[S(t)^{-1}\right] E(0).
\end{equation}
In particular, $d^2_{x_0} \dot{c}_t$ maps $T_{x_0}M$ to the vertical subspace $\mathcal{V}_\lambda = T_{\lambda} (T_{x_0}^*M) \simeq T_{x_0}^*M$. By the latter identification, we recover the standard second differential (the Hessian) $d^2_{x_0} \dot{c}_t : T_{x_0}M \to T_{x_0}^*M$ at the critical point $x_0$. The associated quadratic form is, for $t>0$,
\begin{equation}\label{eq:notrestricted}
\langle d^2_{x_0} \dot{c}_t(f(0)), f(0)\rangle = \frac{d}{dt}\left[S(t)^{-1}\right].
\end{equation}
According to \eqref{eq:prscl}, $\QQ_\lambda(t) : \distr_{x_0} \to \distr_{x_0}$ is the operator associated with the quadratic form $d^2_{x_0} \dot{c}_t : T_{x_0} M \to T_{x_0}^*M$, restricted on $\distr_{x_0}$, through the sub-Riemannian product. From Proposition~\ref{p:canonical}, that $f = (f_0,f_1,\ldots,f_{2d})$ where $f_1,\ldots,f_{2d}$ is an orthonormal frame for $\distr$. Then, in terms of the basis $f_1,\ldots,f_{2d}$ of $\distr_{x_0}$ (here we suppressed explicit evaluation at $t=0$) we have, from~\eqref{eq:notrestricted}:
\begin{equation}
\QQ_\lambda(t) = \frac{d}{dt}\left[S(t)^{-1}\right]_\square.
\end{equation}
where the notation $M_\square$ denotes the bottom right $2d\times 2d$ block of a matrix $M$.

We are left to compute the asymptotics of the matrix $S(t)^{-1} =B(t)^{-1}A(t)$, where $A(t)$ and $B(t)$ are defined by Eq.~\eqref{eq:expansion}. In order to do that we study a more general problem. We introduce smooth matrices $C(t)$, $D(t)$ such that (here $t\geq 0$)
\begin{align}
e^{-t\vec{H}}_* E(t)  &= A(t)E(0) + B(t) F(0), \\
e^{-t\vec{H}}_* F(t)  &= C(t)E(0) + D(t) F(0).
\end{align}
Since $(E(t),F(t))$ satisfies the structural equations~\eqref{eq:struct1}-\eqref{eq:struct2}, we find that $A,B,C,D$ are the solutions of the following Cauchy problem:
\begin{align}\label{eq:Cauchyp}
\frac{d}{dt} \begin{pmatrix}
A & B \\
C & D
\end{pmatrix} = \begin{pmatrix}
C_1 &  -C_2 \\ R(t) & -C_1^*
\end{pmatrix} \begin{pmatrix}
A & B \\
C & D
\end{pmatrix}, \qquad \begin{pmatrix}
A(0) & B(0) \\
C(0) & D(0)
\end{pmatrix} = \begin{pmatrix}
\mathbb{I} & 0 \\
0 & \mathbb{I}
\end{pmatrix}.
\end{align}
From~\eqref{eq:Cauchyp} one can compute an arbitrarily high order Taylor polynomial of $A(t)$ and $B(t)$, needed for the computation of $S(t)^{-1}$. 

Notice that $A(0) = \mathbb{I}$, but $B(0) = 0$ (but $B(t)$ is non-singular as soon as $t > 0$). We say that $B(t)$ has order $r$ (at $t=0$) if $r$ is the smallest integer such that the $r$-th order Taylor polynomial of $B(t)$ (at $t=0$) is non-singular. It follows from explicit computation that $B(t)$ has order $3$:
\begin{equation}
B(t) = -t
\begin{pmatrix}
0 & 0 & 0 \\ 0 & 1 & 0 \\ 0 & 0 & \mathbb{I}_{2d-1}
\end{pmatrix} + 
\frac{t^2}{2}\begin{pmatrix}
0 & -1 & 0 \\ 1 & 0 & 0 \\ 0 & 0 & 0
\end{pmatrix} + 
\frac{t^3}{6}  \begin{pmatrix}
1 & 0 & 0 \\ 0 & 0 & 0 \\ 0 & 0 & 0
\end{pmatrix} + 
O(t^4).
\end{equation}
In particular, for $t>0$, the Laurent polynomial of $B(t)^{-1}$ has poles of order $\geq 3$. Thus, to compute the Laurent polynomial of $S(t)^{-1} = B(t)^{-1}A(t)$ at order $N$, one needs to compute $B(t)$ and $A(t)$ at order $N+6$ and $N+3$, respectively. After explicit and long computations, one finds
\begin{equation}
S(t)^{-1} = \frac{1}{t^3} \begin{pmatrix}
12 & 0 & 0 \\ 0 & 0 & 0 \\ 0 & 0 & 0
\end{pmatrix} + 
\frac{1}{t^2}\begin{pmatrix}
0 & -6 & 0 \\ -6 & 0 & 0 \\ 0 & 0 & 0
\end{pmatrix} + 
\frac{1}{t}
\begin{pmatrix}
\tfrac{6}{5}R_{bb} & 0 & 0 \\ 0 & -4 & 0 \\ 0 & 0 & -\mathbb{I}_{2d-1}
\end{pmatrix} + O(1).
\end{equation}
In particular, for the restriction we obtain
\begin{equation}
\left[S(t)^{-1}\right]_\square = -\frac{1}{t}
\begin{pmatrix}
4 & 0 \\  0 & \mathbb{I}_{2d-1}
\end{pmatrix} + O(1).
\end{equation}
The restriction to the distribution has the effect of taking the ``least degenerate'' block. Taking the derivative, we obtain
\begin{equation}\label{eq:principal}
\frac{d}{dt}\left[S(t)^{-1}\right]_\square = \frac{1}{t^2}
\begin{pmatrix}
4 & 0 \\ 0 & \mathbb{I}_{2d-1}
\end{pmatrix} + O(t).
\end{equation}
This proves Theorems~\ref{t:main} and~\ref{t:main2}, as $\Qz_\lambda = \lim_{t\to 0^+} t^2\QQ_{\lambda}(t)$ is finite and $\geq \mathbb{I}$.

For Theorem~\ref{t:main3}, we need the Laurent polynomial of $\QQ_\lambda(t) = \frac{d}{dt}\left[S(t)^{-1}\right]_\square$ at order $2$. That is, we need $S(t)^{-1}$ at order $N=3$. According to the discussion above, this requires the solution of Cauchy problem~\eqref{eq:Cauchyp} at order $N+6 = 9$. This is achieved by long computations (made easier by the properties of $C_1,C_2$: $C_1^2 =0$, $C_2^2 = C_2$, $C_1C_2 = C_1$ and $C_2 C_1 = 0$). One checks that 
\begin{equation}
\frac{d}{dt}\left[S(t)^{-1}\right]_\square = \frac{1}{t^2} \Qz_\lambda + \QQ_\lambda^{(0)} + t\QQ_\lambda^{(1)}+t^2 \QQ_\lambda^{(2)} + O(t^3),
\end{equation}
where $\Qz_\lam$ was computed in Eq.~\eqref{eq:principal} while
\begin{equation}
\mathcal{Q}_\lambda^{(0)} = \begin{pmatrix}
 \frac{2}{15} R_{bb} & \frac{1}{12} R_{bc} \\[0.1cm]
 \frac{1}{12} R_{cb} & \frac{1}{3} R_{cc} \\
\end{pmatrix},
\end{equation}
\begin{equation}
\mathcal{Q}_\lambda^{(1)} = \begin{pmatrix}
 \frac{1}{15} \dot R_{bb} & \frac{1}{10} R_{ac}-\frac{1}{30} \dot R_{bc} \\[0.1cm]
 \frac{1}{10} R_{ca}-\frac{1}{30} \dot R_{cb} & \frac{1}{6} \dot R_{cc} \\
\end{pmatrix},
\end{equation}
\begin{equation}
\mathcal{Q}_\lambda^{(2)} = \frac{1}{240}\begin{pmatrix}
 \frac{1}{35} \left(240 R_{aa}+44 R_{bb}^2+65 R_{bc} R_{cb}+240 \ddot R_{bb} \right) & R_{bb} R_{bc}-2  R_{bc} R_{cc}+12 \dot R_{ac}-6 \ddot R_{bc} \\
 R_{cb}  R_{bb} -2 R_{cc} R_{cb}+12 \dot R_{ca}-6 \ddot R_{cb} & 16 R_{cc}^2+R_{bc} R_{cb}+12 \ddot R_{cc}
\end{pmatrix}.
\end{equation}
where we labeled the indices of $R(t)$ as in~\eqref{eq:grouppati}, that is $a= \{0\}$, $b=\{1\}$ and $c=\{2,\ldots,2d\}$.

This computes the asymptotics of $\QQ_\lambda(t)$ in terms of the orthonormal frame $f_1(0),\ldots,f_{2d}(0)$ obtained as a projection of the canonical frame. By definition (see Sec.~\ref{s:invariantspaces}) $S^b_{\gamma(0)} = \spn\{f_1(0)\}$ and $S^c_{\gamma(0)} = \spn\{f_2(0),\ldots,f_{2d}(0)\}$ and thus concludes the proofs of statements (i)-(ii) of Theorem~\ref{t:main2} and Theorem~\ref{t:main3}. The explicit form of $S^b_{\gamma(0)}$ and $S^c_{\gamma(0)}$ follows from computation of the canonical frame in Theorem~\ref{t:cansplitting}.
\end{proof}

\subsection{Reparametrization and homogeneity} 
The operators $\Qz_{\lam}$ and $\QQ^{(i)}_{\lam}$ for $i\geq0$, are well defined for each nontrivial geodesic on the contact manifold $M$, i.e. for each covector $\lam\in T_{x}^{*}M$ such that $H(\lambda)\neq 0$.  

By homogeneity of the Hamiltonian, if $H(\lambda) \neq 0$, then also $H(\alpha\lambda) = \alpha^2 H(\lambda) \neq 0$ for $\alpha \neq 0$.

\begin{proposition}
For all $\lambda$ such that $H(\lambda) \neq 0$, the operators $\Qz_{\lam},\QQ^{(i)}_{\lam} : \distr_x \to \distr_{x}$ satisfy the following homogeneity properties:
\begin{equation}
\Qz_{\al\lam}=\Qz_{\lam},\qquad \QQ^{(i)}_{\al\lam}=\alpha^{2+i} \QQ^{(i)}_{\lam}. \qquad i \in \N,\, \alpha > 0.
\end{equation}
\end{proposition}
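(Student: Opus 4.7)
The plan is to reduce the statement to a single scaling identity for the family $\QQ_\lambda(t)$ itself, and then read off the homogeneity of each Laurent coefficient by matching powers of $t$.

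The starting point is the homogeneity of the Hamiltonian: $H(\alpha\lambda) = \alpha^2 H(\lambda)$ gives $\vec H(\alpha\lambda) = \alpha \, (\delta_\alpha)_* \vec H(\lambda)$, where $\delta_\alpha$ denotes fiberwise dilation on $T^*M$, so that the extremal flow satisfies $e^{t\vec H}(\alpha \lambda) = \alpha\, e^{\alpha t\vec H}(\lambda)$. Projecting to $M$, the geodesic with initial covector $\alpha\lambda$ is simply a time reparametrization of the one with initial covector $\lambda$:
\begin{equation}
\gamma_{\alpha\lambda}(t) = \gamma_\lambda(\alpha t), \qquad \alpha>0,\ t\geq 0.
\end{equation}
Plugging this into the definition of the geodesic cost,
\begin{equation}
c_t^{\alpha\lambda}(x) = -\frac{1}{2t}\dist^2(x,\gamma_\lambda(\alpha t)) = \alpha\, c_{\alpha t}^{\lambda}(x),
\end{equation}
so differentiating in $t$ yields $\dot c_t^{\alpha\lambda} = \alpha^2 \dot c_{\alpha t}^\lambda$. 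Both sides have a critical point at $x_0$, hence taking the second differential and restricting to $\distr_{x_0}$ produces the scaling identity
\begin{equation}
\QQ_{\alpha\lambda}(t) = \alpha^2\, \QQ_{\lambda}(\alpha t), \qquad t>0.
\end{equation}

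Now I would just match Laurent coefficients. By Theorem~\ref{t:main} both sides admit the expansion~\eqref{eq:mainexp}; writing it out gives
\begin{equation}
\frac{1}{t^2}\Qz_{\alpha\lambda} + \sum_{i\geq 0}\QQ^{(i)}_{\alpha\lambda}\, t^i = \frac{1}{t^2}\Qz_{\lambda} + \sum_{i\geq 0} \alpha^{2+i}\QQ^{(i)}_{\lambda}\, t^i,
\end{equation}
and the uniqueness of the Laurent expansion yields $\Qz_{\alpha\lambda} = \Qz_\lambda$ and $\QQ^{(i)}_{\alpha\lambda} = \alpha^{2+i}\QQ^{(i)}_\lambda$, which is the claim.

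There is no real obstacle: the only point worth double-checking is that the base point $x_0 = \gamma_\lambda(0) = \gamma_{\alpha\lambda}(0)$ is common to both cost functions (so that the second differentials live on the same space $\distr_{x_0}$), and that the identification of $\QQ_\lambda(t)$ with the restricted quadratic form is compatible with the scalar product on $\distr_{x_0}$, which does not depend on $\lambda$. Both are immediate from the definitions.
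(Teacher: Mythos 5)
Your proposal is correct and follows essentially the same route as the paper: both derive the reparametrization identity $c_t^{\alpha\lambda}=\alpha\, c_{\alpha t}^{\lambda}$ from homogeneity of the Hamiltonian, deduce $\QQ_{\alpha\lambda}(t)=\alpha^2\QQ_{\lambda}(\alpha t)$, and then match Laurent coefficients using Theorem~\ref{t:main}. The extra remarks about the common base point and the $\lambda$-independence of the scalar product are fine but, as you say, immediate.
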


\begin{proof} Here we denote by $x\mapsto c_{t}^{\lam}(x)$ the geodesic cost function associated with $\lam\in T^{*}_{x}M$. For $\al > 0$, if $\Exp_x(t\lam)=\gamma(t)$ then $\Exp_x(t\al \lam)=\Exp_x(\al t \lam)=\gamma(\al t)$. Hence
\begin{equation}
\cc^{\alpha\lam}_{t}(x)=-\frac{1}{2t}\dist^{2}(x,\gamma(\al t))=-\frac{\al}{2\al t}\dist^{2}(x,\gamma(\al t))=\al  \cc^{\lam}_{\al t}(x).
\end{equation}
This implies that $\dot{c}_t^{\alpha\lambda} = \alpha^2 \dot{c}_{\alpha t}^\lambda$ and $d^{2}_{x}\dot{c}_t^{\alpha\lambda} = \alpha^2 d^{2}_{x}\dot{c}_{\alpha t}^\lambda$. If we restrict these quadratic forms to the distribution $\distr_{x}$, we get the identity
\begin{equation} \label{eq:qhomog}
\QQ_{\al\lam}(t)=\al^{2}\QQ_{\lam}(\al t).
\end{equation} 
Applying the expansion of Theorem \ref{t:main} to \eqref{eq:qhomog} we get
\begin{equation} 
\frac{1}{ t^2}\Qz_{\al\lam}+ \sum_{i=0}^{m}\QQ^{(i)}_{\al\lam} t^{i}+O(t^{m+1})=\al^{2}\left(\frac{1}{\al^{2} t^2}\Qz_{\lam} + \sum_{i=0}^{m}\al^{i}\QQ^{(i)}_\lam t^{i}+O(t^{m+1})\right),
\end{equation}
which gives the desired identities.
\end{proof}

\section{Bonnet-Myers type theorems}\label{s:bm}

In \cite{BR-comparison}, under suitable bounds on the canonical curvature of a given sub-Riemannian geodesic, we obtained bounds on the first conjugate time. Then, with global conditions on $\mathfrak{R}$, one obtains Bonnet-Myers type results. We first specify the results of \cite[Sec. 6]{BR-comparison} to contact structures (in the contact case all the necessary hypotheses apply). Then, using the results of Sec.~\ref{s:computations}, we express them in terms of known tensors (Tanno's tensor, curvature and torsion). 

\begin{theorem}\label{t:bonnetmyers2}
Assume that for every unit speed geodesic $\gamma(t)$ of a complete contact sub-Rie\-mannian structure of dimension $2d+1$, with $d >1$, we have
\begin{equation}\label{eq:bound}
\mathfrak{Ric}^{c}_{\gamma(t)}\geq (2d-2)\kappa_{c}, \qquad \forall t \geq 0,
\end{equation}
for some $\kappa_{c}>0$. Then 
the manifold is compact, with sub-Riemannian diameter not greater than $\pi/\sqrt{\kappa_{c}}$, and the fundamental group is finite.
\end{theorem}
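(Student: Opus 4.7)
The plan is to apply the sub-Riemannian Riccati comparison machinery of \cite[Sec.~6]{BR-comparison} along an arbitrary unit-speed geodesic, obtain a bound on the first conjugate time, and then conclude by classical Bonnet--Myers-type arguments.

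First I would fix an arbitrary unit-speed geodesic $\gamma(t)$ with extremal $\lambda(t) = e^{t\vec{H}}(\lambda_0)$ and adopt the canonical Darboux frame $\{E_\mu(t), F_\mu(t)\}_{\mu=0}^{2d}$ of Proposition~\ref{p:canonical}, whose projection produces the splitting $T_{\gamma(t)}M = S^a_{\gamma(t)} \oplus S^b_{\gamma(t)} \oplus S^c_{\gamma(t)}$. In this frame the sub-Riemannian Jacobi equation takes the normal form~\eqref{eq:Jacobicoord} with the constant matrices $C_1, C_2$ and the curvature matrix $R(t)$ decomposed into blocks as in~\eqref{eq:grouppati}. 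A Jacobi field vanishing at $\gamma(0)$ corresponds to a solution $(p(t),q(t))$ of~\eqref{eq:Jacobicoord} with $q(0)=0$, and the first conjugate time $t_*(\gamma)$ is the first positive time at which a nontrivial such solution again satisfies $q(t)=0$.

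Second, I would pass to the matrix Riccati form: letting $P(t),Q(t)$ be the $(2d+1)\times(2d+1)$ fundamental matrices of~\eqref{eq:Jacobicoord}, the solution $V(t) := P(t)Q(t)^{-1}$ satisfies a matrix Riccati equation driven by $R(t)$, $C_1$, $C_2$, and $t_*(\gamma)$ is exactly its first positive blow-up time. The comparison theorem of \cite[Sec.~6]{BR-comparison} asserts that, in the contact case, the nilpotent structure of $C_1$ and the idempotent structure of $C_2$ allow one to reduce the control of the blow-up of $V(t)$ to a scalar Riccati inequality driven by $\trace R_{cc}(t) = \Riccan^c_{\gamma(t)}$ on the $(2d-1)$-dimensional block $S^c$. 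Using the Cauchy--Schwarz bound $\trace(V_{cc}^2) \geq (\trace V_{cc})^2/(2d-1)$ together with the hypothesis $\Riccan^c_{\gamma(t)} \geq (2d-2)\kappa_c$, the resulting scalar Riccati is dominated by
\begin{equation}
\dot u = \frac{u^2}{2d-1} + (2d-2)\kappa_c,
\end{equation}
whose solution blows up in time $\pi/\sqrt{\kappa_c}$. This forces $t_*(\gamma) \leq \pi/\sqrt{\kappa_c}$ for every unit-speed geodesic.

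Third and finally, since geodesics past their first conjugate time are no longer length-minimizing, the sub-Riemannian diameter is at most $\pi/\sqrt{\kappa_c}$. Completeness together with a bounded diameter imply compactness of $M$ via the sub-Riemannian Hopf--Rinow theorem (see \cite{nostrolibro}). For the statement on the fundamental group, lift the contact sub-Riemannian structure to the universal cover $\widetilde M$: the lifted structure is still contact, still complete, and the canonical curvature bound is unchanged along every lifted geodesic, so $\widetilde M$ is compact by the same argument. Hence the covering $\widetilde M \to M$ has finite fibres, i.e.\ $\pi_1(M)$ is finite.

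The main obstacle is the second step, namely justifying that a trace bound on the $cc$-block alone suffices to force Riccati blow-up, despite the full matrix Riccati involving also the $aa, bb$ blocks and the off-diagonal couplings. This is exactly the contact specialization of the general sub-Riemannian Riccati comparison of \cite{BR-comparison}, where the algebraic structure of $C_1, C_2$ is used to show that the $a,b$-couplings do not obstruct the scalar inequality on $\trace V_{cc}$. Once this comparison is granted, the rest of the proof is the standard Bonnet--Myers--Hopf--Rinow argument.
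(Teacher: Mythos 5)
Your overall architecture (Riccati comparison on the $cc$-block, Hopf--Rinow, universal cover) is the right one, and it matches the paper's route, which is simply to invoke the comparison machinery of \cite[Sec.~6]{BR-comparison} after checking that its hypotheses hold in the contact case. However, your second step contains a genuine quantitative gap. The scalar inequality you derive from Cauchy--Schwarz on the full $(2d-1)$-dimensional block $S^c$, namely $\dot u + u^2/(2d-1) + (2d-2)\kappa_c \le 0$ with $u=\trace V_{cc}$, does \emph{not} force blow-up by time $\pi/\sqrt{\kappa_c}$: substituting $u=(2d-1)w$ gives $\dot w + w^2 + \tfrac{2d-2}{2d-1}\kappa_c \le 0$, whose comparison solution $w=k\cot(kt)$ with $k=\sqrt{\tfrac{2d-2}{2d-1}\kappa_c}$ reaches $-\infty$ only at $t=\pi\sqrt{(2d-1)/((2d-2)\kappa_c)}$, which is strictly larger than $\pi/\sqrt{\kappa_c}$ for $d>1$. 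As written, your argument therefore proves only a weaker diameter bound, and the weaker bound is not enough: the Hopf fibration (Example~\ref{ex:hopf}) shows $\pi/\sqrt{\kappa_c}$ is attained, so no slack is available.

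The missing ingredient is the reduction by the direction of motion. The tangent vector satisfies $\tanf\in S^c_{\gamma(t)}$ and $\Rcan_{\gamma(t)}\tanf=0$ (see the remark following Theorem~\ref{t:curvQneq0}), exactly as in the Riemannian Bonnet--Myers theorem where one works on $\dot\gamma^\perp$. Consequently $\Riccan^c_{\gamma(t)}=\trace R_{cc}(t)$ is in fact a trace over the $(2d-2)$-dimensional subspace $S^c_{\gamma(t)}\cap\tanf^\perp$, the Jacobi/Riccati system splits off the trivial solution in the $\tanf$-direction, and the Cauchy--Schwarz step must be performed on the reduced $(2d-2)\times(2d-2)$ block. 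This yields $\dot u + u^2/(2d-2) + (2d-2)\kappa_c\le 0$, i.e.\ $\dot w + w^2 + \kappa_c\le 0$ with $u=(2d-2)w$, whose blow-up time is exactly $\pi/\sqrt{\kappa_c}$. This is precisely why the hypothesis is normalized by $2d-2$ rather than $2d-1$. You should also record, as you implicitly do, that the off-diagonal couplings only help: since $C_2=\mathrm{diag}(0,1,\mathbb{I}_{2d-1})$ and $V$ is symmetric, $(VC_2V)_{cc}=V_{cb}V_{bc}+V_{cc}^2\ge V_{cc}^2$, so the sub-block Riccati inequality holds in the positive-semidefinite sense; but without the reduction by $\tanf$ the constant cannot be recovered. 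The remaining steps (loss of minimality past the first conjugate time, compactness via completeness and bounded diameter, finiteness of $\pi_1$ via the universal cover) are standard and correct.
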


\begin{theorem}\label{t:bm-general}
Assume that for every unit speed geodesic $\gamma(t)$ of a complete contact sub-Rie\-mannian structure of dimension $2d+1$, with $d \geq 1$, we have
\begin{equation}
\mathfrak{Ric}_{\g(t)}^{a} \geq \kappa_{a},\quad  \mathfrak{Ric}_{\g(t)}^{b} \geq \kappa_{b}, \qquad \forall t \geq 0,
\end{equation}
for some $\kappa_a,\kappa_b \in \R$ such that
\begin{equation}\label{eq:3dcond}
\begin{cases}
\kappa_{b} > 0, \\ 4\kappa_{a} > -\kappa_{b}^2,
\end{cases}
\qquad \text{or} \qquad
\begin{cases}
\kappa_{b} \leq 0, \\ \kappa_{a} >0.
\end{cases}
\end{equation}
Then the manifold is compact, with sub-Riemannian diameter not greater than $t_*(\kappa_a,\kappa_b)<+\infty$, and the fundamental group is finite.
\end{theorem}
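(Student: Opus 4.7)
My plan is to deduce Theorem~\ref{t:bm-general} from the general Riccati comparison machinery of \cite[Sec. 6]{BR-comparison}, specialized to the contact setting where the normal form $(C_1, C_2)$ of Proposition~\ref{p:canonical} is fixed. The strategy has three parts: (i) obtain a uniform upper bound on the first conjugate time along every unit speed geodesic, (ii) promote this to a diameter bound via Hopf--Rinow, and (iii) lift to the universal cover to conclude finiteness of $\pi_1(M)$.

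The key step is (i). Given the canonical frame along a unit speed extremal, the Jacobi system~\eqref{eq:struct1}--\eqref{eq:struct2} is driven by the fixed pair $(C_1, C_2)$ of Proposition~\ref{p:canonical} and by the symmetric curvature matrix $R(t)$ with block decomposition~\eqref{eq:grouppati}. Setting $S(t) := F(t) M(t)^{-1}$ for the matrix solution $M(t)$ satisfying $M(0) = 0$, $\dot{M}(0) = C_2$, one obtains a matrix Riccati equation of the form $\dot{S} + S C_2 S + C_1^{*} S + S C_1 + R(t) = 0$, and the first conjugate time is the blow-up time of $S(t)$. Under the hypotheses $R_{aa}(t) = \mathfrak{Ric}^{a}_{\gamma(t)} \geq \kappa_a$ and $R_{bb}(t) = \mathfrak{Ric}^{b}_{\gamma(t)} \geq \kappa_b$ (recall that $S^a$ and $S^b$ are one-dimensional), the comparison principle of \cite{BR-comparison} reduces the blow-up analysis of the full Riccati flow to the blow-up analysis of a model two-dimensional constant-coefficient Riccati equation on the $(a,b)$-block, where $C_1$ couples the two directions through its only non-trivial entry. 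The conditions~\eqref{eq:3dcond} are precisely the algebraic conditions under which this $2\times 2$ model problem has a finite blow-up time $t_{*}(\kappa_a,\kappa_b)$: the case $\kappa_b > 0,\ 4\kappa_a > -\kappa_b^{2}$ corresponds to a model matrix with eigenvalues forcing oscillation, while $\kappa_b \leq 0,\ \kappa_a > 0$ is a degenerate regime that can be handled separately by a direct integration. The $c$-block requires no hypothesis because the entry of $C_1$ coupling $a$ and $b$ is independent of the $c$-directions, so the blow-up is driven entirely by the $(a,b)$-sector.

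Once (i) is established, step (ii) is routine. Completeness of $(M,\dist)$ and the absence of non-trivial abnormals imply, via the sub-Riemannian Hopf--Rinow theorem, that any two points are joined by a minimizing geodesic. Since no such geodesic can be length-minimizing past its first conjugate time, we obtain $\operatorname{diam}(M,\dist) \leq t_{*}(\kappa_a,\kappa_b)$, and completeness then forces $M$ to be compact. For step (iii), the universal Riemannian cover $\widetilde{M} \to M$ inherits the contact sub-Riemannian structure by pulling back $(\omega, g)$; the canonical frame, curvature and Ricci invariants are local, hence the hypotheses transfer to $\widetilde{M}$. The same argument yields $\operatorname{diam}(\widetilde{M}) \leq t_{*}(\kappa_a,\kappa_b)$, so $\widetilde{M}$ is compact and the covering has finite degree, i.e.\ $\pi_1(M)$ is finite.

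The main obstacle I anticipate is pinpointing the explicit scalar blow-up time $t_{*}(\kappa_a,\kappa_b)$ and verifying that the comparison reduction to the $(a,b)$-block is clean enough that no additional hypothesis on $R_{ab}$, $R_{ac}$, $R_{bc}$, or $R_{cc}$ is needed. This is exactly where the specific nilpotent structure of $C_1$ and the idempotent structure of $C_2$ in Proposition~\ref{p:canonical} (the identities $C_1^2 = 0$, $C_2^2 = C_2$, $C_1 C_2 = C_1$, $C_2 C_1 = 0$) enter decisively: they allow one to split the Riccati flow along the invariant subspaces $S^a \oplus S^b$ and $S^c$ and to bound each sector by a scalar or low-dimensional model. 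The two alternatives in~\eqref{eq:3dcond} reflect the two regimes of the discriminant of the resulting scalar problem.
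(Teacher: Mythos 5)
Your proposal follows essentially the same route as the paper, which establishes Theorem~\ref{t:bm-general} by specializing the Riccati comparison machinery of \cite{BR-comparison} to the contact normal form of Proposition~\ref{p:canonical} and identifying condition~\eqref{eq:3dcond} with the existence of a finite conjugate time for the associated LQ model (cf.\ Remark~\ref{rmk:tstar} and \cite{ARS-LQ}). Your reconstruction of that cited argument --- reduction to the $(a,b)$-block of the Riccati flow using the block structure of $C_1,C_2$ so that no hypothesis on the $c$-sector is needed, then Hopf--Rinow for the diameter bound and passage to the universal cover for the finiteness of $\pi_1$ --- is the intended one.
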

\begin{remark}\label{rmk:tstar}
The value of $t_*(\kappa_a,\kappa_b)$ in Theorem~\ref{t:bm-general} is the first conjugate time of a particular variational problem (a so-called \emph{LQ optimal control problem}) which, in the framework of \cite{BR-comparison}, plays the role of a constant curvature model. These are \emph{synthetic models}, that are not sub-Riemannian manifolds but on which the concept of conjugate time $t_*$ is well defined. It can be explicitly computed as the smallest positive root of a transcendental equation. In this specific case, $t_*(\kappa_a,\kappa_b)$ is the first positive solution of:
\begin{equation}\label{eq:positiveroot}
\frac{4\kappa_a[1-\cos(\lambda_+ t)\cosh(\lambda_- t)] - 2\kappa_b \lambda_+ \lambda_- \sin(t\lambda_+) \sinh(t\lambda_-)}{2\kappa_a(4\kappa_a + \kappa_b^2)}=0, \quad \lambda_{\pm} = \sqrt{\tfrac{\pm \kappa_b + \sqrt{4\kappa_a +\kappa_b^2}}{2}}.
\end{equation}
This equation admits positive roots (actually, an infinite number of them) precisely if and only if \eqref{eq:3dcond} are satisfied. This result, that already in this simple case is quite cumbersome to check, comes from an general theorem on the existence of conjugate times for LQ optimal control problems, obtained in \cite{ARS-LQ}. For $\kappa_a =0$, $\kappa_b > 0$,~\eqref{eq:positiveroot} must be taken in the limit $\kappa_a \to 0$ and reduces to $2 - 2 \cos(\sqrt{\kappa_b} t) - \sin(\sqrt{\kappa_b}t) = 0$ and $t_*(0,\kappa_b) = 2\pi /\sqrt{\kappa_b}$.
\end{remark}
With the explicit expressions for the canonical curvature of Theorem~\ref{t:curvQneq0}, we obtain, from Theorem~\ref{t:bonnetmyers2}, the following result.
\begin{theorem}\label{t:bmym}
Consider a complete, contact structure of dimension $2d+1$, with $d > 1$. Assume that there exist constants $\kappa_{1} > \kappa_{2}\geq 0$ such that, for any horizontal unit vector $X$
\begin{equation}\label{eq:bmYM}
\Ric(X) - R(X,JX,JX,X) \geq (2d-2)\kappa_{1},\qquad \|Q(X,X)\|^{2}\leq (2d-2)\kappa_{2}.
\end{equation}
Then the manifold is compact with sub-Riemannian diameter not greater than $\pi/\sqrt{\kappa_{1}-\kappa_{2}}$, and the fundamental group is finite.
\end{theorem}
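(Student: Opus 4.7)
The plan is to deduce Theorem~\ref{t:bmym} from Theorem~\ref{t:bonnetmyers2} by verifying that the classical tensorial bounds \eqref{eq:bmYM} imply a lower bound on the canonical Ricci curvature $\mathfrak{Ric}^{c}_{\gamma(t)}$ along every unit speed geodesic.

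First, I would invoke the explicit formulas for the canonical curvature operator in terms of Tanno connection data, provided by Theorem~\ref{t:curvQneq0} (referenced in Section~\ref{s:computations}). These formulas express $\Rcan^{cc}_{\gamma(t)} : S^c_{\gamma(t)} \to S^c_{\gamma(t)}$ as a combination of the Tanno curvature $R$, the contact endomorphism $J$ and the Tanno tensor $Q$, all evaluated along the extremal. Taking the partial trace over the $(2d-1)$-dimensional subspace $S^c_{\gamma(t)} = J\dot\gamma^\perp \cap \distr_{\gamma(t)}$, I expect a decomposition of the schematic form
\begin{equation}
\mathfrak{Ric}^{c}_{\gamma(t)} = \bigl(\Ric(\dot\gamma) - R(\dot\gamma,J\dot\gamma,J\dot\gamma,\dot\gamma)\bigr) - \|Q(\dot\gamma,\dot\gamma)\|^{2} \cdot (\text{positive coefficient}),
\end{equation}
where the first bracket is the horizontal Ricci-type contraction appearing in \eqref{eq:bmYM} and the second comes from the quadratic contribution of the Tanno tensor to the canonical curvature (this quadratic structure is characteristic of how $Q$ enters $\Rcan$ in the non-CR case). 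The precise constants would be extracted by computing the trace over an orthonormal basis of $S^c_{\gamma(t)}$, using that $\dot\gamma \in S^c_{\gamma(t)}$ is a unit horizontal vector and $J\dot\gamma$ spans $S^b_{\gamma(t)}$, which is \emph{not} included in the trace.

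Second, I would substitute the hypotheses: the first inequality in \eqref{eq:bmYM} provides a lower bound for the curvature contribution, while the second bounds from above the Tanno-tensor contribution. Combining them yields
\begin{equation}
\mathfrak{Ric}^{c}_{\gamma(t)} \geq (2d-2)(\kappa_1 - \kappa_2),
\end{equation}
with $\kappa_1 - \kappa_2 > 0$ by assumption. This is exactly the hypothesis \eqref{eq:bound} of Theorem~\ref{t:bonnetmyers2} with $\kappa_c = \kappa_1 - \kappa_2$, and since the structure is complete of dimension $2d+1$ with $d > 1$, Theorem~\ref{t:bonnetmyers2} applies and gives compactness, the diameter bound $\pi/\sqrt{\kappa_1 - \kappa_2}$, and finiteness of the fundamental group (the last via lifting to the universal cover, which inherits the same curvature bounds).

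The main obstacle is checking that the coefficient in front of $\|Q(\dot\gamma,\dot\gamma)\|^{2}$ in the partial trace is indeed $1$ (so that the normalizations in \eqref{eq:bmYM} match exactly), and that no spurious cross-terms involving the torsion $\tau$ survive after tracing over $S^c_{\gamma(t)}$. Both points must be read off carefully from Theorem~\ref{t:curvQneq0}: the symmetry $\tau\circ J + J\circ\tau = 0$ and the skew-symmetry of $J$ should force cancellations in the trace, leaving only the two terms in \eqref{eq:bmYM}. Once this bookkeeping is verified, the reduction to Theorem~\ref{t:bonnetmyers2} is immediate.
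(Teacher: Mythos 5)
Your proposal is correct and follows essentially the same route as the paper: the paper's proof simply quotes the trace formula $\Riccan^c_{\gamma(t)} = \Ric(\tanf) - R(\tanf,J\tanf,J\tanf,\tanf) + \tfrac{1}{4}h_0^2(2d-2) - \|Q(\tanf,\tanf)\|^2$ from Theorem~\ref{t:curvQneq0}, bounds it below by $(2d-2)(\kappa_1-\kappa_2)$ using \eqref{eq:bmYM} (the extra $\tfrac{1}{4}h_0^2(2d-2)$ term is nonnegative and only helps), and applies Theorem~\ref{t:bonnetmyers2}. The bookkeeping you flag does work out: the coefficient of $\|Q(\tanf,\tanf)\|^2$ is exactly $-1$ and no torsion terms survive the partial trace.
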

\begin{proof}
From the results of Theorem~\ref{t:curvQneq0}, we have for any unit speed geodesic $\gamma(t)$
\begin{equation}
\Riccan^c_{\gamma(t)} = \Ric(\tanf) - R(\tanf,J\tanf,J\tanf,\tanf) + \frac{1}{4}h_0^2 (2d-2) - \|Q(\tanf,\tanf)\|^2,
\end{equation}
where $\tanf = \dot{\gamma}(t)$. Under our assumptions
\begin{equation}
\Riccan^c_{\gamma(t)} \geq (2d-2)(\kappa_1-\kappa_2), \qquad \all t \geq 0.
\end{equation}
The result then follows from Theorem~\ref{t:bonnetmyers2}.
\end{proof}
\begin{remark}
The Ricci tensor appearing in Eq.~\eqref{eq:bmYM} is the trace of Tanno's curvature, that is 
\begin{equation}
\Ric(X) = \sum_{i=1}^{2d}R(X,X_i,X_i,X) + R(X,X_0,X_0,X),
\end{equation}
for any orthonormal basis $X_1,\ldots,X_{2d}$ of the sub-Riemannian structure. One can check that $R(X,X_0,X_0,X)=0$ (since $\nabla X_0 = 0$ for Tanno connection). Therefore the l.h.s. of the first formula of Eq.~\eqref{eq:bmYM} is the partial trace on the $2d-2$ dimensional subspace $\spn\{X \oplus JX\}^\perp \cap \distr$.
\end{remark}
We directly obtain the following corollary for strongly pseudo-convex CR manifolds (that is, for $Q = 0$). Notice that this condition is strictly weaker than Sasakian.

\begin{corollary}\label{c:afinale}
Consider a complete, strongly pseudo-convex CR structure of dimension $2d+1$, with $d>1$, such that, for any horizontal unit vector $X$
\begin{equation}
\Ric(X) - R(X,JX,JX,X) \geq (2d-2)\kappa > 0.
\end{equation}
Then the manifold is compact with sub-Riemannian diameter not greater than $\pi/\sqrt{\kappa}$, and the fundamental group is finite.
\end{corollary}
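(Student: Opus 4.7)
The plan is to derive this as an immediate specialization of Theorem~\ref{t:bmym}. The key observation, already emphasized in Section~\ref{s:contact}, is Tanno's characterization: a contact metric structure $(M,\omega,g,J)$ is a strongly pseudo-convex CR manifold if and only if the Tanno tensor $Q$ vanishes identically. Hence, under the hypothesis of the corollary, $Q \equiv 0$ on $M$.

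With $Q = 0$, the norm bound on Tanno's tensor in~\eqref{eq:bmYM} is trivially satisfied with $\kappa_2 = 0$, since for every horizontal unit vector $X$ we have $\|Q(X,X)\|^2 = 0 \leq (2d-2)\cdot 0$. Setting $\kappa_1 = \kappa > 0 = \kappa_2$, the strict inequality $\kappa_1 > \kappa_2 \geq 0$ required by Theorem~\ref{t:bmym} holds. The curvature assumption of the corollary is then exactly the first inequality in~\eqref{eq:bmYM}.

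Applying Theorem~\ref{t:bmym} directly yields that $M$ is compact with sub-Riemannian diameter bounded by $\pi/\sqrt{\kappa_1 - \kappa_2} = \pi/\sqrt{\kappa}$, and with finite fundamental group. No new computation is required; the only subtlety is to invoke the Tanno characterization of the CR condition (note that $\nabla$ here is the Tanno connection, which on a CR manifold coincides with the Tanaka-Webster connection, so the Ricci and curvature tensors in the hypothesis are the classical Tanaka-Webster ones). There is no genuine obstacle, since all the work has been done upstream in Theorem~\ref{t:bmym}; this corollary simply records the cleaner statement available when the CR obstruction $Q$ is absent.
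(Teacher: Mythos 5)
Your proposal is correct and matches the paper's own (implicit) argument exactly: the paper derives Corollary~\ref{c:afinale} directly from Theorem~\ref{t:bmym} by using that the strongly pseudo-convex CR condition is equivalent to $Q\equiv 0$, so one may take $\kappa_2=0$ and $\kappa_1=\kappa$ in~\eqref{eq:bmYM}. Nothing is missing.
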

\begin{example}\label{ex:hopf}
The classical example is the Hopf fibration 
$
S^1 \hookrightarrow \mathbb{S}^{2d+1} \xrightarrow{\pi} \mathbb{CP}^{d},
$
where the sub-Riemannian structure on $\mathbb{S}^{2n+1}$ is given by the restriction of the round metric to the orthogonal complement $\distr_x := (\ker d_x \pi)^\bot$. In this case $Q=0$ and Corollary \ref{c:afinale} applies with $\kappa=1$. The bound on the sub-Riemannian diameter (equal to $\pi$) is then sharp (see also \cite{LLZ-Sasakian}). 
\end{example}


In the 3D case (i.e. $d=1$) $Q=0$ automatically and $\Ric(X) = R(X,JX,JX,X)$. Then Theorem~\ref{t:bmym} never applies as $\kappa_1 =\kappa_2 = 0$. One can still obtain a Bonnet-Myers result from Theorem~\ref{t:bm-general}. However, as one can see from Theorem~\ref{t:curvQ=0}, the explicit statement in terms of standard tensors results in complicated expressions. This simplifies for Sasakian structures (in any dimension), where also $\tau =0$. In this case $\mathfrak{Ric}^a=0$, we apply Theorem~\ref{t:bm-general} and we obtain the following result (already proved in \cite{AAPL} for the 3D case and \cite{LLZ-Sasakian} for the general case).
\begin{corollary}
Consider a complete, Sasakian structure of dimension $2d+1$, with $d\geq 1$, such that, for any horizontal unit vector $X$
\begin{equation}
R(X,JX,JX,X) \geq \kappa > 0.
\end{equation}
Then the manifold is compact with sub-Riemannian diameter not greater than $2\pi/\sqrt{\kappa}$, and the fundamental group is finite.
\end{corollary}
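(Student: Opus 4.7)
The natural plan is to specialize Theorem~\ref{t:bm-general} to the Sasakian case, using the explicit form of the canonical curvature when $Q=0$ and $\tau=0$. First I would fix an arbitrary unit-speed geodesic $\gamma(t)$ with extremal $\lambda(t)$ and look at the $1\times 1$ blocks $R_{aa}(t)$ and $R_{bb}(t)$ of the canonical curvature operator along $\gamma$: by definition $\mathfrak{Ric}^a_{\gamma(t)} = R_{aa}(t)$ and $\mathfrak{Ric}^b_{\gamma(t)} = R_{bb}(t)$. The canonical splitting from Theorem~\ref{t:cansplitting} collapses nicely in the Sasakian setting, because $Q(\dot\gamma,\dot\gamma)=0$ gives $S^a_{\gamma(t)} = \spn\{X_0 - h_0 \dot\gamma\}$ and $S^b_{\gamma(t)} = \spn\{J\dot\gamma\}$, and the computation of $\Rcan$ in Theorem~\ref{t:curvQ=0} becomes particularly simple because both Tanno's tensor and the pseudo-Hermitian torsion vanish.

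Next I would read off the two Ricci components from Theorem~\ref{t:curvQ=0} under these vanishings. The key facts are: because $\tau=0$, $X_0$ is a Killing vector field for the Riemannian extension and the contribution of $\tau$-dependent terms to $R_{aa}$ drops out, yielding $\mathfrak{Ric}^a_{\gamma(t)}=0$ along every unit-speed geodesic; and because $Q=0$, the expression for $R_{bb}$ reduces to the Tanno curvature of the holomorphic-type plane, so
\begin{equation}
\mathfrak{Ric}^b_{\gamma(t)} = R(\dot\gamma,J\dot\gamma,J\dot\gamma,\dot\gamma),
\end{equation}
which by hypothesis is $\geq \kappa > 0$ for all $t \geq 0$.

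With $\kappa_a = 0$ and $\kappa_b = \kappa > 0$ the pair $(\kappa_a,\kappa_b)$ lies in the first branch of the dichotomy~\eqref{eq:3dcond} (degenerately, in the boundary case $4\kappa_a = 0 > -\kappa_b^2$), so Theorem~\ref{t:bm-general} applies. Remark~\ref{rmk:tstar} explicitly identifies $t_*(0,\kappa)$ as the first positive root of $2 - 2\cos(\sqrt{\kappa} t) - \sin(\sqrt{\kappa}t)\cdot 0 = 0$ after passing to the limit $\kappa_a \to 0^+$ in~\eqref{eq:positiveroot}; I would just quote the resulting value $t_*(0,\kappa) = 2\pi/\sqrt{\kappa}$. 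Theorem~\ref{t:bm-general} then delivers compactness, the diameter bound $2\pi/\sqrt{\kappa}$, and finiteness of $\pi_1(M)$ in one stroke.

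The only real subtlety is verifying that the limiting value $t_*(0,\kappa) = 2\pi/\sqrt{\kappa}$ is correctly extracted from Remark~\ref{rmk:tstar}: one must carefully take $\kappa_a \to 0^+$ in~\eqref{eq:positiveroot} because the denominator $2\kappa_a(4\kappa_a + \kappa_b^2)$ vanishes, and the limiting transcendental equation is the one given explicitly in the remark. Everything else is a direct specialization of results already established in the paper, so no independent calculation is required beyond identifying the vanishing terms in Theorem~\ref{t:curvQ=0} under the Sasakian hypotheses $Q=0$ and $\tau=0$.
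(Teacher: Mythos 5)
Your proposal is correct and follows essentially the same route as the paper: specialize Theorem~\ref{t:curvQ=0} to $Q=0$, $\tau=0$ to get $\mathfrak{Ric}^a_{\gamma(t)}=0$ and the lower bound on $\mathfrak{Ric}^b_{\gamma(t)}$, then apply Theorem~\ref{t:bm-general} with $(\kappa_a,\kappa_b)=(0,\kappa)$ and quote $t_*(0,\kappa)=2\pi/\sqrt{\kappa}$ from Remark~\ref{rmk:tstar}. The only inaccuracy is your claimed equality $\mathfrak{Ric}^b_{\gamma(t)}=R(\dot\gamma,J\dot\gamma,J\dot\gamma,\dot\gamma)$: with $\tau=0$ the correct formula is $\mathfrak{Ric}^b_{\gamma(t)}=R(\dot\gamma,J\dot\gamma,J\dot\gamma,\dot\gamma)+h_0^2$, but since $h_0^2\geq 0$ the required inequality $\mathfrak{Ric}^b_{\gamma(t)}\geq\kappa$ still holds and the argument goes through unchanged.
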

For the Hopf fibration $R(X,JX,JX,X) = 4$ and we  still obtain the sharp bound on the sub-Riemannian diameter (equal to $\pi$), which agrees with Example~\ref{ex:hopf}.
\section{Computation of the canonical curvature}\label{s:computations}

We are ready to compute the canonical frame for an extremal $\lambda(t)$ of a contact sub-Rieman\-nian structure. Recall that $\dim M = 2d+1$. Fix a normalized covector such that $2H(\lambda) = 1$. Let $\gamma_\lambda(t)$ the associated unit speed geodesic, with tangent vector $\tanf =\dot\gamma_\lambda$. $X_0$ is the Reeb vector field and $h_0(\lambda) = \langle \lambda,X_0\rangle$.

\begin{theorem}[Canonical splitting]\label{t:cansplitting}
Let $\gamma_{\lambda}(t)$ be a unit speed geodesic of a contact sub-Rieman\-nian structure. Then the canonical splitting is
\begin{equation}
T_{\gamma(t)} M = S^a_{\gamma(t)} \oplus S^b_{\gamma(t)} \oplus S^c_{\gamma(t)},
\end{equation}
where, defining $X_a:= X_0 - 2Q(\tanf,\tanf)+h_0\tanf$ and $X_b:= -J\tanf$, we have:
\begin{align}
S^a_{\gamma(t)}& := \spn\{X_a\},  & \dim S^a_{\gamma(t)} & = 1, \\
S^b_{\gamma(t)}& := \spn\{X_b\},   & \dim S^b_{\gamma(t)} & = 1, \\
S^c_{\gamma(t)}& := J\tanf^\perp \cap \distr_{\gamma(t)},   & \dim S^c_{\gamma(t)} & = 2d-1.
\end{align}
where everything is computed along the geodesic. In particular, $\distr_{\gamma(t)} = S^b_{\gamma(t)} \oplus S^c_{\gamma(t)}$ and $\tanf|_{\gamma(t)} \in S^c_{\gamma(t)}$.
\end{theorem}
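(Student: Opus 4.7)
The plan is to exhibit, for the given extremal $\lambda(t) = e^{t\vec{H}}(\lambda)$, a Darboux moving frame $\{E_\mu(t),F_\mu(t)\}_{\mu=0}^{2d}$ on $T_{\lambda(t)}(T^*M)$ satisfying conditions (i)–(iii) of Proposition~\ref{p:canonical}; then the three subspaces of the theorem are read off as $S^\alpha_{\gamma(t)} = \pi_* S^\alpha$ where $S^\alpha$ is spanned by the corresponding block of $F_\mu(t)$'s, by the definitions in Sec.~\ref{s:invariantspaces}. Since the canonical frame is unique up to $O(2d-1)$-rotation in the $c$-block, identifying it amounts to a direct construction.

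First I would set up convenient data along $\gamma$: choose an orthonormal frame $X_1,\ldots,X_{2d}$ of $\distr$ along $\gamma$ with $X_1|_{\gamma(t)}=\tanf(t)$, $X_2|_{\gamma(t)} = J\tanf(t)$, and $X_3,\ldots,X_{2d}\in J\tanf^\perp\cap\distr_{\gamma(t)}$, obtained by $\nabla$-parallel transport of an initial choice (here $\nabla$ is Tanno's connection), and together with the Reeb field $X_0$ lift them to linear-on-fibers functions $h_\mu(\lambda):=\langle\lambda,X_\mu\rangle$ with Hamiltonian vector fields $\vec h_\mu$. Along $\lambda(t)$ one has $h_1\equiv 1$, $h_j\equiv 0$ for $j\ge 2$, and $h_0$ constant (since $X_0$ is a symmetry of $H$). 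The Poisson brackets $\{h_\mu,h_\nu\}$ are computable from $[X_\mu,X_\nu]$ using properties (i)–(v) of Tanno's connection; in particular, on horizontal fields, the torsion identity $T(X_i,X_j) = d\omega(X_i,X_j)X_0$ gives $\{h_i,h_j\} = -\langle\lambda,\nabla_{X_i}X_j-\nabla_{X_j}X_i\rangle + g(X_i,JX_j)\,h_0$, while $\{h_0,h_i\}$ involves $\tau = T(X_0,\cdot)$.

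Second I would write the ansatz: take the vertical fiber vectors $\partial_{h_\mu}$ as a first-guess for the $E_\mu$, then build $F_\mu$ from suitable combinations of $\vec{h}_\mu$ (horizontal lifts) plus vertical corrections, in order to satisfy the structural equations~\eqref{eq:struct1}–\eqref{eq:struct2} with the specific block matrices $C_1,C_2$ of Proposition~\ref{p:canonical}. The nilpotent structure of $C_1$ forces the hierarchy: $\dot E_b = -F_b + E_a$ (so $E_a$ is a ``secondary'' vertical direction obtained by applying $\vec H$ to the initial vertical piece corresponding to $X_1$), while $\dot E_a = 0$, $\dot E_{c_i}=-F_{c_i}$. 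Solving these recursively and imposing Darboux compatibility fixes $F_a,F_b,F_{c_i}$ uniquely up to the residual $O(2d-1)$ freedom. Projecting via $\pi_*$ kills the purely vertical contributions, leaving expressions in $\tanf,J\tanf,X_0,Q(\tanf,\tanf)$ and the $J\tanf^\perp\cap\distr$ directions. The Tanno tensor $Q$ appears precisely in the $\pi_*F_a$ term, through $\dot J\tanf = (\nabla_{\tanf}J)\tanf = Q(\tanf,\tanf)$, upon computing the second derivative of $\pi_* E_b$ needed to compensate the reduction to $F_a = X_a$ modulo verticals; the coefficient $-2$ and the $+h_0\tanf$ term emerge from balancing the structural equations with the bracket $\{h_0,h_1\}$ along $\lambda(t)$.

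The main obstacle I foresee is the bookkeeping in the last step: showing that no spurious horizontal terms survive in $\pi_*F_a$ and that the projected $F_{c_i}$ genuinely lie in $J\tanf^\perp\cap\distr$. This requires careful use of property (v) of Tanno's connection (the interaction $\tau\circ J + J\circ\tau = 0$), the skew-symmetry of $J$, and the identity $g(Q(X,Y),Z)+g(Y,Q(X,Z))=0$ (which follows from $\nabla g = 0$ together with $J^2=-\id$ on $\distr$). Once these verifications are done, the three subspaces $S^a,S^b,S^c$ spanned by $X_a,X_b$ and $J\tanf^\perp\cap\distr$ are exactly the canonical ones, proving the decomposition; the fact that $\tanf\in S^c$ is then automatic since $\tanf \perp J\tanf$ under $g$.
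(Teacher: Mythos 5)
Your overall strategy is the right one and is essentially the paper's: build the canonical Darboux frame of Proposition~\ref{p:canonical} explicitly out of the lifted frame $\{\partial_{h_\mu},\wt{X}_\mu\}$ attached to an adapted frame along $\gamma$, solve the structural equations step by step, and read off $S^a,S^b,S^c$ as the projections $\pi_*F_\mu$. However, two concrete points in your outline are wrong and would derail the execution. First, you have the hierarchy inverted. From the normal form $C_1,C_2$ of Proposition~\ref{p:canonical} the structural equations read $\dot E_a=E_b$, $\dot E_b=-F_b$, $\dot E_{c_i}=-F_{c_i}$ — not $\dot E_a=0$ and $\dot E_b=-F_b+E_a$. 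The seed of the construction is therefore $E_a$, not $E_b$: $E_a$ is the unique (normalized) vertical direction whose Lie derivative along $\vec H$ is \emph{still vertical}, and this singles out $E_a=\partial_{h_0}$, the fiber direction dual to the Reeb field, since $\mathcal{L}_{\vec H}\partial_{h_0}=\partial_{h_1}$ (with $X_1=-J\tanf$) while $\mathcal{L}_{\vec H}$ applied to the vertical direction dual to $\tanf$ already acquires the horizontal component $-\tanf$. This corank-one phenomenon is precisely what forces $S^a$ to be the transverse direction and $S^b=\spn\{J\tanf\}$; with your ordering ("$E_a$ obtained by applying $\vec H$ to the vertical piece corresponding to $X_1=\tanf$") the candidate $E_a$ is not vertical, the normal form does not close, and the roles of $S^a$ and $S^b$ come out scrambled. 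Second, $h_0$ is \emph{not} constant along a general extremal: one has $\dot h_0=\{H,h_0\}=g(\tau(\tanf),\tanf)$, which vanishes only for $K$-type structures ($\tau=0$). The Reeb field is a symmetry of the contact form, not of the Hamiltonian.

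Two smaller remarks. The paper does not use Tanno-parallel transport for $X_2,\dots,X_{2d-1}$ but the modified rule $\nabla_\tanf X_j=\tfrac12(h_0JX_j+a_jJ\tanf)$ with $a_j=g(h_0\tanf-2Q(\tanf,\tanf),X_j)$; with plain Tanno transport the orthogonal matrix $\alpha$ relating $E_{c_j}$ to the $\partial_{h_i}+a_i\partial_{h_0}$ is no longer constant. This does not change the subspace $S^c=J\tanf^\perp\cap\distr$ (since $\alpha$ stays orthogonal and block-preserving), but you should flag it, as it matters for the curvature computations that the splitting is meant to feed into. Finally, once the hierarchy is corrected, your mechanism for producing $X_a$ is the right one: $F_a=-\dot F_b+R_{bb}E_b+\sum_jR_{bc,j}E_{c_j}$, and $\pi_*\dot F_b=\nabla_\tanf(-J\tanf)-X_0-Q(\tanf,\tanf)=h_0\tanf-2Q(\tanf,\tanf)-X_0$ via $\nabla_\tanf(J\tanf)=-h_0\tanf+Q(\tanf,\tanf)$, which gives the claimed span of $S^a$.
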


The directional curvature is a symmetric operator $\mathfrak{R}_{\gamma(t)} : T_{\gamma(t)} M \to  T_{\gamma(t)} M$. As in Sec.~\ref{s:invariantspaces}, for $\alpha= a,b,c$, we denote by $\mathfrak{R}_{\gamma(t)}^{\alpha\beta} : S^\alpha_{\gamma(t)} \to S^\beta_{\gamma(t)}$ the restrictions of the canonical curvature to the appropriate invariant subspace. We suppress the explicit dependence on $\gamma(t)$ from now on. 

Let $X_a$ and $X_b$ defined as above, and $\{X_{c_j}\}_{j=2}^{2d}$ be an orthonormal frame for $S^c$. Without loss of generality we assume $X_{c_{2d}} = \tanf$. Then $S^c = \spn\{X_{c_2},\ldots,X_{c_{2d-1}}\}\oplus \spn\{\tanf\}$. 

\begin{theorem}[Canonical curvature, case $Q=0$]\label{t:curvQ=0} In terms of the above frame, 
\begin{align}
\mathfrak{R}^{aa} & = 
g((\nabla_\tanf^2\tau)(\tanf),J\tanf) -5 h_0g((\nabla_\tanf\tau)(\tanf),\tanf)- 2  g(\tau(\tanf),\tanf)^2- 6 h_0^2 g(\tau(\tanf),J\tanf) \\ & \quad  - 2\|\tau(\tanf)\|^2 - g((\nabla_{X_0}\tau)(\tanf),\tanf), \\
\mathfrak{R}^{ab} & = 0, \\
\mathfrak{R}^{ac} & = R(\tanf, X_0, X_j, \tanf)+ g((\nabla_\tanf \tau)(\tanf),X_j)	 + 2h_0g(\tau(\tanf),JX_j) \\
& \quad - [ g((\nabla_\tanf \tau)(\tanf),\tanf)	 + 2h_0g(\tau(\tanf),J\tanf)]g(\tanf,X_j), \\
\mathfrak{R}^{bb}  & = R(\tanf,J\tanf,J\tanf,\tanf) - 3g(\tau(\tanf),J\tanf) + h_0^2, \\
\mathfrak{R}^{bc} & = -R(\tanf,J\tanf,X_j,\tanf) +  g(\tau(\tanf),X_j) - g(\tau(\tanf),\tanf)g(\tanf,X_j), \\
\mathfrak{R}^{cc} & =  \mathcal{S}[R(\tanf,X_{i},X_{j},\tanf)] +\frac{1}{4}h_0^2 \left[g(X_i,X_j) -g(X_i,\tanf )g(X_j,\tanf )\right],
\end{align}
where $i,j=2,\ldots,2d$ and $\mathcal{S}$ denotes symmetrisation. The canonical Ricci curvatures are
\begin{align}
\mathfrak{Ric}^{a} & =  
g((\nabla_\tanf^2\tau)(\tanf),J\tanf) -5 h_0g((\nabla_\tanf\tau)(\tanf),\tanf)- 2  g(\tau(\tanf),\tanf)^2- 6 h_0^2 g(\tau(\tanf),J\tanf) \\ &\quad  - 2\|\tau(\tanf)\|^2 - g((\nabla_{X_0}\tau)(\tanf),\tanf), \\
\mathfrak{Ric}^{b}  & =  R(\tanf,J\tanf,J\tanf,\tanf)  - 3g(\tau(\tanf),J\tanf) + h_0^2, \\
\mathfrak{Ric}^{c} & =  \mathrm{Ric}(\tanf) - R(\tanf,J\tanf,J\tanf,\tanf) + \frac{1}{4}h_0^2(2d-2).
\end{align}
\end{theorem}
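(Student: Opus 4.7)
The plan is to exhibit along the extremal $\lambda(t) = e^{t\vec{H}}(\lambda)$ an explicit canonical Darboux moving frame $\{E_\mu(t), F_\mu(t)\}_{\mu=0}^{2d}$ satisfying the structural equations of Proposition~\ref{p:canonical}, and then read off the curvature matrix $R(t)$ directly from $\dot F = RE - C_1^*F$. Theorem~\ref{t:cansplitting} already fixes $f_\mu(t) = \pi_* F_\mu(t)$ up to the orthogonal freedom in $S^c$, so the task reduces to determining the full lifts and extracting $R$. The building blocks are the fiber-linear functions $h_X(\lambda) = \langle \lambda, X\rangle$ for $X\in \Gamma(TM)$, their Hamiltonian vector fields $\vec{h}_X$, and the scalar $h_0 = \langle \lambda, X_0\rangle$ (which, unlike in the Sasakian case, is not preserved by the flow: $\dot h_0 = -h_{\tau(\tanf)}$). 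Along $\lambda(t)$ I would take, at leading order, $F_b \propto -\vec{h}_{J\tanf}$, $F_{c_j} \propto \vec{h}_{X_{c_j}}$ for an orthonormal extension $\{X_{c_j}\}_{j=2}^{2d}$ of $\tanf$ in $J\tanf^\perp \cap \distr$, and $F_a \propto \vec{h}_{X_0} + h_0 \vec{h}_\tanf$ (the Tanno-tensor contribution present in the general $X_a$ vanishes because $Q=0$). Each lift is then corrected by vertical terms to enforce $\sigma(F_\mu, F_\nu) = 0$ and to reduce $C_1, C_2$ to their normal forms; the $E_\mu(t)$ are uniquely determined as the vertical $\sigma$-duals of the $F_\mu$, and the remaining normalization constants are fixed by Proposition~\ref{p:canonical}.

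Once the frame is in place, computing $\dot F_\mu = \mathcal{L}_{\vec H} F_\mu$ reduces every quantity to Poisson brackets of the $h_X$'s. I would systematically use $\{h_X, h_Y\} = -h_{[X,Y]}$ together with the Tanno splitting $[X,Y] = \nabla_X Y - \nabla_Y X - T(X,Y)$ and the identities $\nabla X_0 = 0$, $\nabla g = 0$, $\nabla J = 0$ (this last one following from $Q=0$), plus the torsion formulas $T(X,Y) = g(X, JY)X_0$ for horizontal $X,Y$ and $T(X_0, X) = \tau(X)$. Expanding block by block yields the stated formulas. The diagonal block $\mathfrak{R}^{bb}$ collects $R(\tanf, J\tanf, J\tanf, \tanf)$ (from a double $h_{J\tanf}$ bracket), the torsion term $-3 g(\tau(\tanf), J\tanf)$, and the $h_0^2$ contribution forced by the nilpotent shift $C_1$. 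The block $\mathfrak{R}^{cc}$ is the symmetrized Tanno curvature on $J\tanf^\perp \cap \distr$ plus the $\tfrac{1}{4}h_0^2$ correction dictated by $C_2$. The most involved block is $\mathfrak{R}^{aa}$: it requires two Lie derivatives along $\vec H$ of $\vec h_{X_0 + h_0 \tanf}$, producing the second Tanno derivative $(\nabla^2_\tanf \tau)(\tanf)$, the cross term $(\nabla_{X_0}\tau)(\tanf)$, and quadratic torsion contributions. The off-diagonal blocks $\mathfrak{R}^{ac}$ and $\mathfrak{R}^{bc}$ arise analogously from mixed brackets, while $\mathfrak{R}^{ab} = 0$ is automatic from the normal form of $C_1, C_2$.

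The principal obstacle is not conceptual but organizational: each $\dot F_\mu$ produces a tree of iterated Poisson brackets, of which the vast majority must either be identified as Tanno curvature or torsion terms, or else shown to cancel against vertical corrections coming from the frame normalization. The key organizing principle is that the horizontal/vertical decomposition of $\vec{h}_X$ on $T^*M$ is governed by $\sigma(\vec{h}_X, \vec{h}_Y) = -h_{[X,Y]}$, so iterated brackets generate precisely the connection and curvature pieces one wants to see. Having the block decomposition of $R$, the canonical Ricci curvatures follow by partial tracing over the appropriate invariant subspaces; the formula for $\mathfrak{Ric}^c$ additionally requires the identity $R(\tanf, X_0, X_0, \tanf) = 0$ (a direct consequence of $\nabla X_0 = 0$) to rewrite the horizontal trace as $\mathrm{Ric}(\tanf) - R(\tanf, J\tanf, J\tanf, \tanf) + \tfrac{1}{4}h_0^2(2d-2)$.
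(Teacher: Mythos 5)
Your overall strategy coincides with the paper's: both construct the canonical Darboux frame explicitly as (corrected) lifts of an adapted frame on $M$, and then read the curvature off the structural equations, the entries being symplectic products $\sigma(\dot F_\mu,F_\nu)$ computed through brackets of the fiber-linear functions $h_X$ and the Tanno connection identities. The only organizational difference is the order of normalization: the paper follows the algorithm of \cite{lizel}, starting from the vertical elements ($E_a \to E_b \to F_b \to E_c \to F_c \to F_a$), whereas you start from the horizontal projections supplied by Theorem~\ref{t:cansplitting} and propose to add vertical corrections afterwards.

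There is, however, one concrete gap. You take $F_{c_j}\propto \vec{h}_{X_{c_j}}$ for ``an orthonormal extension $\{X_{c_j}\}_{j=2}^{2d}$ of $\tanf$ in $J\tanf^\perp\cap\distr$'' and claim the frame can then be brought to normal form by \emph{vertical} corrections plus constant normalizations. This is not enough: the residual freedom in the $c$-block is a \emph{time-dependent} rotation of the horizontal frame $X_2,\ldots,X_{2d}$ along the geodesic, and no vertical correction can absorb it. Unless the $X_{c_j}$ are transported by the specific rule of Eq.~\eqref{eq:partrans2}, i.e. $\nabla_{\tanf}X_j=\tfrac12\bigl(h_0JX_j+a_jJ\tanf\bigr)$ with $a_j=g(h_0\tanf-2Q(\tanf,\tanf),X_j)$, the normalization condition $\pi_*\ddot E_{c_j}=0$ fails (equivalently, a nonzero block appears where $C_1$ must vanish), and the matrix extracted from $\dot F=R E-C_1^*F$ differs from the canonical $R(t)$ by derivative-of-rotation terms; this modified parallel transport is precisely where the $\tfrac14 h_0^2\,g(X_i,X_j)$ and $-\tfrac14 a_ia_j$ corrections in $\mathfrak{R}^{cc}$ come from. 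Identifying and imposing this transport rule is the one genuinely nontrivial step of the normalization, and it is absent from your outline. A smaller point: your parenthetical $\dot h_0=-h_{\tau(\tanf)}$ has the wrong sign for the actual dynamics; Hamilton's equations together with $\Tor(X_0,\cdot)=\tau$ give $\tfrac{d}{dt}h_0(\lambda(t))=+g(\tau(\tanf),\tanf)$, and carrying the opposite sign through would flip the torsion terms in $\mathfrak{R}^{aa}$ and $\mathfrak{R}^{bb}$.
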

\begin{theorem}[Canonical curvature, general $Q$]\label{t:curvQneq0} In terms of above frame
\begin{align}
\mathfrak{R}^{bb}  & =  R(\tanf,J\tanf,J\tanf,\tanf) + 3 \|Q(\tanf,\tanf)\|^2 - 3g(\tau(\tanf),J\tanf) + h_0^2, \\
\mathfrak{R}^{bc} & = -R(\tanf,J\tanf,X_j,\tanf) +  g(\tau(\tanf),X_j) - g(\tau(\tanf),\tanf)g(\tanf,X_j)  + 3g((\nabla_\tanf Q)(\tanf,\tanf),X_j)\\ & \quad +8h_0g(Q(\tanf,\tanf),JX_j),\\
\mathfrak{R}^{cc} & = \mathcal{S}[R(\tanf,X_{i},X_{j},\tanf)]+h_0 \mathcal{S}[g(\tanf,Q(X_{j},X_{i}))] +\frac{1}{4}h_0^2 g(X_i,X_j) \\
& \quad - \frac{1}{4}g( X_i,h_0\tanf - 2Q(\tanf,\tanf))g(X_j,h_0\tanf -2Q(\tanf,\tanf)),
\end{align}
where $i,j=2,\ldots,2d$ and $\mathcal{S}$ denotes symmetrisation. The canonical Ricci curvatures are
\begin{align}
\mathfrak{Ric}^{b}  = & R(\tanf,J\tanf,J\tanf,\tanf) + 3 \|Q(\tanf,\tanf)\|^2 - 3g(\tau(\tanf),J\tanf) + h_0^2, \\
\mathfrak{Ric}^{c}= & \mathrm{Ric}(\tanf) - R(\tanf,J\tanf,J\tanf,\tanf) + \frac{1}{4}h_0^2(2d-2) - \|Q(\tanf,\tanf)\|^2.
\end{align}
\end{theorem}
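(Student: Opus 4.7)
The plan is to follow the same scheme used for the CR case $Q=0$ (Theorem~\ref{t:curvQ=0}), but carrying along the additional correction terms arising from the fact that the contact endomorphism $J$ is no longer parallel under Tanno's connection when $Q\neq 0$. Proposition~\ref{p:canonical} characterises the canonical Darboux frame $\{E_\mu(t),F_\mu(t)\}$ along the extremal $\lambda(t)$ up to an orthogonal rotation preserving the structural equations, and Theorem~\ref{t:cansplitting} identifies its horizontal projection $f_\mu(t) = \pi_* F_\mu(t)$ as the geometric frame $\{X_a,X_b,X_{c_j}\}$. Once such a frame is constructed explicitly on $T^*M$, the matrix $R(t)$ in the second structural equation $\dot F = R(t) E - C_1^* F$ is determined, and its entries can be rewritten in terms of Tanno's curvature, torsion $\tau$ and Tanno tensor $Q$.

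First I would exhibit the canonical frame on $T^*M$ along the extremal by horizontally lifting $X_a,X_b,X_{c_j}$ and fixing their vertical corrections so that the Darboux conditions and the nilpotent normal form of $C_1$ are satisfied. Working in a local orthonormal frame $X_1,\dots,X_{2d}$ of $\distr$ with fibre functions $h_i = \langle\lambda,X_i\rangle$ and $h_0 = \langle\lambda,X_0\rangle$, the flow of $\vec H = \sum_{i=1}^{2d} h_i \vec h_i$ evolves the $F_\mu$'s via Lie derivatives computable from the Poisson brackets $\{h_i,h_j\} = \langle\lambda,[X_i,X_j]\rangle$, together with the decomposition $[X_i,X_j] = \nabla_{X_i}X_j - \nabla_{X_j}X_i - \Tor(X_i,X_j)$ for Tanno's connection. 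The entries $R_{\alpha\beta}(t)$ at $t=0$ are then extracted by projecting $\dot F_\alpha$ onto the $E$-directions via the symplectic form.

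Second, I would collect the $Q$-corrections: each time $X_b = -J\tanf$ is differentiated along the geodesic, the Leibniz identity $\nabla_\tanf(J\tanf) = J\nabla_\tanf\tanf + Q(\tanf,\tanf)$ produces a new horizontal summand $Q(\tanf,\tanf)$. Iterating this once yields the term $3\|Q(\tanf,\tanf)\|^2$ in $\mathfrak R^{bb}$; mixing with the Reeb direction (via $h_0$) and with derivatives in the $c$-directions produces $3g((\nabla_\tanf Q)(\tanf,\tanf),X_j)$ and $8h_0 g(Q(\tanf,\tanf),JX_j)$ in $\mathfrak R^{bc}$; the quadratic correction in $\mathfrak R^{cc}$ comes directly from the fact that the vector $X_a = X_0 - 2Q(\tanf,\tanf) + h_0\tanf$ now contains a $Q$-piece which feeds back into the block structure via the Darboux orthogonalisation. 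The standard algebraic identities for $Q$, in particular the skew-symmetry $g(Q(X,Y),Z) + g(Y,Q(X,Z)) = 0$ on horizontal vectors and the anticommutation $J\circ Q(\,\cdot\,,Y) + Q(J\,\cdot\,,Y) = 0$ (consequences of $\nabla g = 0$, $\nabla\omega = 0$, and $J^2 = -\id + \omega\otimes X_0$), are used to cast the result into the displayed symmetric tensorial form. The Ricci formulas $\mathfrak{Ric}^b$ and $\mathfrak{Ric}^c$ then follow by tracing over $S^b$ and $S^c$, using $\mathrm{Ric}(\tanf) = \sum_{i=1}^{2d} R(\tanf,X_i,X_i,\tanf)$ (note $R(\tanf,X_0,X_0,\tanf) = 0$ since $\nabla X_0 = 0$) and isolating the $J\tanf$ direction.

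The main obstacle is this second step: the algebraic book-keeping of iterated Lie derivatives, since the $Q$-corrections couple the horizontal blocks to the Reeb component (through $h_0$) and to the torsion $\tau$, and must be reabsorbed order by order into modifications of the vertical parts $E_\mu(t)$ in order to preserve the normal form of Proposition~\ref{p:canonical}. Only after these reabsorptions do the surviving $Q$-contributions collapse into the compact tensorial expressions stated in Theorem~\ref{t:curvQneq0}.
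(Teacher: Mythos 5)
Your overall strategy coincides with the paper's: build the canonical Darboux frame explicitly in a lifted frame on $T^*M$, read the entries of $R(t)$ off as symplectic products $\sigma(\dot F_\alpha, F_\beta)$, and use the algebraic identities of $Q$ (Lemma~\ref{lem:identities}) together with the Bianchi-type identity of Lemma~\ref{l:propcurv} to put everything in tensorial form; the Ricci traces then follow from $\trace Q=0$ and $Q(\tanf,\tanf)\perp\tanf$. However, there is a concrete gap in how you propose to produce the frame. You start from the horizontal data $X_a,X_b,X_{c_j}$ and claim that all corrections needed to preserve the normal form of Proposition~\ref{p:canonical} can be reabsorbed into modifications of the \emph{vertical} parts $E_\mu(t)$. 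This cannot work: conditions (i)--(iii) of Proposition~\ref{p:canonical} constrain the horizontal projections $f_{c_j}(t)=\pi_*F_{c_j}(t)$ themselves, up to a single \emph{constant} rotation. Concretely, the paper shows (Step 4 of its algorithm) that the coefficient matrix $\alpha_{ij}$ relating $E_{c_j}$ to the vertical lifts $\partial_i+a_i\partial_0$ is constant \emph{only if} the underlying orthonormal frame of $S^c$ is transported by the modified rule
\begin{equation}
\nabla_{\tanf} X_j = \tfrac{1}{2}\bigl(h_0\, J X_j + a_j\, J\tanf\bigr), \qquad a_j = g\bigl(h_0\tanf - 2Q(\tanf,\tanf),X_j\bigr),
\end{equation}
which is neither Tanno-parallel nor Levi-Civita-parallel and already contains a $Q$-correction. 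If you lift an arbitrary (say Tanno-parallel) orthonormal frame of $S^c$ and only adjust vertical components, the structural equations acquire a time-dependent antisymmetric block in the $cc$-slot, and the matrix you extract is the canonical $R(t)$ conjugated by a non-constant rotation, not the canonical curvature itself.

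This omission is not cosmetic: the terms $\tfrac14 h_0^2 g(X_i,X_j)-\tfrac14 a_ia_j$ in $\mathfrak R^{cc}$ (the last two summands of the stated formula) are generated precisely by this transport law once it is fed through $\sigma(\dot F_{c_i}^v,F_{c_j}^h)$, so attributing the quadratic correction solely to the Darboux orthogonalisation against $X_a$ is incomplete. Likewise, the coefficients $3$ in $3\|Q(\tanf,\tanf)\|^2$ and $8$ in $8h_0\,g(Q(\tanf,\tanf),JX_j)$ are asserted rather than derived; in the paper the $3$ arises as $1+2$ from the two mixed pieces $\sigma(\dot F_b^v,F_b^h)$ and $\sigma(\dot F_b^h,F_b^v)$ after cancellation of the cross terms involving $g(\nabla_{J\tanf}\tanf,2Q(\tanf,\tanf))$. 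For a statement whose entire content is these numerical coefficients, the plan as written does not yet constitute a proof. (A minor point: the skew-symmetry you quote, $g(Q(X,Y),Z)+g(Y,Q(X,Z))=0$, is not property d) of Lemma~\ref{lem:identities}; the correct identity is $g(X,Q(Y,Z))=-g(Y,Q(X,Z))$, i.e.\ skewness of $\nabla_ZJ$.)
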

\begin{remark}
As in the Riemannian setting, there is no curvature in the direction of the geodesic, that is $\mathfrak{R}\tanf = 0$.
\end{remark}
The rest of the section is devoted to the proofs of Theorems~\ref{t:cansplitting},~\ref{t:curvQ=0} and~\ref{t:curvQneq0}.

\subsection{Lifted frame}

Only in this subsection, $M$ is an $n$-dimensional manifold, since the construction is general. We define a local frame on $T^*M$, associated with the choice of a local frame $X_1,\dots,X_n$ on $M$. All our considerations are local, then we assume that the frame is globally defined. For $\alpha = 1,\dots,n$ let $h_\alpha :T^*M \to \mathbb{R}$ be the linear-on-fibers function defined by $ h_\alpha(\lambda) := \langle \lambda, X_\alpha\rangle$. The action of derivations on $T^*M$ is completely determined by the action on affine functions, namely functions $a \in C^\infty(T^*M)$ such that $a(\lambda) = \langle \lambda, Y \rangle + \pi^* g$ for some $Y\in \Gamma(TM)$, $g\in C^\infty(M)$. Then, we define the \emph{lift of a field} $X \in \Gamma(TM)$ as the field $\wt{X} \in  \Gamma(T(T^*M))$ such that $\wt{X}(h_\alpha) = 0$ for $\alpha=1,\dots,n$ and $\wt{X}(\pi^*g) = X(g)$. This, together with Leibniz's rule, characterize the action of $\wt{X}$ on affine functions, and completely define $\wt{X}$. Observe that $\pi_* \wt{X} = X$. On the other hand, we define the (vertical) fields $\partial_{h_\alpha}$ such that $\partial_{h_\alpha} (\pi^* g) = 0$, and $\partial_{h_\alpha} (h_\beta) = \delta_{\alpha\beta}$. 
It is easy to check that $\{ \partial_{h_\alpha}, \wt{X}_\alpha\}_{\alpha=1}^{n}$ is a local frame on $T^*M$. We call such a frame the \emph{lifted frame}.
\begin{remark}
Let $(q_1,\ldots,q_n)$ be local coordinates on $M$, and let $X_\alpha = \partial_{q_\alpha}$. In this case, our construction gives the usual frame $\{\partial_{q_\alpha},\partial_{p_\alpha}\}_{\alpha=1}^n$ on $T^*M$ associated with the dual coordinates $(q_1,\ldots,q_n,p_1,\ldots,p_n)$ on $T^*M$. The construction above is then a generalization of this classical construction to non-commuting local frames.
\end{remark}
Any fixed lifted frame defines a splitting
\begin{equation}
T_\lambda (T^*M) = \spn_\lambda\{\wt{X}_1,\ldots,\wt{X}_n\} \oplus \spn_\lambda\{\partial_{h_1},\ldots,\partial_{h_n}\}.
\end{equation}
Since $\spn_\lambda\{\partial_{h_1},\ldots,\partial_{h_n}\} = \ker\pi_*|_\lambda$ is the vertical subspace of $T_\lambda(T^*M)$, we associate, with any vector $\xi \in T_\lambda(T^*M)$ its \emph{horizontal part} $\xi^h \in T_x M$ and its \emph{vertical part} $\xi^v \in T_x^*M$:
\begin{equation}
\xi^h := \pi_* \xi, \qquad \xi^v := \xi - \wt{\pi_* \xi},
\end{equation}
where here, as we will always do in the following, we used the canonical identification $\ker \pi_*|_\lambda = T_\lambda (T^*_x M) \simeq T_x^*M$, where $x = \pi(\lambda)$. We stress that the concept of vertical part introduced here makes sense w.r.t. a fixed frame, has no invariant meaning, and is only a tool for computations.

Let $\nu_1,\dots,\nu_{n}$ be the dual frame of $X_1,\dots,X_{n}$, i.e. $\nu_\alpha(X_\beta) = \delta_{\alpha\beta}$. In the following, we use the notation $\partial_\alpha:=\partial_{h_\alpha}$ and we suppress the tilde from $\wt{X}$; the meaning will be clear from the context. Then the symplectic form is written as
\begin{equation}\label{eq:sympl}
\sigma =  \sum_{\alpha=1}^n dh_\alpha\wedge \pi^* \nu_\alpha + h_\alpha \pi^* d \nu_\alpha.
\end{equation}
We say that a vector $V_\lambda \in T_{\lambda}(T^*M)$ is \emph{vertical} if $\pi_* V = 0$. In this case $V_\lambda \in T_{\lambda}(T_{\pi(\lambda)}^*M) \simeq T_{\pi(\lambda)}^*M$ can be identified with a covector. We have the following useful lemma.
\begin{lemma}
Let $V,W \in T_\lambda(T^*M)$. If $V$ is vertical, then
\begin{equation}
\sigma_\lambda(V,W) = \langle V , \pi_* W\rangle.
\end{equation}
\end{lemma}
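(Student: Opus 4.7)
The plan is to prove the identity by using the explicit coordinate expression for the symplectic form given in Eq.~\eqref{eq:sympl}, namely $\sigma = \sum_\alpha dh_\alpha\wedge \pi^*\nu_\alpha + h_\alpha\,\pi^*d\nu_\alpha$, and evaluating on the pair $(V,W)$ using only the hypothesis $\pi_* V = 0$.

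First I would observe that the second summand $h_\alpha \pi^* d\nu_\alpha$ is a pullback of a $2$-form from $M$, so
\[
(\pi^* d\nu_\alpha)(V,W) \;=\; d\nu_\alpha(\pi_* V, \pi_* W) \;=\; 0,
\]
because $\pi_* V = 0$. For the first summand, I expand the wedge product and use $(\pi^*\nu_\alpha)(V) = \nu_\alpha(\pi_* V) = 0$, so only one term survives:
\[
(dh_\alpha \wedge \pi^*\nu_\alpha)(V,W) \;=\; dh_\alpha(V)\,\nu_\alpha(\pi_* W).
\]
Summing over $\alpha$ gives
\[
\sigma_\lambda(V,W) \;=\; \sum_{\alpha=1}^n V(h_\alpha)\,\nu_\alpha(\pi_* W).
\]

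The remaining step is to identify the right-hand side with $\langle V, \pi_* W\rangle$ under the canonical isomorphism $\ker \pi_*|_\lambda \simeq T^*_{\pi(\lambda)}M$. Writing $V = \sum_\alpha v_\alpha \partial_{h_\alpha}$ in the lifted frame, one has $V(h_\alpha) = v_\alpha$; on the other hand the covector corresponding to $V$ under the identification evaluates on $X_\beta \in T_x M$ as $v_\beta$, hence coincides with $\sum_\alpha v_\alpha \nu_\alpha$. Therefore
\[
\langle V, \pi_* W\rangle \;=\; \sum_{\alpha=1}^n v_\alpha\, \nu_\alpha(\pi_* W) \;=\; \sum_{\alpha=1}^n V(h_\alpha)\,\nu_\alpha(\pi_* W),
\]
matching the expression obtained for $\sigma_\lambda(V,W)$.

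There is no real obstacle here: the statement is essentially a compatibility check between the coordinate expression of the tautological symplectic structure and the canonical identification of vertical tangent vectors with covectors. The only minor care needed is to verify that the identification $\partial_{h_\alpha} \leftrightarrow \nu_\alpha$ is indeed the canonical one, which follows immediately from the defining relations $h_\alpha(\lambda) = \langle \lambda, X_\alpha\rangle$ and $\nu_\alpha(X_\beta) = \delta_{\alpha\beta}$.
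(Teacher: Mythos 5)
Your proof is correct, and it follows exactly the computation that the paper leaves implicit (the lemma is stated right after Eq.~\eqref{eq:sympl} without proof): the term $h_\alpha\,\pi^*d\nu_\alpha$ dies because $\pi_*V=0$, the wedge term leaves only $\sum_\alpha V(h_\alpha)\,\nu_\alpha(\pi_*W)$, and the identification $\partial_{h_\alpha}\leftrightarrow\nu_\alpha$ of $\ker\pi_*|_\lambda$ with $T^*_{\pi(\lambda)}M$ turns this into $\langle V,\pi_*W\rangle$. No gaps; your closing remark correctly isolates the only point requiring care, namely that the frame-induced identification of vertical vectors with covectors agrees with the canonical one.
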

Using Eq.~\eqref{eq:sympl}, the Hamiltonian vector field associated with $h_\alpha$ has the form
\begin{equation}
\vec{h}_\alpha = X_\alpha + \sum_{\delta=1}^n c_{\alpha\beta}^\delta h_\delta \partial_\beta, \qquad \alpha,\beta =1,\ldots,n.
\end{equation}
Since $H= \frac{1}{2}\sum_{i=1}^k h_i^2$, then $\vec{H} = \sum_{i=1}^k h_i \vec{h}_i$, where $k= \rank \distr$.

\subsection{Some useful formulas}\label{s:Tannocoord}
In what follows $\nabla$ will always denote Tanno connection. Let $X_1,\ldots,X_{2d}$ be an orthonormal frame for $\distr$ and $X_0$ be the Reeb field. The \emph{structural functions} are defined by
\begin{equation}
[X_\alpha,X_\beta] = \sum_{\delta=0}^{2d} c_{\alpha\beta}^\delta X_\delta, \qquad \alpha,\beta = 0,\ldots,2d.
\end{equation}
Notice that $c_{\alpha 0}^0 = \omega([X_\alpha,X_0]) = d\omega (X_\alpha,X_0) =0$. We define ``horizontal'' Christoffel symbols:
\begin{equation}
\Gamma_{ij}^k := \frac{1}{2}\left( c_{ij}^k + c_{ki}^j + c_{kj}^i \right), \qquad i,j,k=1,\ldots,2d.
\end{equation}
Notice that $\Gamma_{ij}^k + \Gamma_{ik}^j=0$ and that one can recover some of the structural functions with the relation
\begin{equation}
c_{ij}^k = \Gamma_{ij}^k - \Gamma_{ji}^k.
\end{equation}
In terms of the structural functions, we have, for $i,j,k=1,\ldots,2d$
\begin{equation}
\nabla_{X_i} X_j = \sum_{k=1}^{2d} \Gamma_{ij}^k X_k, \qquad \nabla_{X_0} X_i = \frac{1}{2}\sum_{k=1}^{2d}(c_{k0}^i - c_{i0}^k) X_k,
\end{equation}
\begin{equation}
\tau(X_i) = \frac{1}{2}\sum_{k=1}^{2d}(c_{k0}^i + c_{i0}^k) X_k, \qquad T(X_j,X_k) = - c_{jk}^0 X_0, \qquad JX_i = \sum_{j=1}^{2d} c_{ij}^0 X_j.
\end{equation}

\begin{lemma}
Let $\lambda \in T^*M$ an initial covector, associated with the geodesic $\gamma_\lambda(t) = \pi(\lambda(t))$. In terms of Tanno covariant derivative, along the geodesic $\gamma_\lambda(t)$, we have
\begin{align}
\nabla_\tanf \tanf & = h_0 J\tanf, \\
\nabla_\tanf (J\tanf) & = -h_0\tanf + Q(\tanf,\tanf), \\
\nabla_\tanf h_0 & = g(\tau(\tanf),\tanf),
\end{align}
where $h_0 = h_0(\lambda(t)) = \langle\lambda(t),X_0\rangle$ and $\tanf:=\dot{\gamma}_\lambda$.
\end{lemma}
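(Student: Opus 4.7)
The plan is to work in the cotangent bundle, where the geodesic $\gamma_\lambda(t)$ is the projection of the Hamiltonian flow line $\lambda(t)=e^{t\vec H}(\lambda)$, with horizontal velocity $\tanf = \sum_{i=1}^{2d} h_i(\lambda(t)) X_i$, and to derive each identity by expanding the Tanno covariant derivative in the orthonormal frame $\{X_0,X_1,\dots,X_{2d}\}$, converting $t$-derivatives of the functions $h_i$ along the extremal into Poisson brackets with $H=\tfrac12\sum_{i=1}^{2d}h_i^2$. The key input is the formula $\{h_\alpha,h_\beta\} = h_{[X_\alpha,X_\beta]} = \sum_{\delta=0}^{2d}c_{\alpha\beta}^\delta h_\delta$, which yields
\begin{equation}
\dot h_k = \vec H(h_k) = \sum_{i=1}^{2d}\sum_{\delta=0}^{2d} h_i h_\delta\, c_{ik}^\delta, \qquad k=0,1,\dots,2d.
\end{equation}

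For the first identity I would write $\nabla_\tanf \tanf = \sum_k\bigl[\dot h_k + \sum_{i,j=1}^{2d} h_i h_j\,\Gamma_{ji}^k\bigr]X_k$ and compare with $h_0\,J\tanf = h_0\sum_k(\sum_i h_i c_{ik}^0)X_k$. Separating $\delta=0$ from $\delta=1,\dots,2d$ in the expression for $\dot h_k$, the contribution with $\delta=0$ gives exactly the desired $h_0\,J\tanf$, so it suffices to show that the remaining purely horizontal combination
\begin{equation}
\sum_{i,l=1}^{2d} h_i h_l\, c_{ik}^l + \sum_{i,j=1}^{2d} h_i h_j\,\Gamma_{ji}^k
\end{equation}
vanishes. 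This is an index calculation using $c_{ij}^k = \Gamma_{ij}^k-\Gamma_{ji}^k$ and the antisymmetry $\Gamma_{ij}^k+\Gamma_{ik}^j=0$, combined with the fact that any sum of the form $\sum h_i h_l T^l_{ki}$ with $T$ antisymmetric in $i\leftrightarrow l$ vanishes against the symmetric tensor $h_ih_l$.

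The second identity then follows immediately from the Leibniz rule applied with Tanno's connection: since $(\nabla_\tanf J)\tanf = Q(\tanf,\tanf)$ by definition of Tanno's tensor,
\begin{equation}
\nabla_\tanf(J\tanf) = Q(\tanf,\tanf) + J(\nabla_\tanf\tanf) = Q(\tanf,\tanf) + h_0 J^2\tanf,
\end{equation}
and horizontality of $\tanf$ together with $J^2 = -\mathbb{I}+\omega\otimes X_0$ gives $J^2\tanf = -\tanf$.

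For the third identity, $\nabla_\tanf h_0$ is simply the total derivative $\dot h_0$ along $\lambda(t)$. Using $c_{i0}^0 = d\omega(X_i,X_0) = 0$, the Hamiltonian computation reduces to $\dot h_0 = \sum_{i,k=1}^{2d} h_ih_k\, c_{i0}^k$, and matching this with $g(\tau(\tanf),\tanf)$ via $g(\tau(X_i),X_k)=\tfrac12(c_{k0}^i+c_{i0}^k)$ and the symmetry $i\leftrightarrow k$ closes the argument. The main obstacle is the first identity: it is purely combinatorial, but one must juggle the definitions of $\Gamma$, the structural constants, and the (anti)symmetries carefully; after that is done, the other two identities are essentially one-line consequences of the defining properties of Tanno's connection.
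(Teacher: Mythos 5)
Your proposal is correct and follows essentially the same route as the paper's proof: expand $\tanf=\sum_i h_i X_i$ in the adapted orthonormal frame, convert $\dot h_\alpha$ into Poisson brackets with $H$ via the structural constants, observe that the $\delta=0$ contribution yields $h_0 J\tanf$ while the horizontal contributions cancel against the Christoffel terms by the (anti)symmetries of $\Gamma_{ij}^k$, and then deduce the second identity from Leibniz's rule with $Q(\tanf,\tanf)=(\nabla_\tanf J)\tanf$ and the third from $\dot h_0=\{H,h_0\}$ together with $g(\tau(X_i),X_k)=\tfrac12(c_{k0}^i+c_{i0}^k)$. The only difference is that you spell out the cancellation of the horizontal terms, which the paper compresses into a single chain of equalities.
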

\begin{proof}
Along the geodesic, $\tanf|_{\gamma(t)} = \sum_{i=1}^{2d} h_i(t) X_i|_{\gamma(t)}$, where $h_i(t) = h_i(\lambda(t)) = \langle \lambda(t),X_i\rangle$. Then
\begin{align}
\nabla_\tanf \tanf & = \sum_{i=1}^{2d} \dot{h}_i X_i + \sum_{i,j=1}^{2d} h_j h_i \nabla_{X_i} X_j = \sum_{i=1}^{2d} \{H,h_i\} X_i + \sum_{i,j,k=1}^{2d} h_j h_i \Gamma_{ij}^k X_k  \\ 
& = \sum_{\alpha=0}^{2d} \sum_{i,j=1}^{2d} h_j c_{ji}^\alpha h_\alpha X_i + \sum_{i,j,k=1}^{2d} h_j h_i \Gamma_{ij}^k X_k = h_0 \sum_{j=1}^{2d} h_j c_{ji}^0 X_i= \sum_{j=1}^{2d} h_0 h_j JX_j = h_0 J\tanf.
\end{align}
where we used the Hamilton's equation for normal geodesics. The second identity follows from the first and the definition of $Q$. For the third identity, we use again Hamilton's equations:
\begin{equation}
\nabla_\tanf h_0 = \dot{h}_0 = \{ H, h_0\} = \sum_{i,j=1}^{2d} h_i c_{i0}^j h_j = \frac{1}{2} \sum_{i,j=1}^{2d} h_i h_j (c_{i0}^j + c_{j0}^i) = g(\tau(\tanf),\tanf).\qedhere
\end{equation}
\end{proof}

\begin{lemma}\label{lem:identities}
Let $X, Y,Z $ be vector fields. Then 
\begin{itemize}
\item[a)] $Q(X,Y)$ is horizontal,
\item[b)] $Q(JX,Y) = -J Q(X,Y)$,
\item[c)] $Q(X,Y) \perp X$,
\item[d)] $g(X,Q(Y,Z)) = - g(Y,Q(X,Z))$ (skew-symmetry w.r.t. the first two arguments),
\item[e)] $Q(X,Y) \perp JX$,
\item[f)] $\mathfrak{S}g(\Tor(X,Y),JZ) =0$, where $\mathfrak{S}$ denotes the cyclic sum,
\item[g)] $\mathfrak{S}g(Y,Q(Z,X)) = 0$,
\item[h)] $Q(Y,JY) = Q(JY,Y) = -JQ(Y,Y)$,
\item[i)] $\trace Q=0$.
\end{itemize}
\end{lemma}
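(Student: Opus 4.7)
The plan is to prove the nine identities in the order listed, bootstrapping each from the previous ones and from the defining properties of the Tanno connection ($\nabla\omega=0$, $\nabla X_0=0$, $\nabla g=0$, together with (iv)--(v) for the torsion) and the algebraic relation $J^{2}=-\id+\omega\otimes X_0$.

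For (a), I observe that $\omega\circ J=0$ globally, since $\omega(JX)=g(X_0,JX)=d\omega(X_0,X)=0$. Combined with $\nabla\omega=0$, this gives $\omega(Q(X,Y))=\omega(\nabla_Y JX)=Y\omega(JX)-(\nabla_Y\omega)(JX)=0$. For (b), I expand $Q(JX,Y)=\nabla_Y(J^{2}X)-J\nabla_Y JX$ using $J^{2}=-\id+\omega\otimes X_0$ together with $\nabla X_0=\nabla\omega=0$; the remaining terms collapse to $-JQ(X,Y)$. For (c) and (d) the key observation is the identity $(\nabla_Z d\omega)(X,Y)=g(X,Q(Y,Z))$, which follows immediately from $\nabla g=0$ and $d\omega(X,Y)=g(X,JY)$. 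Since $d\omega$ is antisymmetric, so is $\nabla_Z d\omega$, which is exactly (d); specializing $X=Y$ yields (c). Item (e) is then a two-line computation: $g(JX,Q(X,Y))\stackrel{(d)}{=}-g(X,Q(JX,Y))\stackrel{(b)}{=}g(X,JQ(X,Y))=-g(JX,Q(X,Y))$, so twice the quantity vanishes.

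The genuinely delicate part is (f)--(g), and this is where I expect the main obstacle to lie. For (g) I exploit the Bianchi-type identity coming from $d(d\omega)=0$. Expanding $d\eta$ for a $2$-form $\eta$ through any connection, the ``mixed'' contributions $\mathfrak{S}[\eta(Y,\nabla_X Z)+\eta(\nabla_Y X,Z)]$ cancel pairwise by antisymmetry of $\eta$, leaving
\begin{equation}
0=\mathfrak{S}(\nabla_X d\omega)(Y,Z)+\mathfrak{S}\,d\omega(T(X,Y),Z)=\mathfrak{S}\,g(Y,Q(Z,X))+\mathfrak{S}\,g(T(X,Y),JZ),
\end{equation}
so (g) is reduced to (f). The difficulty in (f) is that $T$ has no tensorial closed form on all of $TM$: I handle this by splitting each vector field as $X=X^{h}+\omega(X)X_0$. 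Property (iv) gives $T(X^{h},Y^{h})=g(X^{h},JY^{h})X_0$, so pairing with $JZ$ (which is horizontal) kills the Reeb piece, producing $g(T(X,Y),JZ)=\omega(X)g(\tau Y^{h},JZ^{h})-\omega(Y)g(\tau X^{h},JZ^{h})$. Regrouping the cyclic sum by powers of $\omega(X),\omega(Y),\omega(Z)$, each coefficient has the form $g(\tau A,JB)-g(\tau B,JA)$, and vanishes because the symmetry of $\tau$ and the anti-commutation $\tau\circ J+J\circ\tau=0$ (property (v)) together force $g(\tau A,JB)=g(\tau B,JA)$.

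Finally, (h) is extracted by applying (g) to the triple $(W,U,JU)$: of the three cyclic terms, two are annihilated by (b), (d), (e), leaving $g(W,Q(U,JU)+JQ(U,U))=0$ for every $W$, which gives $Q(U,JU)=-JQ(U,U)$, while $Q(JU,U)=-JQ(U,U)$ is already (b). Item (i) is immediate: $\trace J=\sum_{i}g(Je_{i},e_{i})=\sum_{i}d\omega(e_{i},e_{i})=0$ is a constant function, and since $\nabla$ commutes with total contraction, $\trace(\nabla_Y J)=Y(\trace J)=0$, which is precisely the vanishing of $\trace Q(\cdot,Y)$.
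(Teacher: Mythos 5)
Items a)--h) of your argument are correct, and in several places cleaner than the paper's own proof: the identity $(\nabla_Z d\omega)(X,Y)=g(X,Q(Y,Z))$ disposes of c) and d) at once and feeds directly into the $d^2\omega=0$ computation for g), avoiding the case-by-case splittings ($X=X_0$ versus $X$ horizontal) that the paper uses. Two cosmetic points: in a) you should also discard the term $\omega(J\nabla_Y X)$, which vanishes by $\omega\circ J=0$; and in h) only one of the three cyclic terms is annihilated outright --- the term $g(JU,Q(W,U))$ is not zero but is converted into $g(W,JQ(U,U))$ via d) and b) --- though the displayed conclusion $g(W,Q(U,JU)+JQ(U,U))=0$ is exactly right.

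The genuine gap is item i). The quantity $\trace Q$ in the lemma is the \emph{vector field} obtained by contracting the two input slots of $Q$ against the metric, $\trace Q=\sum_{i=1}^{2d}Q(X_i,X_i)=\sum_{i}(\nabla_{X_i}J)X_i$; this is how it is used later in the paper (it appears inside $g(\tanf,\trace Q)$ in the computation of $\Riccan^c$, and as $\sum_i g(Q(X_i,X_i),Y)$ in the Yang--Mills appendix). What you prove instead is that the endomorphism $\nabla_Y J$ is trace-free for each fixed $Y$, i.e.\ $\sum_{\alpha} g(Q(X_\alpha,Y),X_\alpha)=0$: a different contraction (first input against output), which moreover is already an immediate consequence of the skew-symmetry in d) and carries no new content. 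The intended statement does not follow from $\trace J$ being constant, because the differentiation direction is being summed over together with the argument. A correct argument runs as follows: $Q(X_0,X_0)=0$; since $Q(X_i,X_i)$ is horizontal by a), item b) gives $Q(X_i,X_i)=-J^2Q(X_i,X_i)=JQ(JX_i,X_i)$; item h) applied to $Y=JX_i$ gives $Q(JX_i,X_i)=JQ(JX_i,JX_i)$, hence $Q(X_i,X_i)=J^2Q(JX_i,JX_i)=-Q(JX_i,JX_i)$. Summing over $i$ and using that $\{JX_i\}$ is again an orthonormal frame of $\distr$ yields $\trace Q=-\trace Q$.
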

\begin{proof}
To prove a), observe that, since $JX_0 = 0$, it follows that $Q(X_0,Y) = 0$ for any $Y$. Then we can assume w.l.o.g. that $X$ is horizontal. Clearly $JX$ is horizontal too. Then $\omega(JX) = 0$. Therefore, since $\omega$ is parallel for Tanno connection, $0 = \omega(\nabla_Y(J)X + J \nabla_Y X) = \omega(Q(X,Y))$, where we used the fact that $\nabla_Y X$ is horizontal. To prove b), observe first that if $X= X_0$ the identity is trivially true. Then, w.l.o.g. we assume that $X$ is horizontal. Consider the covariant derivative of the identity $J^2 X = -X$. Then we obtain $(\nabla_Y J)(JX) + J (\nabla_Y J) X + J^2 \nabla_Y X = -\nabla_Y X$, which implies b), by definition of $Q$. To prove c), observe that $X \perp JX$ by definition. Then $g(JX,X) = 0$. By taking the covariant derivative we obtain $g(Q(X,Y),X) + g(J\nabla_Y X, X) + g(JX, \nabla_Y X)= 0$, which implies c). To prove d), observe that the identity is trivially true when $X = X_0$ or $Y = X_0$. Then we may assume w.l.o.g. that $X, Y$ are horizontal. Then we take the covariant derivative in the direction $Z$ of the identity $g(JX,Y) = -g(X,JY)$, and we obtain the result. Point e) follows from d) and b). 
To prove f), we have
\begin{equation}
\mathfrak{S}g(\Tor(X,Y),JZ) = g(\Tor(X,Y),JZ) + g(\Tor(Z,X),JY)+ g(\Tor(Y,Z),JX).
\end{equation}
If $X,Y,Z \in \distr$, then all the terms are trivially zero. Then assume w.l.o.g. that $X = X_0$. Then
\begin{align}
\mathfrak{S}g(\Tor(X_0,Y),JZ) & = g(\Tor(X_0,JY),Z) - g(\Tor(X_0,JZ),Y)+ g(\Tor(Y,Z),\cancel{JX_0}) \\
& = g(\tau(JY),Z) - g(\tau(JZ),Y) = 0,
\end{align}
where we used the fact that $\tau$ is symmetric and $\tau \circ J +J \circ \tau = 0$. To prove g), consider the following identity for the differential of a $2$-forms $\alpha$:
\begin{equation}
d\alpha(X,Y,Z) = \mathfrak{S}(X(\alpha(Y,Z)) - \mathfrak{S}\alpha([X,Y],Z).
\end{equation}
We apply it to the two form $\alpha = d\omega$. Since $d\alpha =d^2\omega =0$, we have
\begin{align}
0 & =  d^2\omega = \mathfrak{S}\left[ X(d\omega(Y,Z)) - d\omega([X,Y],Z)\right] =\mathfrak{S}\left[ X(g(Y,JZ)) -g([X,Y],JZ)\right]  \\
& = \mathfrak{S}\left[ \cancel{g(\nabla_X Y, JZ)} + g(Y,Q(Z,X)) + g(Y,J\nabla_X Z) - g(\cancel{\nabla_X Y} - \nabla_Y X,JZ)\right] \\
&  \quad + \xcancel{\mathfrak{S}g(\Tor(X,Y),JZ)} \\
& = \mathfrak{S}\left[g(Y,Q(Z,X)) - \cancel{g(\nabla_X Z),JY)} + \cancel{g(\nabla_Y X ,JZ)}\right].
\end{align}
where we used the definition of $J$ and property f). Point h) follows from g), e), d), and b). 
For i) observe that $Q(X_{0},X_{0})=0$. Then, for every orthonormal frame $X_{1},\ldots,X_{2d}$ of $\distr$, we have
\bqn
\trace Q= \sum_{i=1}^{2d}Q(X_{i},X_{i})=-\sum_{i=1}^{2d}J^{2}Q(X_{i},X_{i})=\sum_{i=1}^{2d}JQ(JX_{i},X_{i})=-\sum_{i=1}^{2d}Q(JX_{i},JX_{i})=-\trace Q,
\eqn
where we used properties a), h) and b).
\end{proof}
Tanno connection has torsion. Then the curvature tensor 
\begin{equation}
R(X,Y,Z,W):=g(\nabla_X\nabla_Y Z - \nabla_Y \nabla_X Z - \nabla_{[X,Y]}Z,W)
\end{equation} 
is not symmetric w.r.t.\ to exchange of the first and second pair of arguments. 
\begin{lemma}\label{l:propcurv}
For any $X,Y,Z\in \distr$ we have
\begin{equation}\label{eq:id1}
\begin{split}
R(X,Z,Y,X) & = R(X,Y,Z,X) + g(X,J Z)g(X,\tau(Y)) + g(Z,JY)g(X,\tau(X)) \\
& \quad + g(Y,JX)g(X,\tau(Z)).
\end{split}
\end{equation}
In particular, if $Y,Z \in JX^\perp\cap\distr$ we have
\begin{equation}\label{eq:id1b}
R(X,Z,Y	,X) = R(X,Y,Z,X) + g(Z,JY)g(X,\tau(X)).
\end{equation}
Moreover, for Tanno's tensor we have
\begin{equation}\label{eq:id2}
\mathcal{S}g(X,Q(X_i,X_j))=  \frac{1}{2} g(X_j,Q(X_i,X))-  g(X_j,Q(X,X_i)),
\end{equation}
where $\mathcal{S}$ denotes the symmetrization w.r.t.\ the displayed indices. 
\end{lemma}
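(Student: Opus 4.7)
The plan is to derive \eqref{eq:id1} from the first Bianchi identity for the Tanno connection (which has torsion), and to derive \eqref{eq:id2} as a direct consequence of properties d) and g) of Lemma~\ref{lem:identities}.

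For \eqref{eq:id1}, first I would note that, since $\nabla g = 0$, the curvature is antisymmetric in the last two arguments, so $g(R(Z,Y)X,X)=0$. The first Bianchi identity for a connection with torsion reads
\begin{equation}
\mathfrak{S}_{X,Y,Z} R(X,Y)Z = \mathfrak{S}_{X,Y,Z}\bigl[(\nabla_X T)(Y,Z) + T(T(X,Y),Z)\bigr].
\end{equation}
Pairing with $X$ and using the previous observation, I get
\begin{equation}
R(X,Z,Y,X) - R(X,Y,Z,X) = g\bigl(\mathfrak{S}_{X,Y,Z}\bigl[(\nabla_X T)(Y,Z) + T(T(X,Y),Z)\bigr],X\bigr).
\end{equation}
The next step is to show the $\nabla T$ contributions vanish. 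For horizontal $X,Y,Z,W$, using $T(X,Y)=g(X,JY)X_0$, the fact that Tanno connection preserves $\distr$ (because $\nabla\omega=0$), and $\nabla X_0=0$ together with the definition of $Q$, a short computation gives $(\nabla_W T)(Y,Z)=g(Y,Q(Z,W))X_0$, which is vertical, so its $g$-pairing with the horizontal vector $X$ vanishes. The quadratic torsion terms are computed explicitly from $T(\phi X_0,W)=\phi\,\tau(W)$: e.g.\ $T(T(X,Z),Y)=g(X,JZ)\tau(Y)$, and cyclically. Summing up yields precisely \eqref{eq:id1}. The specialization \eqref{eq:id1b} then follows by observing that $Y,Z\in JX^\perp\cap\distr$ implies $g(X,JZ)=-g(JX,Z)=0$ and $g(Y,JX)=0$, killing the first and third correction terms.

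For \eqref{eq:id2}, I would apply the cyclic identity g) of Lemma~\ref{lem:identities} to $(X,X_i,X_j)$, giving
\begin{equation}
g(X,Q(X_i,X_j)) = -g(X_i,Q(X_j,X)) - g(X_j,Q(X,X_i)),
\end{equation}
and then the skew-symmetry d) to rewrite $g(X_i,Q(X_j,X))=-g(X_j,Q(X_i,X))$. This yields
\begin{equation}
g(X,Q(X_i,X_j)) = g(X_j,Q(X_i,X)) - g(X_j,Q(X,X_i)).
\end{equation}
Swapping $i\leftrightarrow j$ and using d) twice gives $g(X,Q(X_j,X_i))=-g(X_j,Q(X,X_i))$. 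Averaging the two expressions produces exactly the right-hand side of \eqref{eq:id2}.

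The only mildly delicate step is verifying that the covariant derivatives of the torsion contribute nothing in \eqref{eq:id1}; everything else is a bookkeeping exercise with the algebraic identities of Lemma~\ref{lem:identities} and the explicit form of $T$ on Tanno connection. No genuinely hard obstacle is expected.
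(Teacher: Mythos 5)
Your proposal is correct and follows essentially the same route as the paper: the first Bianchi identity for a connection with torsion for \eqref{eq:id1} (the paper kills the $\nabla T$ contribution via the cyclic identity g) of Lemma~\ref{lem:identities}, while you observe that $(\nabla_W T)(Y,Z)=g(Y,Q(Z,W))X_0$ is proportional to $X_0$ and hence $g$-orthogonal to the horizontal $X$ --- both work), and properties d) and g) for \eqref{eq:id2}. The only thing to tidy is the cyclic order in your intermediate Bianchi display: with the cycle $X\to Y\to Z$ the paired left-hand side reads $R(X,Y,Z,X)-R(X,Z,Y,X)$, whereas your torsion terms ($T(T(X,Z),Y)=g(X,JZ)\tau(Y)$, etc.) correspond to the cycle $X\to Z\to Y$, which is the one that makes your displayed equation and the final signs in \eqref{eq:id1} come out exactly right.
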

\begin{proof}
The first identity is a consequence of first Bianchi identity for connections with torsion:
\begin{equation}
\mathfrak{S}(R(X,Y)Z) = \mathfrak{S}(\Tor(\Tor(X,Y),Z)) + (\nabla_X\Tor)(Y,Z),
\end{equation}
where $\mathfrak{S}$ denotes the cyclic sum. If $X,Y,Z \in \distr$, for Tanno connection we obtain
\begin{align}
\mathfrak{S}(R(X,Y)Z)) & =  \mathfrak{S}\left[g(X,JY)\tau(Z) + \nabla_X(\Tor(Y,Z)) - \Tor(\nabla_X Y,Z) - \Tor(Y,\nabla_X Z)\right]  \\
& =  \mathfrak{S}\left[g(X,JY)\tau(Z) + \nabla_X(g(Y,JZ))X_0 - g(\nabla_X Y,JZ)X_0 - g(Y,J\nabla_X Z) X_0\right]  \\
& =  \mathfrak{S}\left[g(X,JY)\tau(Z)\right] + \cancel{\mathfrak{S} g(Y,Q(Z,X))},
\end{align}
where we used property g) of Tanno's tensor. If we take the scalar product with $X$, we get
\begin{equation}
R(X,Y,Z,X) + R(Y,Z,X,X) +R(Z,X,Y,X) = g(X,\mathfrak{S}g(X,JY)\tau(Z)).
\end{equation}
Tanno's curvature is still skew symmetric in the first and last pairs of indices, and we get
\begin{equation}
\begin{split}
R(X,Y,Z,X) - R(X,Z,Y,X) & = g(X,JY)g(X,\tau(Z)) + g(Y,JZ)g(X,\tau(X)) \\
& \quad + g(Z,JX)g(X,\tau(Y)).
\end{split}
\end{equation}
This proves~\eqref{eq:id1}. For~\eqref{eq:id2}, using property g) of Tanno's tensor, we get
\begin{equation}
\frac{1}{2} g(X_j,Q(X_i,X))-  g(X_j,Q(X,X_i)) = \frac{1}{2}\left[g(X,Q(X_i,X_j)) + g(X,Q(X_j,X_i))\right]. \qedhere
\end{equation}
\end{proof}

\subsection{Computation of the curvatures}

We choose a conveniently adapted frame along the geodesic, of which we will consider the lift. 
\begin{definition} A local frame $X_0,X_1,\ldots,X_{2d}$ is \emph{adapted} if
\begin{itemize}
\item $X_0$ is Reeb vector field,
\item $X_{2d}$ is an horizontal extension of $\dot{\gamma}_\lambda(t)$, namely $X_{2d} \in \distr$ and $\dot\gamma_\lambda(t) = X_{2d}(\gamma_\lambda(t))$,
\item $X_1 = -JX_{2d}$,
\item $X_2,\ldots,X_{2d-1} \in \mathrm{span}(X_{2d},JX_{2d})^\perp$ are orthonormal.
\end{itemize}
\end{definition}
In particular, $X_0,X_1,\ldots,X_{2d}$ is an orthonormal frame for the Riemannian extension of $g$. From now on the last index $2d$ plays a different role, since it is constrained to be an horizontal extension of the tangent vector of the fixed geodesic. 

\begin{definition} An adapted frame $X_0,X_1,\ldots,X_{2d}$ is \emph{parallel transported} if
\begin{equation}\label{eq:partrans2}
\nabla_{\tanf} X_j = \frac{1}{2}(h_0 J X_j + a_j JX_{2d}), \qquad j =2,\ldots,2d
\end{equation}
along the geodesic $\gamma_\lambda(t)$, where
\begin{equation}
a_j := g(h_0 X_{2d}- 2Q(X_{2d},X_{2d}),X_j).
\end{equation}
\end{definition}
As one can readily check, an adapted, parallel transported frame is unique up to constant rotation of $X_2,\ldots,X_{2d-1}$. Notice that $a_j$ and $h_0$ are well defined functions on the curve $\gamma_\lambda(t)$ once the initial covector $\lambda$ is fixed.

\subsubsection{The algorithm}

To compute the canonical frame and curvatures, we follow the general algorithm developed in \cite{lizel}. The elements of the canonical frame and the curvature matrices will be computed in the following order:
\begin{equation}
E_a \to E_b \to F_b \to E_{c} \to F_{c} \to R_{bb}, R_{c c}, R_{bc} \to F_a \to R_{aa},R_{ac}.
\end{equation}
We choose an adapted parallel transported frame $X_0,X_1,\ldots,X_{2d}$ as defined above, and the associated lifted frame $\{X_\alpha,\partial_{\alpha}\}_{\alpha=0}^{2d}$. Along the extremal, we have 
\begin{equation}
h_i = \langle\lambda,X_i\rangle = g(X_{2d},X_i) = \delta_{i,2d}, \qquad i=1,\ldots,2d.
\end{equation}
We have the following simplifications for some structural functions, for $\alpha=0,\ldots,2d$:
\begin{align}
c_{1,\alpha}^0 & = \omega([X_1,X_\alpha]) = -d\omega(X_1,X_\alpha) = -g(X_1,JX_\alpha) = g(JX_{2d},JX_\alpha) = \delta_{2d,\alpha}, \\
c_{2d,\alpha}^0 & = \omega([X_{2d},X_\alpha]) = -d\omega(X_{2d},X_\alpha) = -g(X_{2d},JX_\alpha) = - g(X_1,X_\alpha) = \delta_{1,\alpha}.
\end{align}
From here, repeated Latin indices are summed from $1$ to $2d$, and Greek ones from $0$ to $2d$.

\subsubsection{Preliminary computations}
The Lie derivative w.r.t. $\vec{H}$ is denoted with a dot, namely $\dot{E}_{\alpha} = [\vec{H},E_{\alpha}]$. We compute it for the basic elements of the lifted frame $\{X_\alpha,\partial_\alpha\}_{\alpha=0}^{2d}$. Here, we use the shorthand $\nabla_\alpha = \nabla_{X_\alpha}$ for $\alpha=0,\ldots,2d$.
\begin{lemma}\label{l:prelcomps}
For any adapted frame (not necessarily parallel transported), we have
\begin{align}
\dot{\partial}_0 & = \partial_1, \\
\dot{\partial}_i & = -X_i - 2g(\tau(X_{2d}),X_i)\partial_0 + h_0 g(JX_j,X_i) \partial_j - g(\nabla_{2d} X_j,X_i) \partial_j - g(\nabla_i X_j,X_{2d}) \partial_j, \\
\dot{X}_i & = \nabla_{2d} X_i - \nabla_i X_{2d} -\delta_{i1} X_0 - \nabla_i g(\tau(X_{2d}),X_{2d}) \partial_0 - \nabla_i g(X_{2d},\nabla_{2d} X_j) \partial_j, \\
\dot{X}_0 & = -\nabla_0 X_{2d} + \tau(X_{2d}) - \nabla_0 g(\tau(X_{2d}),X_{2d}) \partial_0 - \nabla_0 g(X_{2d},\nabla_{2d} X_j) \partial_j.
\end{align}
\end{lemma}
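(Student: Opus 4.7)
The plan is a direct calculation from the explicit form of $\vec H$ in the lifted frame. Since $\vec H = \sum_{i=1}^{2d} h_i \vec h_i$ with $\vec h_\alpha = X_\alpha + \sum_{\beta,\delta} c_{\alpha\beta}^\delta h_\delta\, \partial_\beta$, the Lie bracket $[\vec H, W]$ is fully determined by its action on the functions $h_\beta$ and on pullbacks $\pi^* f$, via the Leibniz formula $[\vec H, W](g) = \vec H(W g) - W(\vec H g)$. The four cases $W = \partial_0, \partial_i, X_i, X_0$ can then each be split into a horizontal part, read off as $\pi_*[\vec H, W]$ from the action on $\pi^* f$, and a vertical part, identified with the covector $\beta \mapsto [\vec H, W](h_\beta)$ through the standard vertical identification.

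For $W = \partial_0$, the horizontal part vanishes (since $\partial_0 h_i = 0$ for $i \geq 1$) and the only surviving contribution to the vertical part is the structural constant $c_{2d,\beta}^0 = \delta_{1\beta}$, yielding $\dot\partial_0 = \partial_1$. For $W = \partial_i$ ($i\geq 1$), the horizontal part is $-X_i$ (from $\partial_i$ differentiating the $h_i$ factor of $\vec H$), while the vertical coefficients are obtained by applying $\partial_i$ to the quadratic $\{H, h_\beta\} = \sum h_j c_{j\beta}^\delta h_\delta$, which produces a pair of symmetric contributions that combine into the displayed terms involving $\tau(X_{2d})$, $JX_j$, and $\nabla_\bullet X_j$. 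For $W = X_i$ and $W = X_0$, the horizontal parts on the extremal reduce to $[X_{2d}, X_i]$ and $[X_{2d}, X_0]$; rewriting these via $[X, Y] = \nabla_X Y - \nabla_Y X - \Tor(X, Y)$ together with the Tanno axioms $\nabla X_0 = 0$, $\Tor(X_0, X) = \tau(X)$, and $\Tor(X, Y) = d\omega(X, Y) X_0$ on horizontal pairs (which accounts for the Reeb term $-\delta_{i1} X_0$) yields the displayed horizontal expressions. The vertical parts for $\dot X_i$ and $\dot X_0$ involve $\nabla_i$ and $\nabla_0$ applied to $g(\tau(X_{2d}), X_{2d})$ and $g(X_{2d}, \nabla_{2d} X_j)$, arising from $X_i$ and $X_0$ hitting the structural-function coefficients inside $\{H, h_\beta\}$.

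All computations are restricted to the extremal $\lambda(t)$, where $h_i = \delta_{i,2d}$ for $i \geq 1$, and the adapted-frame identities $c_{1,\alpha}^0 = \delta_{2d,\alpha}$, $c_{2d,\alpha}^0 = \delta_{1,\alpha}$ are used to collapse the sums. The translation from structural constants to Tanno-connection data proceeds via the dictionary of Section~\ref{s:Tannocoord}: $\Gamma_{ij}^k$ for horizontal covariant derivatives, $\tfrac12(c_{k0}^i + c_{i0}^k) = g(\tau(X_i), X_k)$ for torsion, and $c_{ij}^0 = g(JX_i, X_j)$ for $J$. The principal obstacle is purely bookkeeping: many interacting structural constants, delicate sign conventions, and a careful separation of the horizontal range $\{1,\ldots,2d\}$ from the full range $\{0,\ldots,2d\}$; no conceptual idea beyond the direct use of Hamilton's formalism in the lifted frame is needed.
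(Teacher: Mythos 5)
Your proposal is correct and follows essentially the same route as the paper: both expand $[\vec{H},\cdot]$ using $\vec{H}=\sum_i h_i\vec{h}_i$ and the lifted-frame expression $\vec{h}_\alpha = X_\alpha + c_{\alpha\beta}^\delta h_\delta\partial_\beta$, evaluate along the extremal where $h_i=\delta_{i,2d}$, and convert the structural constants into Tanno-connection data ($\Gamma_{ij}^k$, $\tau$, $J$, torsion) exactly as in Section~\ref{s:Tannocoord}. Your framing via the action on $h_\beta$ and $\pi^* f$ is only a cosmetic reorganization of the paper's direct bracket computation, and your identification of the source of each term (the $-X_i$ from $\partial_i$ hitting the coefficient $h_i$, the $-\delta_{i1}X_0$ from the torsion of the pair $(X_{2d},X_i)$, the vertical terms from derivatives of the structural functions) agrees with the paper's proof.
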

\begin{proof}
The proof is a routine computation, using the properties of the adapted frame.
\begin{equation}
\dot{\partial}_0 = [\vec{H},\partial_0] = [h_i\vec{h}_i,\partial_0] = h_i [X_i + c_{i \alpha}^\beta h_\beta \partial_\alpha,\partial_0] = c_{2d,\alpha}^0 \partial_\alpha = \partial_1.
\end{equation}
For $i=1,\ldots,2d$, we have
\begin{align}
\dot{\partial}_i  = [\vec{H},\partial_i] & = [h_j\vec{h}_j,\partial_i] = -\vec{h}_i - c_{2d,\alpha}^i \partial_\alpha \\
& = - X_i - c_{i\alpha}^\beta h_\beta \partial_\alpha  - c_{2d,\alpha}^i \partial_\alpha\\
& = -X_i - c_{i0}^{2d} \partial_0 - c_{ij}^{2d} \partial_j- c_{ij}^0 h_0 \partial_j - c_{2d,0}^i \partial_0 - c_{2d,j}^i \partial_j  \\
& = -X_i - (c_{i0}^{2d} + c_{2d,0}^i)\partial_0 - c_{ij}^0 h_0 \partial_j - (c_{2d,j}^i + c_{ij}^{2d} )\partial_j  \\
& = -X_i - 2g(\tau(X_{2d}),X_i)\partial_0 - h_0 g(X_j,JX_i) \partial_j - (\Gamma_{2d,j}^i +\Gamma_{ij}^{2d})\partial_j  \\
& = -X_i - 2g(\tau(X_{2d}),X_i)\partial_0 + h_0 g(JX_j,X_i) \partial_j - g(\nabla_{2d} X_j,X_i) \partial_j - g(\nabla_i X_j,X_{2d}) \partial_j.
\end{align}
Moreover, for $\mu=0,\ldots,2d$
\begin{align}
\dot{X}_\mu  = [\vec{H},X_\mu] & = [h_i \vec{h}_i,X_\mu] = [\vec{h}_{2d}, X_\mu] \\
& = [X_{2d} + c_{2d,\alpha}^\beta h_\beta \partial_\alpha,X_\mu]  \\
& = [X_{2d},X_\mu] - X_\mu(c_{2d,\alpha}^\beta) h_\beta \partial_\alpha \\
& = [X_{2d},X_\mu] - X_\mu(c_{2d,0}^{2d}) \partial_0 - X_\mu(c_{2d,j}^{2d}) \partial_j - X_\mu(c_{2d,j}^0)h_0 \partial_j \\
& = \nabla_{2d} X_\mu - \nabla_\mu X_{2d} - \Tor(X_{2d},X_\mu) - X_\mu(g(\tau(X_{2d}),X_{2d})) \partial_0 \\
& \quad - X_\mu(g(\nabla_{2d} X_j,X_{2d})) \partial_j,
\end{align}
and we obtain the result using the properties of the torsion of Tanno connection. 
\end{proof}

\subsubsection*{Step 1: $E_a$}
The element $E_a$ is uniquely determined by the following conditions:
\begin{itemize}
\item[(i)] $\pi_* E_a = 0$,
\item[(ii)] $\pi_* \dot{E}_a = 0$,
\item[(iii)] $2H(\dot{E}_a) = 1$ (equivalent to $\sigma(\ddot{E}_a,\dot{E}_a) = 1$).
\end{itemize}
The first condition implies $E_a = v_\alpha \partial_\alpha$ for some smooth function $v_\alpha$ along the extremal. The second condition, using Lemma~\ref{l:prelcomps}, implies $E_a = v_0 \partial_0$. To apply the third condition, observe that
\begin{equation}
\dot{E}_a = \dot{v}_0 \partial_0 + v_0 \dot{\partial}_0 = \dot{v}_0 \partial_0 + v_0 \partial_1. 
\end{equation}
By standard identifications, $2H(\dot{E}_{a}) = v_0^2 \|X_1\|^2 = v_0^2$, and $v_0 = \pm 1$ (we choose the ``+'' sign).

\subsubsection*{Step 2: $E_b$}
$E_a$ directly determines $E_b$ through the structural equation:
\begin{equation}
E_b = \dot{E}_a = \dot\partial_0 = \partial_1.
\end{equation}
\subsubsection*{Step 3: $F_b$}
$E_b$ directly determines $F_b$ through the structural equation:
\begin{align}
F_b & = \dot{E}_b = -\dot{\partial}_1 \\
& = X_1 + 2g(\tau(X_{2d}),X_1)\partial_0 + h_0 g(JX_1, X_j) \partial_j - g(\nabla_{2d} X_1,X_j) \partial_j - g(\nabla_1  X_{2d},X_j) \partial_j \\
& = X_1 + 2g(\tau(X_{2d}),X_1)\partial_0 + \cancel{h_0 g(X_{2d}, X_j) \partial_j} + g(Q(X_{2d},X_{2d}) - \cancel{h_0 X_{2d}} ,X_j) \partial_j\\
& \quad  - g(\nabla_1  X_{2d},X_j) \partial_j \\
& = X_1 + 2g(\tau(X_{2d}),X_1)\partial_0 +  g(Q(X_{2d},X_{2d})-\nabla_1  X_{2d} ,X_j) \partial_j.
\end{align}
Where we used the equation $\nabla_{2d} X_{2d} = h_0 JX_{2d}$.

\subsubsection*{Intermediate step: $\dot{F}_b$}
We now compute $\dot{F}_b$. We use Leibniz's rule, Lemma~\ref{l:prelcomps} and we obtain:
\begin{align}
\dot{F}_b & =  \dot{X}_1 + 2\nabla_{2d} g(\tau(X_{2d}),X_1)\partial_0 + 2 g(\tau(X_{2d}),X_1)\dot{\partial}_0 +  \nabla_{2d} g(Q(X_{2d},X_{2d})-\nabla_1  X_{2d} ,X_j) \partial_j \\ 
& \quad + g(Q(X_{2d},X_{2d})-\nabla_1  X_{2d} ,X_j) \dot\partial_j \\
& = \nabla_{2d} X_1 - \nabla_1 X_{2d} - X_0 - \nabla_1 g(\tau(X_{2d}),X_{2d}) \partial_0 - \nabla_1 g(X_{2d},\nabla_{2d} X_j) \partial_j \\ 
& \quad + 2 \nabla_{2d} g(\tau(X_{2d}),X_1) \partial_0  + 2g(\tau(X_{2d}),X_1)\partial_1 + \nabla_{2d} g(Q(X_{2d},X_{2d})-\nabla_1 X_{2d},X_j)\partial_j \\
& \quad - Q(X_{2d},X_{2d}) + \nabla_1 X_{2d} -2g(\tau(X_{2d}),Q(X_{2d},X_{2d}) -\nabla_1 X_{2d})\partial_0 \\
& \quad + h_0 g(J X_j,Q(X_{2d},X_{2d})-\nabla_1 X_{2d})\partial_j - g(\nabla_{2d} X_j,Q(X_{2d},X_{2d}) -\nabla_1 X_{2d})\partial_j \\
& \quad - g(\nabla_{Q(X_{2d},X_{2d}) - \nabla_1 X_{2d}} X_j, X_{2d}) \partial_j \\
& = \nabla_{2d} X_1 - X_0 - Q(X_{2d},X_{2d})+  2 \nabla_{2d} g(\tau(X_{2d}),X_1)\partial_0- \nabla_1 g(\tau(X_{2d}),X_{2d})\partial_0  \\ 
& \quad -2g(\tau(X_{2d}),Q(X_{2d},X_{2d})-\nabla_1 X_{2d})\partial_0  + 2g(\tau(X_{2d}),X_1)\partial_1 - \nabla_1 g(X_{2d},\nabla_{2d} X_j) \partial_j \\ 
& \quad + \nabla_{2d} g(Q(X_{2d},X_{2d})-\nabla_1 X_{2d},X_j)\partial_j  + h_0 g(J X_j,Q(X_{2d},X_{2d})-\nabla_1 X_{2d})\partial_j  \\ 
&\quad  - g(\nabla_{2d} X_j,Q(X_{2d},X_{2d})-\nabla_1 X_{2d})\partial_j   - g(\nabla_{Q(X_{2d},X_{2d}) - \nabla_1 X_{2d}} X_j, X_{2d}) \partial_j.
\end{align}

\subsubsection*{Step 4: $E_{c_j}$}
The elements $E_{c_j}$, for $j=2,\ldots,2d$ are uniquely determined by the conditions:
\begin{itemize}
\item[(i)] They generate the skew-orthogonal complement of $\spn\{E_a,E_b,F_b,\dot{F}_b\}$,
\item[(ii)] Darboux property: $\sigma(\dot{E}_{c_i},E_{c_j}) = \delta_{ij}$,
\item[(iii)] Their derivative $F_{c_j}=-\dot{E}_{c_j}$ generate an isotropic subspace (or, equivalently, assuming the two points above, $\pi_*\ddot{E}_{c_j} = 0$).
\end{itemize}
Since $\pi_* E_{c_j} =0$ we have
\begin{equation}
E_{c_j} = \alpha_{ij}\partial_i + \beta_j \partial_0, \qquad j=2,\ldots,2d,
\end{equation}
for some smooth functions $\alpha_{ij}$ and $\beta_j$ along the extremal. Observe that $\alpha$ is a $2d\times (2d-1)$ matrix. In order to apply condition (i) we compute $\pi_*\dot{F}_b$. From the previous step we get
\begin{equation}
\pi_* \dot{F}_b =  \nabla_{2d} X_1 -X_0 - Q(X_{2d},X_{2d}) = -2Q(X_{2d},X_{2d}) +h_0 X_{2d} - X_0.
\end{equation}
Then we apply (i) and we obtain:
\begin{equation}
0 = \sigma(F_b,E_{c_j}) = -\langle E_{c_j}, \pi_* F_b\rangle = -\langle E_{c_j}, X_1\rangle = -\alpha_{1j}.
\end{equation}
Thus the first row of $\alpha$ is zero. Moreover
\begin{align}
0 = \sigma(\dot{F}_b, E_{c_j}) = -\langle E_{c_j}, \pi_* \dot{F}_b\rangle & = \langle E_{c_j},X_0 + 2Q(X_{2d},X_{2d}) - h_0 X_{2d}\rangle \\
& = \beta_j + \alpha_{ij} g(2Q(X_{2d},X_{2d})-h_0X_{2d},X_i).
\end{align}
Denoting $a_i:= g(h_0 X_{2d} - 2Q(X_{2d},X_{2d}),X_i)$, for all $i=1,\ldots,2d$ we get
\begin{equation}
E_{c_j} = \alpha_{ij} (\partial_i + a_i \partial_0), \qquad j =2,\ldots,2d.
\end{equation}
Condition (iii) gives
\begin{align}
0 = \pi_* \ddot{E}_{c_j} & = \alpha_{ij}\pi_*\left(\ddot{\partial}_i + \ddot{a}_i \partial_0 + 2\dot{a}_i \dot{\partial}_0 + a_i \ddot{\partial}_0\right) + 2\dot{\alpha}_{ij}\pi_*\left(\dot{\partial}_i +\dot{a}_i\partial_0 + a_i \dot\partial_0\right)  \\
& = \alpha_{ij}\left(-\nabla_{2d} X_i + \cancel{\nabla_i X_{2d}} + h_0 JX_i -\nabla_{2d} X_i -\cancel{\nabla_i X_{2d}} -a_i X_1\right) - 2\dot{\alpha}_{ij}X_i  \\
& = - 2\dot{\alpha}_{ij}X_i,
\end{align}
where in the last line we used we used our choice of a parallel transported frame. Then $\alpha_{ij}$ is a constant $2d \times (2d-1)$ matrix. Since the first row is zero, we can consider $\alpha$ as a $(2d-1)\times (2d-1)$ constant matrix. Condition (ii) implies that this matrix is orthogonal. Without loss of generality, we may assume that $\alpha_{ij}(0) = \delta_{ij}$. Thus
\begin{equation}
E_{c_j} = \partial_j + a_j \partial_0, \qquad j =2,\ldots,2d.
\end{equation}

\subsubsection*{Step 5: $F_{c_j}$}
For $j=2,\ldots,2d$, the element $F_{c_j}$ is obtained from $E_{c_j}$ by the structural equations:
\begin{align}
F_{c_j} & = -\dot{E}_{c_j} = -\dot{\partial}_j - \dot{a}_j \partial_0 - a_j \dot{\partial}_0  \\
& = X_j + 2g(\tau(X_{2d}),X_j)\partial_0 - h_0 g(JX_i,X_j) \partial_i + g(\nabla_{2d} X_i,X_j) \partial_i + g(\nabla_j X_i,X_{2d}) \partial_i \\
& \quad -\dot{a}_j\partial_0-a_j \partial_1  \\
& = X_j + \left(2g(\tau(X_{2d}),X_j) - \dot{a}_j\right)\partial_0 - h_0 g(JX_i,X_j) \partial_i - g(X_i,\nabla_{2d} X_j) \partial_i \\
& \quad + g(\nabla_j X_i,X_{2d}) \partial_i - a_j\partial_1  \\
& = X_j + \left(2g(\tau(X_{2d}),X_j) - \dot{a}_j\right)\partial_0 - h_0 g(JX_i,X_j) \partial_i - \frac{1}{2}g(X_i,h_0 JX_j - a_j X_1) \partial_i \\
& \quad + g(\nabla_j X_i,X_{2d}) \partial_i - a_j\partial_1  \\
& = X_j + \left(2g(\tau(X_{2d}),X_j) - \dot{a}_j\right)\partial_0 + g(\nabla_j X_i,X_{2d}) \partial_i + \frac{1}{2}h_0 g(X_i,JX_j) \partial_i   -\frac{1}{2} a_j\partial_1,
\end{align}
where we used again the parallel transported frame to eliminate $\nabla_{2d} X_j$ for $j=2,\ldots,2d$.

The computations so far prove Theorem~\ref{t:cansplitting}. In fact the canonical subspaces are defined by
\begin{equation}
S^a  := \spn\{\pi_* F_a\}, \qquad S^b  := \spn\{\pi_* F_b\}, \qquad S^c  := \spn\{\pi_* F_{c_2},\ldots,\pi_* F_{c_{2d}}\},
\end{equation}
(see Sec.~\ref{s:invariantspaces}) and from the computations above we have explicitly:
\begin{align}
X_a & := \pi_* F_a  = -\pi_* \dot{F}_b = X_0 +2Q(\tanf,\tanf)-h_0T = X_0 - a_j X_j, \\
X_b & := \pi_* F_b  = -J\tanf, \\ 
X_{c_j}& := \pi_* F_{c_j}  = X_j, \qquad j=2,\ldots,2d.
\end{align}

\subsubsection*{Step 6: $\mathfrak{R}^{bb}$, $\mathfrak{R}^{bc}$ and $\mathfrak{R}^{cc}$}

To compute the curvature we need to compute symplectic products.
\begin{lemma}\label{l:sympl2}
	Let $X_0,\ldots,X_{2d}$ an adapted frame. Then, along the extremal
\begin{align}
\sigma(X_\mu,X_\nu) & = h_0 \omega(\Tor(X_\mu,X_\nu)) - g(X_{2d},\nabla_\mu X_\nu - \nabla_\nu X_\mu- \Tor(X_\mu,X_\nu)), \\
\sigma(\partial_\mu,X_\nu) & = \delta_{\mu\nu}, \\
\sigma(\partial_\mu,\partial_\nu) & = 0 .
\end{align}
for all $\mu,\nu=0,\ldots,2d$.
\end{lemma}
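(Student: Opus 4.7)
The plan is to read off all three identities directly from the coordinate expression of the symplectic form recorded in Eq.~\eqref{eq:sympl}, namely
\begin{equation}
\sigma = \sum_{\alpha=0}^{2d} dh_\alpha\wedge \pi^* \nu_\alpha + \sum_{\alpha=0}^{2d} h_\alpha\, \pi^* d\nu_\alpha,
\end{equation}
together with the defining properties of the lifted frame. Recall that by construction $\partial_\mu(h_\alpha) = \delta_{\mu\alpha}$, $\partial_\mu(\pi^*g)=0$, while $\wt{X}_\mu(h_\alpha)=0$ and $\pi_*\wt{X}_\mu = X_\mu$, so that $dh_\alpha(\partial_\mu)=\delta_{\mu\alpha}$, $dh_\alpha(\wt{X}_\nu)=0$ and $\pi^*\nu_\alpha(\wt{X}_\mu)=\nu_\alpha(X_\mu)=\delta_{\alpha\mu}$.

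The identities $\sigma(\partial_\mu,\partial_\nu)=0$ and $\sigma(\partial_\mu,\wt X_\nu)=\delta_{\mu\nu}$ then follow immediately: in the first case the two vertical vectors annihilate both $\pi^*\nu_\alpha$ and $\pi^*d\nu_\alpha$; in the second case the first sum contributes only $dh_\mu(\partial_\mu)\pi^*\nu_\nu(\wt X_\nu)=\delta_{\mu\nu}$, while the second sum vanishes since $\pi_*\partial_\mu=0$ forces $\pi^*d\nu_\alpha(\partial_\mu,\wt X_\nu)=0$.

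The third identity is the substantive one. Since $dh_\alpha(\wt X_\mu)=0$, the first sum of $\sigma$ contributes nothing, and we obtain
\begin{equation}
\sigma(\wt X_\mu,\wt X_\nu) = \sum_\alpha h_\alpha\, d\nu_\alpha(X_\mu,X_\nu).
\end{equation}
Because $\nu_\alpha(X_\beta)=\delta_{\alpha\beta}$ is constant, the Cartan formula gives $d\nu_\alpha(X_\mu,X_\nu) = -\nu_\alpha([X_\mu,X_\nu]) = -c_{\mu\nu}^\alpha$. Evaluating along the extremal, where $h_i=\delta_{i,2d}$ for $i=1,\dots,2d$, one finds $\sigma(\wt X_\mu,\wt X_\nu) = -h_0 c_{\mu\nu}^0 - c_{\mu\nu}^{2d}$.

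It remains to rewrite this in the intrinsic form of the statement. Using the definition of torsion, $[X_\mu,X_\nu] = \nabla_\mu X_\nu - \nabla_\nu X_\mu - \Tor(X_\mu,X_\nu)$, the second term is
\begin{equation}
-c_{\mu\nu}^{2d} = -g(X_{2d},[X_\mu,X_\nu]) = -g\bigl(X_{2d},\nabla_\mu X_\nu - \nabla_\nu X_\mu - \Tor(X_\mu,X_\nu)\bigr),
\end{equation}
since $X_0,X_1,\dots,X_{2d}$ is orthonormal for the Riemannian extension of $g$. For the $h_0$-term, applying $\omega$ to the torsion identity and using $\nabla\omega=0$ together with the fact that $\omega(X_\beta)=\delta_{\beta 0}$ is constant, we see that $\omega(\nabla_\mu X_\nu)=X_\mu(\omega(X_\nu))=0$, so that $\omega(\Tor(X_\mu,X_\nu)) = -\omega([X_\mu,X_\nu]) = -c_{\mu\nu}^0$. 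Substituting, we recover exactly the expression in the lemma. The main (only) delicate point is this last step: matching the structure functions $c_{\mu\nu}^0$ and $c_{\mu\nu}^{2d}$ to the invariant quantities $h_0\,\omega(\Tor(X_\mu,X_\nu))$ and $g(X_{2d},\,\cdot\,)$ via the parallelism of $\omega$ under Tanno connection.
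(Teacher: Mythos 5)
Your proof is correct and follows essentially the same route as the paper's: both start from the coordinate expression \eqref{eq:sympl} of $\sigma$ in the lifted frame, reduce $\sigma(X_\mu,X_\nu)$ to $-\sum_\alpha h_\alpha\nu_\alpha([X_\mu,X_\nu])$, evaluate along the extremal using $h_i=\delta_{i,2d}$, and convert the bracket to covariant derivatives plus torsion, with $\nabla\omega=0$ handling the $h_0$-term. You simply spell out the two easy identities and the structure-function bookkeeping that the paper leaves implicit.
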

\begin{proof}
From Eq.~\eqref{eq:sympl}, we obtain, along the extremal
\begin{align}
\sigma(X_\mu,X_\nu) & = h_\alpha d\nu^\alpha(X_\mu,X_\nu) = - h_\alpha \nu^\alpha([X_\mu,X_\nu])  \\
& = -h_\alpha \nu^\alpha(\nabla_\mu X_\nu - \nabla_\nu X_\mu - \Tor(X_\mu,X_\nu))  \\
& = h_0 \omega(\Tor(X_\mu,X_\nu)) - g(X_{2d},\nabla_\mu X_\nu - \nabla_\nu X_\mu - \Tor(X_\mu,X_\nu)).
\end{align}
where, in the last line, we used the fact that the frame is adapted. The second and third identities follow with analogous but shorter computations starting from Eq.~\eqref{eq:sympl}.
\end{proof}

Now we can compute $\mathfrak{R}^{bb}$. It is convenient to split the computation as follows.
\begin{equation}
\mathfrak{R}^{bb} = \sigma(\dot{F}_b,F_b) = \sigma(\dot{F}_b^h,F_b^h) + \sigma(\dot{F}_b^h,F_b^v) + \sigma(\dot{F}_b^v,F_b^h).
\end{equation}
We compute the three pieces, using Lemma~\ref{l:sympl2}. We obtain, after some computations
\begin{align}
\sigma(\dot{F}_b^h,F_b^h) & = - g(\tau(X_{2d}),X_1) + h_0^2 + g(X_{2d},\nabla_{2Q(X_{2d},X_{2d})-h_0 X_{2d} + X_0} X_1) \\
& \quad + g(\nabla_1 X_{2d},2Q(X_{2d},X_{2d})), \\
\sigma(\dot{F}_b^v,F_b^h) & = 2g(\tau(X_{2d}),X_1) +g(\nabla_{2d} \nabla_1 X_1 - \nabla_1 \nabla_{2d} X_1.X_{2d}) + \|Q(X_{2d},X_{2d})\|^2 \\
& \quad - g(\nabla_{Q(X_{2d},X_{2d})-\nabla_1 X_{2d}} X_1,X_{2d}), \\
\sigma(\dot{F}_b^h,F_b^v) & = 2g(\tau(X_{2d}),X_1) + 2\|Q(X_{2d},X_{2d})\|^2 - g(\nabla_1 X_{2d}, 2Q(X_{2d},X_{2d})).
\end{align}
Taking in account that $Q(X_{2d},X_{2d}) - h_0 X_{2d} +\nabla_1 X_{2d} + X_0 = -[X_{2d},X_1]$ (easily proved using the properties of Tanno connection) we obtain immediately
\begin{equation}
\mathfrak{R}^{bb} = R(\tanf,J\tanf,J\tanf,\tanf) + 3\|Q(\tanf,\tanf)\|^2 - 3g(\tau(\tanf),J\tanf)   + h_0^2. 
\end{equation}
where we replaced the explicit tangent vector $\tanf =\dot{\gamma}_\lambda$. We proceed with $\mathfrak{R}^{cc}$. Again we split:
\begin{equation}
\mathfrak{R}^{cc}_{ij} = \sigma(\dot{F}_{c_i}, F_{c_j}) = \sigma(\dot{F}_{c_i}^h,F_{c_j}^h)+\sigma(\dot{F}_{c_i}^v,F_{c_j}^h)+\sigma(\dot{F}_{c_i}^h,F_{c_j}^v) = \sigma(\dot{F}_{c_i}^v,F_{c_j}^h),
\end{equation}
where we used the fact that, by construction, $\dot{F}_{c_i}$ is vertical hence $\dot{F}_{c_i}^h =0$. We use Leibniz's rule, Lemma~\ref{l:prelcomps}, Eq.~\eqref{eq:partrans2} for the parallel transported frame and the identity $[X_{2d},X_i] = \nabla_{2d} X_i - \nabla_i X_{2d}$, valid for $i=2,\ldots,2d$. We obtain, after some computations
\begin{align}
\sigma(\dot{F}_{c_i}^v,F_{c_j}^h) & =  R(X_{2d},X_i,X_j,X_{2d})+ \frac{1}{2}g(\tau(X_{2d}),X_{2d})g(X_j,JX_i) \\
& \quad +  \frac{1}{2}h_0 g(X_j,Q(X_i,X_{2d}))- h_0 g(X_j,Q(X_{2d},X_i)) + \frac{1}{4}h_0^2g(X_i,X_j) - \frac{1}{4}a_ia_j.
\end{align}
for $i,j=2,\ldots,2d$. This expression is symmetric w.r.t. to $i$ and $j$ as a consequence of Lemma~\ref{l:propcurv}. Restoring the tangent vector $\tanf = \dot{\gamma}_\lambda$ and the functions $a_j$ we get, for $i,j=2,\ldots,2d$
\begin{align}
\mathfrak{R}^{cc}_{ij} & =  \mathcal{S}[R(\tanf,X_{i},X_{j},\tanf)]+h_0 \mathcal{S}[g(\tanf,Q(X_{j},X_{i}))] +\frac{1}{4}h_0^2 g(X_i,X_j) \\
& \quad - \frac{1}{4} g(X_i,h_0\tanf -2Q(\tanf,\tanf))g(X_j,h_0\tanf -2Q(\tanf,\tanf)).
\end{align}
The trace over the subspace $S^c= \spn\{X_2,\ldots,X_{2d}\}$ is
\begin{align}
\mathfrak{Ric}^c = \sum_{i=2}^{2d} \mathfrak{R}^{cc}_{ij} & = \mathrm{Ric}(\tanf) - R(\tanf,J\tanf,J\tanf,\tanf) + h_0 g(\tanf,\trace Q) + \frac{1}{4}h_0^2(2d-2) - \|Q(\tanf,\tanf)\|^2 \\
& = \mathrm{Ric}(\tanf) - R(\tanf,J\tanf,J\tanf,\tanf) + \frac{1}{4}h_0^2(2d-2) - \|Q(\tanf,\tanf)\|^2.
\end{align}
where we used property c) and i) of Tanno's tensor. 

Now we compute $\mathfrak{R}^{cb} :S^c \to S^b$ (this is a $(2d-1)\times 1$ matrix). As usual we split
\begin{equation}
\mathfrak{R}^{cb}_j = \sigma(\dot{F}_{c_j},F_b) = \sigma(\dot{F}^v_{c_j},F^h_b).
\end{equation}
We use Leibniz's rule, Lemma~\ref{l:prelcomps}, Eq.~\eqref{eq:partrans2} for the parallel transported frame and the identity $[X_{2d},X_i] = \nabla_{2d} X_i - \nabla_i X_{2d}$, valid for $i=2,\ldots,2d$. Moreover, we use also properties b), d) and h) of Tanno's tensor. After some computations, and restoring the notation $\tanf= \dot\gamma$ we get
\begin{align}
\mathfrak{R}^{cb}_j & = -R(\tanf,X_j,J\tanf,T) + 2g(\tau(\tanf),X_j) - 2g(\tau(\tanf),\tanf)g(\tanf,X_j) + 3g((\nabla_\tanf Q)(\tanf,\tanf),X_j) \\
& \quad  +8h_0g(Q(\tanf,\tanf),JX_j).
\end{align}
By construction, $\mathfrak{R}^{bc} = (\mathfrak{R}^{cb})^*$. We can write, in a more symmetric fashion using Eq.~\eqref{eq:id1}:
\begin{align}
\mathfrak{R}^{bc}_j & = -R(\tanf,J\tanf,X_j,\tanf) +  g(\tau(\tanf),X_j) - g(\tau(\tanf),\tanf)g(\tanf,X_j)  + 3g((\nabla_\tanf Q)(\tanf,\tanf),X_j)\\ & \quad +8h_0g(Q(\tanf,\tanf),JX_j).
\end{align}

\subsubsection*{Step 7: $F_a$}
The structural equations give:
\begin{equation}
F_a  = -\dot{F}_b + \mathfrak{R}^{bb} E_b + \sum_{j=2}^{2d} \mathfrak{R}^{bc}_j E_{c_j}.
\end{equation}
After some computations, we obtain
\begin{equation}
F_a = X_0 - a_i X_i + \phi_i \partial_i + \phi_0 \partial_0,
\end{equation}
where
\begin{align}
a_i X_i & =  h_0 X_{2d} - 2Q(X_{2d},X_{2d}), \\
\phi_i X_i & =  -\nabla_{X_0+2Q(X_{2d},X_{2d})} X_{2d} + \tau(X_{2d}) - g(\tau(X_{2d}),X_{2d}) X_{2d} \\
&\quad  - 4h_0 JQ(X_{2d},X_{2d}) + 2(\nabla_{2d}Q)(X_{2d},X_{2d}) - 3\|Q(X_{2d},X_{2d})\|^2 JX_{2d}, \\
\phi_0 & =  2 g((\nabla_{2d} \tau)(X_{2d}),JX_{2d}) - 4h_0 g(\tau(X_{2d}),X_{2d}) + g((\nabla_1\tau)(X_{2d}),X_{2d})  \\ 
&\quad + 2R(X_{2d},JX_{2d},Q(X_{2d},X_{2d}),X_{2d}) + 2g(\tau(X_{2d}),Q(X_{2d},X_{2d})) \\
& \quad -3\nabla_{2d}\|Q(X_{2d},X_{2d})\|^2. 
\end{align}
The vector $\phi:=\phi_i X_i$ is orthogonal to $X_{2d}$ (or, equivalently, $\phi_{2d} = 0$). Clearly $\phi_0$ is a well defined, smooth function along the geodesic. In particular, it makes sense to take the derivative of $\phi_0$ in the direction of the geodesic or, in terms of our adapted frame, $\nabla_{2d} \phi_0$.

To complete the computation, we assume $Q=0$.

\subsubsection*{Step 8: $\mathfrak{R}^{aa}$ (with $Q=0$)}

Many simplification occur. In particular
\begin{align}
a_i X_i  & =  h_0 X_{2d}, \\
\phi_i X_i  & =  -\nabla_{0} X_{2d} + \tau(X_{2d}) - g(\tau(X_{2d}),X_{2d}) X_{2d} , \\
\phi_0 & =  2 g((\nabla_{2d} \tau)(X_{2d}),JX_{2d}) - 4h_0 g(\tau(X_{2d}),X_{2d}) + g((\nabla_1\tau)(X_{2d}),X_{2d}).
\end{align}
As usual, we split
\begin{equation}
\mathfrak{R}^{aa} = \sigma(\dot{F}_a,F_a) = \sigma(\dot{F}_a^v,F_a^h) = \langle \dot{F}_a^v, F_a^h\rangle = \langle \dot{F}_a^v, X_0 - h_0 X_{2d}\rangle = \dot{F}_a^v(h_0) - h_0 \dot{F}_a^v(h_{2d}),
\end{equation}
where we used the fact that $\dot{F}_a$ is vertical by construction. By Lemma~\ref{l:prelcomps}, we obtain
\begin{equation}
\mathfrak{R}^{aa} = g((\nabla_{h_0\tanf-X_0} \tau)(\tanf),\tanf) + 2h_0^2 g(\tau(\tanf),J\tanf) - 2\|\tau(\tanf)\|^2 + 2g(\tau(\tanf),\tanf)^2 + \nabla_\tanf \phi_0.
\end{equation}
It is not convenient to explicitly compute the derivative $\nabla_{2d} \phi_0$ since no simplifications occur.
\begin{remark}
Since $\pi_* F_a = X_0 - h_0 \tanf$, one would expect a term of the form $R(\tanf,X_0-h_0\tanf,X_0-h_0\tanf,\tanf)$ in the expression for $\mathfrak{R}(X_a,X_a)$. However, using the symmetries of the Riemann tensor (with torsion) and the fact that $\nabla X_0 = 0$, we obtain that $R(\tanf,X_0-h_0\tanf,X_0-h_0\tanf,\tanf) = 0$.
\end{remark}

\subsubsection*{Step 9: $\mathfrak{R}^{ac}$ (with $Q=0$)}

The term $\mathfrak{R}^{ac} : S^a \to S^c$ is a $1\times (2d-1)$ matrix. For $j=2,\ldots,2d$
\begin{equation}
\mathfrak{R}^{ac}_j = \sigma(\dot{F}_a,F_{c_j}) = \sigma(\dot{F}_a^v,F_{c_j}^h) = \langle \dot{F}_a^v, X_j\rangle.
\end{equation}
With long computations using Leibniz's rule, the identity $[X_{2d},X_0] = -\nabla_0 X_{2d} + \tau(X_{2d})$ and Lemma~\ref{l:prelcomps}, we obtain for $j=2,	\ldots,2d-1$
\begin{equation}
\mathfrak{R}^{ac}_j=R(\tanf, X_0, X_j, \tanf)+ g((\nabla_\tanf \tau)(\tanf),X_j)	+ 2h_0g(\tau(J\tanf),X_j),
\end{equation}
and $\mathfrak{R}^{ac}_{2d} = 0$.

\appendix
\section{On the Yang-Mills condition} \label{s:app}
We give here an equivalent characterization of the Yang-Mills condition  for contact manifolds.

\begin{proposition}
The Yang-Mills condition \eqref{eq:YMdef}
is equivalent to
\begin{equation}
\sum_{i=1}^{2d}(\nabla_{X_{i}}\tau)(X_{i})=0.
\end{equation}
for every orthonormal frame $X_{1},\ldots,X_{2d}$ of $\distr$.
\end{proposition}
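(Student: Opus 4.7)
The plan is to exploit tensoriality in $Y$ and evaluate the Yang-Mills condition~\eqref{eq:YMdef} separately on vertical and horizontal directions, using the explicit structure of the torsion of Tanno connection given by properties (iv)-(v).

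First I would test the condition on $Y = X_0$. Since $T(X_i, X_0) = -T(X_0, X_i) = -\tau(X_i)$ and $\nabla X_0 = 0$, one computes
\begin{equation}
(\nabla_{X_i} T)(X_i, X_0) = \nabla_{X_i}(T(X_i,X_0)) - T(\nabla_{X_i}X_i, X_0) - T(X_i, \nabla_{X_i} X_0) = -(\nabla_{X_i}\tau)(X_i),
\end{equation}
so $\sum_i (\nabla_{X_i}T)(X_i,X_0) = -\sum_i (\nabla_{X_i}\tau)(X_i)$. Thus Yang-Mills in direction $X_0$ is precisely the stated condition.

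Next I would test the condition on a horizontal direction $Y = X_j$. Since $T(X_i, X_j) = d\omega(X_i,X_j)X_0 = g(X_i, JX_j) X_0$ for horizontal vectors, and since Tanno connection preserves both $\distr$ (it satisfies $\nabla \omega=0$ and $\nabla X_0=0$, whence horizontal vectors remain horizontal under $\nabla$) and $X_0$, a direct expansion yields
\begin{equation}
(\nabla_{X_i} T)(X_i, X_j) = g(X_i, (\nabla_{X_i}J)X_j)\,X_0 = g(X_i, Q(X_j, X_i))\,X_0.
\end{equation}
Summing over $i$ and applying property d) of Lemma~\ref{lem:identities} to swap the first two arguments of $Q$,
\begin{equation}
\sum_{i=1}^{2d} g(X_i, Q(X_j, X_i)) = -g\Bigl(X_j, \sum_{i=1}^{2d} Q(X_i, X_i)\Bigr) = -g(X_j, \trace Q) = 0,
\end{equation}
where the last equality is property i) of Lemma~\ref{lem:identities}. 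Therefore the Yang-Mills condition along any horizontal direction is automatically satisfied, regardless of $\tau$.

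Combining the two cases: since every $Y \in \Gamma(TM)$ decomposes uniquely as a horizontal vector plus a multiple of $X_0$, and the expression $\sum_i(\nabla_{X_i}T)(X_i,Y)$ is $C^\infty(M)$-linear in $Y$, the Yang-Mills condition~\eqref{eq:YMdef} is equivalent to its restriction to $Y = X_0$, which in turn is equivalent to $\sum_{i=1}^{2d}(\nabla_{X_i}\tau)(X_i) = 0$. The only non-obvious step is identifying the horizontal contribution with $\trace Q$; once the skew-symmetry of $Q$ in its first two arguments is invoked, the vanishing is immediate from the already established identity $\trace Q = 0$.
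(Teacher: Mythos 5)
Your proof is correct and takes essentially the same route as the paper: both expand $(\nabla_{X_i}T)(X_i,Y)$ using the torsion identities (iv)--(v) of Tanno connection, split $Y$ into its horizontal part and its $X_0$-component, identify the $X_0$-direction contribution with $-\sum_i(\nabla_{X_i}\tau)(X_i)$, and kill the horizontal contribution via the skew-symmetry of $Q$ in its first two arguments together with $\trace Q=0$ from Lemma~\ref{lem:identities}. The only cosmetic difference is that the paper packages the computation into the single identity $(\nabla_X T)(X,Y)=-g(Q(X,X),Y)X_0-\omega(Y)(\nabla_X\tau)(X)$ before tracing, whereas you test the traced expression on $Y=X_0$ and $Y=X_j$ separately.
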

\begin{proof}
Let $X \in \Gamma(\distr)$. For $Y \in \Gamma(\distr)$, using the properties of Tanno connection
\begin{align}
(\nabla_X T)(X,Y) & = X(T(X,Y))-T(\nabla_X X,Y) - T(X,\nabla_X Y) \\
& =X(d\omega(X,Y)X_0) - d\omega(\nabla_X X,Y)X_0 - d\omega(X,\nabla_X Y)X_0 \\
& = [X(g(X,JY)) - g(\nabla_X X,JY) - g(X,J\nabla_X Y)]X_0 \\
& = [-X(g(JX,Y)) + g(J\nabla_X X,Y) + g(JX,\nabla_X Y)]X_0 \\
& = - g((\nabla_X J)X,Y)X_0 = - g(Q(X,X),Y)X_0.
\end{align}
On the other hand, for $Y= X_0$ we get
\begin{align}
(\nabla_X T)(X,X_0) & = X(T(X,X_0))-T(\nabla_X X,X_0) - T(X,\nabla_X X_0) \\
& = - X(\tau(X)) +\tau(\nabla_X X) = -(\nabla_X \tau)(X).
\end{align}
Then, for any $X \in \Gamma(\distr)$ and $Y \in \Gamma(TM)$, we have the following identity
\begin{equation}
(\nabla_X T)(X,Y) = -g(Q(X,X),Y)X_0 - \omega(Y)(\nabla_X \tau)(X).
\end{equation}
Taking the trace, and using that $\trace Q=0$ by item i) of Lemma \ref{lem:identities}, we get the result.
\end{proof}

\bibliographystyle{abbrv}
\bibliography{biblio/Contact-Biblio}

\end{document}